\theoremstyle{plain}
\newtheorem{theo}{Theorem}[section]
\newtheorem{lem}[theo]{Lemma}
\newtheorem{cor}[theo]{Corollary}
\newtheorem{prop}[theo]{Proposition}
\theoremstyle{definition}
\newtheorem{defi}[theo]{Definition}
\newtheorem{rem}[theo]{Remark}
\theoremstyle{plain}
\theoremstyle{definition}
\theoremstyle{plain}
\newtheorem{theorem}{Theorem}
\title[Rationality of the Weil representation and the theta correspondence]{On the rationality of the Weil Representation and the local theta correspondence}
\author{Justin Trias}
\date{}
\begin{document}

\maketitle

\begin{abstract} We prove that the Weil representation over a non-archimedean local field can be realised with coefficients in a number field. We give an explicit descent argument to describe precisely which number field the Weil representation descends to. Our methods also apply over more general coefficient fields, such as $\ell$-modular coefficient fields, as well as coefficient rings such as rings of integers \textit{i.e.} in families. We also prove that the theta correspondence over a perfect field is valid if and only if it is valid over the algebraic closure of this perfect field. These two results together show that the classical local theta correspondence is rational. \end{abstract}

\tableofcontents

\section*{Introduction}

In Remark 2 of his note \textit{A Brief Survey on the Theta Correspondence}, Dipendra Prasad expresses the following expectation
\begin{quote}
\textit{It will be interesting to construct a model of the Weil representation which is defined over a number field. Since all the known models require the additive character $\psi$ in an essential way, it does not seem obvious if it can be done at all.}
\end{quote}
And he continues
\begin{quote}
\textit{We note that the Weil representation of $\textup{SL}(2)$ can be defined over a number field because it is sum of its even and odd pieces, both of which are defined over number fields: the even piece because it occurs in an explicit principal series, and the odd piece because it is induced from compact open subgroup.}
\end{quote}
One of the main goals of this paper is to address this question and define the Weil representation over a number field. This is achieved in the second part of the manuscript. To do so, we perform an explicit Galois descent on the Weil representation to obtain a model defined over a number field. We work in the finite case and in the non-archimedean local case \textit{i.e} over a field $F$ of characteristic not $2$ that is either local non-archimedean of residual cardinality $q = p^f$ or finite with cardinality $q=p^f$. Because our methods are explicit, we describe the minimal number fields over which the Weil representation can be realised in the following table. We let $p^*$ be $-p$ if $p \equiv 3 [4]$ and $p$ if $p \equiv 1 [4]$. We denote by $\omega^+$ the even part and by $\omega^-$ the odd part of the Weil representation. Here is a summary of the results we obtain in Section \ref{sec:descent-when-p-not-2}. 

\begin{theorem}[Character fields and realisation fields when $p \neq 2$] \ \\
\begin{center} \begin{tabular}{|c|c|c|}
\hline
\textup{Weil rep.} & $\omega^+$ & $\omega^-$ \\
\hline
\multirow{2}{*}{\textup{char. field}} & \multicolumn{2}{c|}{$\mathbb{Q}$ \textup{if} $q \in p^{2\mathbb{N} \phantom{e^2}}$} \\
 & \multicolumn{2}{c|}{$\mathbb{Q}[\sqrt{p^*}]$ \textup{if} $q \in p^{2\mathbb{N}+1}$} \\
\hline
\multirow{2}{*}{\textup{real. field}} & $\mathbb{Q}$ \textup{if} $q \in p^{2\mathbb{N} \phantom{e^2}}$ & $\mathbb{Q}[\sqrt{-p}]$ \textup{if} $q \in p^{2\mathbb{N}}$ \\
 & \hspace{0.1cm} $\mathbb{Q}[\sqrt{p^*}]$ \textup{if} $q \in p^{2\mathbb{N}+1}$ \hspace{0.1cm} & $\mathbb{Q}[\sqrt{p^*}][\sqrt{-p}]$ \textup{if} $q \in p^{2 \mathbb{N}+1}$ \\
\hline
\end{tabular} \end{center}
\end{theorem}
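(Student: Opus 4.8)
The plan is to run an explicit Galois descent on the Weil representation $\omega=\omega_\psi$, isolating the (purely group-theoretic) computation of the character fields from the (Brauer-theoretic) computation of the realisation fields. First I would fix a non-trivial additive character $\psi$ of $F$ and record that for $\sigma\in\mathrm{Gal}(\overline{\mathbb{Q}}/\mathbb{Q})$ one has $\sigma\circ\psi=\psi_{c(\sigma)}$, where $\psi_a(x):=\psi(ax)$ and $c$ is the cyclotomic character, valued in $\mathbb{Z}_p^\times$ regarded inside $F^\times$. Combining this with the standard facts (available from the earlier sections) that $\sigma(\omega_\psi)\cong\omega_{\psi_{c(\sigma)}}$ and that $\omega_{\psi_a}\cong\omega_\psi$ — and likewise $\omega^{\pm}_{\psi_a}\cong\omega^{\pm}_\psi$ — if and only if $a\in(F^\times)^2$, the stabiliser of the isomorphism class of $\omega$, of $\omega^+$ and of $\omega^-$ is one and the same open subgroup $G_0:=\{\sigma:c(\sigma)\in(F^\times)^2\}$, independent of the square class of $\psi$.

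\textbf{Character fields.} Next I would compute $G_0$ arithmetically. Since $p$ is odd, a unit $u\in\mathbb{Z}_p^\times$ lies in $(F^\times)^2$ exactly when its reduction lies in $(\mathbb{F}_q^\times)^2$ (Hensel), and writing $(q-1)/2=(p-1)(1+p+\cdots+p^{f-1})/2$ shows that $\mathbb{F}_p^\times\subseteq(\mathbb{F}_q^\times)^2$ precisely when $f$ is even, while for $f$ odd membership is detected by the Legendre symbol $\left(\tfrac{\cdot}{p}\right)$. Hence $G_0=\mathrm{Gal}(\overline{\mathbb{Q}}/\mathbb{Q})$ for $q\in p^{2\mathbb{N}}$, whereas for $q\in p^{2\mathbb{N}+1}$ the quotient $\mathrm{Gal}(\overline{\mathbb{Q}}/\mathbb{Q})/G_0$ is the quadratic character cutting out the unique quadratic subfield $\mathbb{Q}[\sqrt{p^*}]$ of $\mathbb{Q}(\mu_p)$. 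By linear independence of characters (in the smooth case, of distribution characters), $\mathrm{Fix}(G_0)$ is the common character field of $\omega$, $\omega^+$ and $\omega^-$, i.e. the middle row of the table. In the finite case this is also visible directly: the only irrational entries of the character table of $\omega$ are $\mathbb{Q}$-multiples of the Gauss sum $g(\psi)=\sum_x\psi(x^2)$, and Gauss's sign law together with the Hasse--Davenport relation place $g(\psi)$ in $\mathbb{Q}$ for $f$ even and in $\mathbb{Q}[\sqrt{p^*}]$ — with equality, since $g(\psi)^2=\pm q$ — for $f$ odd.

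\textbf{Realisation fields.} Writing $E_0$ for the character field, the descent datum on $G_0$ assembles into an obstruction class $\mathrm{ob}(\omega^{\pm})\in H^2(G_0,\overline{\mathbb{Q}}^\times)=\mathrm{Br}(E_0)$, the class of the Schur algebra, and $\omega^{\pm}$ descends to $E\supseteq E_0$ iff $\mathrm{ob}(\omega^{\pm})$ dies in $\mathrm{Br}(E)$. For $\omega^+$ I would show $\mathrm{ob}(\omega^+)=0$ by exhibiting an explicit $E_0$-rational model: following Prasad's remark, $\omega^+$ is realised inside a principal series attached to a quadratic character (when that character is even) or inside an explicit cuspidal-type construction otherwise, and in all cases the only arithmetic input is a Gauss sum/$\varepsilon$-factor already lying in $E_0$ — so $\omega^+$ is defined over $E_0$, which is the top realisation row (the $p$-adic case uses the analogous $\mathbb{Q}$-rational quadratic-character principal series of $\mathrm{SL}_2(F)$). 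For $\omega^-$: when $q\equiv 3\bmod 4$ (equivalently $p\equiv 3\bmod 4$ and $f$ odd) the representation is not self-dual, a direct inspection shows the descent cocycle is a coboundary, and $E_0=\mathbb{Q}[\sqrt{p^*}]=\mathbb{Q}[\sqrt{-p}]$; when $q\equiv 1\bmod 4$ the representation $\omega^-$ is self-dual of symplectic type, so $\mathrm{ob}(\omega^-)$ is a non-trivial quaternion class with local invariant $1/2$ at a real place, whose remaining invariants must be pinned down. Reducing the infinite $2$-cocycle $A_\sigma\,{}^{\sigma}\!A_\tau\,A_{\sigma\tau}^{-1}$ to a finite cyclic one — the decisive intertwiner being the Weyl-element operator, whose square is a root of unity built from $g(\psi)$ and from $-1$ — should identify $\mathrm{ob}(\omega^-)$ as the class of a quaternion $E_0$-algebra split precisely by $E_0[\sqrt{-p}]$, yielding the realisation fields $\mathbb{Q}[\sqrt{-p}]$ ($f$ even) and $\mathbb{Q}[\sqrt{p^*}][\sqrt{-p}]$ ($f$ odd). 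The $p$-adic odd case then follows from the finite one by realising $\omega^-$ as a compact induction from the residual odd Weil representation inflated to a parahoric — exactly the ``induced from a compact open subgroup'' of Prasad's note — so its realisation field equals that of the finite $\omega^-$.

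\textbf{Expected main obstacle.} The hard part is the realisation-field computation for $\omega^-$ in the self-dual cases: descending the $2$-cocycle from the full cyclotomic Galois group to a finite cyclic cocycle and identifying the resulting cyclic/quaternion algebra, which runs through the fine arithmetic of quadratic Gauss sums (Gauss's sign, Hasse--Davenport) and of Hilbert symbols at $2$, $p$ and $\infty$, handled case by case in $p\bmod 4$ and $f\bmod 2$; and then showing the algebra is genuinely non-split over $E_0$, so that the Schur index really is $2$ and $\sqrt{-p}$ cannot be avoided. A secondary difficulty is to give the Schur index a meaning for the infinite-dimensional $p$-adic $\omega^-$, which the compact-induction reduction to the finite Weil representation is meant to sidestep.
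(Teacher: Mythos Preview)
Your computation of the character fields is essentially the paper's own: identify the Galois stabiliser of $\omega_\psi^{\pm}$ with $G\cap\mathcal{O}_F^{\times 2}$ via $\psi^\sigma=\psi^{c(\sigma)}$ and $\omega_{\psi^a}\simeq\omega_\psi\Leftrightarrow a\in F^{\times 2}$, then read off $\mathbb{Q}$ or $\mathbb{Q}[\sqrt{p^*}]$ according to the parity of $f$. This part matches.

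For the realisation fields your route is genuinely different. The paper does \emph{not} exhibit external models (principal series, compact induction) at all: it builds an explicit semilinear action directly on the Schr\"odinger model via
\[
r_\sigma\cdot f \;=\; \sigma\bigl(\omega_{\psi,X}(m_\gamma,1)\,f\bigr),\qquad \gamma^2=\sigma,
\]
using the identity $\omega_{\psi^{\gamma^2},X}=\omega_{\psi,X}^{m_\gamma}$. For $\omega^+$ the sign ambiguity in $\gamma$ is harmless because $\omega^+(-\mathrm{id}_W,1)=\mathrm{id}$ and the Hilbert symbol is trivial on $\mathcal{O}_F^\times\times\mathcal{O}_F^\times$ for $p$ odd; this gives the descent to $E_0$ in one stroke, uniformly in the rank $m$ and in the finite/$p$-adic dichotomy. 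For $\omega^-$ one has $r_\tau^2=\omega^-(-\mathrm{id}_W,1)=-\mathrm{id}$, and the paper reads off the endomorphism algebra of $\omega^-|_{E_0}$ as the twisted group algebra $\mathcal{L}'[X_\tau]/(X_\tau^{2^{k_a}}+1)$; it then shows this is $\mathcal{M}_{2^{k_a-1}}(D)$ by (i) a norm obstruction $N_{\mathcal{L}/\mathcal{L}_0}(\lambda)=-1$ over the CM-field $\mathcal{L}$ proving non-splitness, (ii) the duality $\omega_{\psi^{-1}}^-\simeq(\omega_\psi^-)^\vee$ giving $A\simeq A^{\mathrm{op}}$ hence order $\le 2$ in $\mathrm{Br}(E_0)$, and (iii) local class field theory at $v\nmid p\infty$ solving the norm equation there. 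No reduction to the finite field case, no Gauss-sum bookkeeping, no Frobenius--Schur indicators.

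Your approach is workable but two points deserve care. First, Prasad's ``$\omega^+$ sits in a principal series'' is stated for $\mathrm{SL}_2$; for general $\mathrm{Sp}(W)$ you need the degenerate Siegel principal series, and you must check its inducing data live over $E_0$ --- the paper's $r_\sigma$ argument sidesteps this entirely and is manifestly rank-independent. Second, your reduction of the $p$-adic $\omega^-$ to the finite case by compact induction from a parahoric is correct but introduces an extra layer (and an implicit choice of lattice model); the paper's argument treats finite and $p$-adic uniformly on the Schr\"odinger model. On the other hand, your identification of the non-trivial invariant at $\infty$ via ``self-dual symplectic'' is cleaner than the paper's norm-obstruction lemma, and both routes land on the same quaternion class split by $E_0[\sqrt{-p}]$.
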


\vspace{0.2cm}

\noindent We note that these fields do not depend on the size of the symplectic group on which the Weil representation is built and only depend on $q$. We also obtain explicit results when $p=2$ but there are more subcases which not only depend on $q$ but also on the field $F$ itself, so we do not explain them here and rather refer to Theorems \ref{thm:descent-even-part-p-is-2} and \ref{thm:descent-odd-part-p-is-2}. The character and realisation fields of the Weil representation are obtained as the composite of the character and realisation fields of the even and odd parts.

When our paper was finished, Dipendra Prasad kindly informed us of the existence of two papers unknown to the author on the rationality of the Weil representation, one in the finite case \cite[Sec 13]{gross} and another one in the non-archimedean case \cite{cliff_mc_neilly}. They both assume $p$ is odd. It should be noted that they do not perform an explicit descent, as we do in this paper. Therefore they can't obtain an explicit model of the Weil representation over a number field, but this can be extracted from our Galois descent data by Theorem \ref{thm:weil-representation-descent-p-not-2} and the explicit obstruction norm problem in Lemma \ref{lem:descent-impossible-along-mathcal-L}. Moreover \cite{cliff_mc_neilly} does not describe the Schur index in all cases. Our explicit descent relies on an interpretation of the Weil representation in \cite{trias_modular_weil} which is different from \cite{cliff_mc_neilly} and is simpler to manipulate than the classical Schr\"odinger model they use, which is realised over $\mathbb{Q}(\psi,\sqrt{-1})$, where $\mathbb{Q}(\psi)$ the character field of a non-trivial smooth character $\psi : F \to \mathbb{C}^\times$, whereas the model we use is directly realised over $\mathbb{Q}(\psi)$.

We now expose the main results of the first part of the manuscript, where we study some generalities about rationality in the representation theory of locally profinite groups and prove a result about the rationality of the largest isotypic quotient \textit{i.e.} a compatibility between the largest istoypic quotients over a perfect field $R$ and over its algebraic closure. This applies in particular to theta lifts, which are obtained as largest isotypic quotients. We then define the local theta correspondence over $R$ in Section \ref{sec:isotypic_lifts_and_rationality} as a set of statements about finiteness, irreducibilty and uniqueness of the theta lifts. We prove in Theorem \ref{thm:theta_corresp_over_R_and_bar_R} that the local theta correspondence is valid over $R$ if and only if it is valid over its algebraic closure. The other main result we obtain is a compatibility of the theta lifts with Galois action in a sense we now explain. Let $\psi : F \to R^\times$ be a non-trivial smooth character and let $\omega_\psi$ be the Weil representation. For $a \in F^\times$, let $\psi^a : t \in F \mapsto \psi(at) \in R^\times$, which is a non-trivial character. We denote by $[\psi]$ the orbit of the character $\psi$ under the action of $F^{\times 2}$. Then $\omega_\psi \simeq \omega_{\psi'}$ if and only if $\psi' \in [\psi]$. We denote by $\Theta_{[\psi]}$ the theta lift to insist on the dependence in the character $\psi$. Let $H_1$ and $H_2$ be a reductive dual pair in a symplectic group, or its lifts to the metaplectic group to be more rigorous. Here is the compatibility we obtain at the end of the first part:

\begin{theorem}[Theorem \ref{thm:theta_lifts_and_galois_action}] Let $R = \overline{\mathbb{Q}}$ and let $\sigma \in \textup{Gal}(\overline{\mathbb{Q}}/\mathbb{Q})$. Then $\Theta_{[\psi]}$ is equivariant for the action of the Galois group in the sense that, for all $\pi_1 \in \textup{Irr}_{\overline{\mathbb{Q}}}(H_1)$, we have
$${}^\sigma \Theta_{[\psi]}(\pi_1) \simeq \Theta_{[\psi^\sigma]}({}^\sigma \pi_1).$$
\end{theorem}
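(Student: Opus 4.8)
The plan is to reduce the statement to two ingredients: that the Galois twist $\Pi \mapsto {}^\sigma\Pi$ commutes with the formation of the largest isotypic quotient, and that the Weil representation itself transforms by ${}^\sigma\omega_\psi \simeq \omega_{\psi^\sigma}$. Granting these, the theorem is immediate: realising $\Theta_{[\psi]}(\pi_1)$ as (the $\widetilde{H}_2$-component of) the largest $\pi_1$-isotypic quotient of $\omega_\psi$ restricted to the metaplectic lifts $\widetilde{H}_1\times\widetilde{H}_2$, one applies ${}^\sigma$ to obtain the largest ${}^\sigma\pi_1$-isotypic quotient of ${}^\sigma\omega_\psi\simeq\omega_{\psi^\sigma}$, which is by definition $\Theta_{[\psi^\sigma]}({}^\sigma\pi_1)$.

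First I would record the categorical input. For every locally profinite group $G$ the functor $\Pi \mapsto {}^\sigma\Pi$ on $\textup{Rep}_{\overline{\mathbb{Q}}}(G)$ is an exact $\mathbb{Z}$-linear auto-equivalence — it is base change along the field automorphism $\sigma$ of $\overline{\mathbb{Q}}$ — it commutes with restriction along continuous homomorphisms and with external tensor products, and it carries $\textup{Irr}_{\overline{\mathbb{Q}}}(G)$ to itself via $\pi\mapsto{}^\sigma\pi$. In particular a quotient of a representation of $\widetilde{H}_1\times\widetilde{H}_2$ is $\pi_1$-isotypic over $\widetilde{H}_1$ if and only if its $\sigma$-twist is ${}^\sigma\pi_1$-isotypic over $\widetilde{H}_1$, and since the largest such quotient is characterised by a universal property among quotients, ${}^\sigma$ sends the largest $\pi_1$-isotypic quotient of $\Pi$ to the largest ${}^\sigma\pi_1$-isotypic quotient of ${}^\sigma\Pi$, with matching $\widetilde{H}_2$-actions. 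This is exactly the "rationality of the largest isotypic quotient" packaging developed earlier in the section, now applied to an automorphism of the coefficient field rather than to a base change; it yields ${}^\sigma\Theta_{[\psi]}(\pi_1)\simeq\Theta'({}^\sigma\pi_1)$, where $\Theta'$ denotes the lift attached to the representation ${}^\sigma\omega_\psi$.

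Next I would identify ${}^\sigma\omega_\psi$. The cleanest route uses the Heisenberg group $\mathcal{H}=\mathcal{H}(W)$ with centre identified with $F$: by Stone--von Neumann, for each non-trivial smooth character $\chi:F\to\overline{\mathbb{Q}}^\times$ there is a unique smooth irreducible representation $\rho_\chi$ of $\mathcal{H}$ with central character $\chi$, and applying ${}^\sigma$ to $\rho_\psi$ produces a smooth irreducible representation with central character $\sigma\circ\psi=\psi^\sigma$, whence ${}^\sigma\rho_\psi\simeq\rho_{\psi^\sigma}$. The metaplectic group acts on $\mathcal{H}$ fixing its centre, and $\omega_\psi$ is pinned down, together with its splitting $\omega_\psi\simeq\omega_\psi^+\oplus\omega_\psi^-$ (which is $\sigma$-stable, being cut out by a central involution independent of $\psi$), as the genuine metaplectic representation intertwining the Heisenberg data of $\rho_\psi$ in the Weil sense. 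Since ${}^\sigma$ preserves genuineness, preserves that involution, and transports the Heisenberg data from $\psi$ to $\psi^\sigma$, the representation ${}^\sigma\omega_\psi$ satisfies the same characterisation with $\psi$ replaced by $\psi^\sigma$, i.e. ${}^\sigma\omega_\psi\simeq\omega_{\psi^\sigma}$; the same identity can alternatively be read off directly from the explicit model over $\mathbb{Q}(\psi)$ of \cite{trias_modular_weil}, whose structure constants depend on $\psi$ only through its values. Hence $\Theta'=\Theta_{[\psi^\sigma]}$ and the theorem follows. Along the way one checks the small bookkeeping points that $[\psi^\sigma]$ depends only on $[\psi]$ (so $\Theta_{[\psi^\sigma]}$ is well defined) and that $\omega_\psi\simeq\omega_{\psi'}$ for $\psi'\in[\psi]$, which makes all the twists consistent.

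I expect the only real obstacle to be the identification ${}^\sigma\omega_\psi\simeq\omega_{\psi^\sigma}$, and specifically the metaplectic bookkeeping it conceals: a priori the two genuine representations could differ by a (non-genuine) quadratic character of the metaplectic group, and ruling this out is what forces the appeal to the essential uniqueness of the Weil representation, respectively to the $\sigma$-stability of the even/odd decomposition, respectively to the explicit model. Everything else — exactness of ${}^\sigma$, its compatibility with $\textup{Res}$ and with $\boxtimes$, and the purely categorical nature of the largest isotypic quotient — is formal and already available from the first part of the paper.
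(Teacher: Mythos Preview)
Your proposal is correct and follows essentially the same approach as the paper: the paper's proof of the general statement (Theorem~\ref{thm:theta_lifts_and_galois_action}) is precisely your first ingredient, namely that ${}^\sigma(V_{\pi_1}) \simeq ({}^\sigma V)_{{}^\sigma\pi_1}$ and hence ${}^\sigma\Theta_V(\pi_1)\simeq\Theta_{{}^\sigma V}({}^\sigma\pi_1)$ by the uniqueness in Theorem~\ref{thm:big_theta_def}, and the specialisation to $V=\omega_\psi$ then only needs ${}^\sigma\omega_{\psi,X}\simeq\omega_{\psi^\sigma,X}$, which the paper records in \S\ref{sec:weil_rep_dependance_on_psi}. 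The one minor difference is that the paper obtains this last isomorphism directly from the explicit Schr\"odinger-model formulas (the map $f\mapsto\sigma\cdot f$ intertwines $\omega_{\psi,X}$ and $\omega_{\psi^\sigma,X}$ on the nose, since the metaplectic cocycle is $\{\pm 1\}$-valued), so the quadratic-twist ambiguity you worry about never arises; your Stone--von Neumann argument is a valid alternative, and you yourself note the explicit-model route as well.
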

 
There are several perspectives we would like to explore after this work, that appeared as some sources of motivation at a late stage of writing, such as questions related to the global theta correspondence and its rationality -- the rationality of certain automorphic periods seems of great interest and there is a series of papers  \cite{prasanna1,prasanna2,prasanna3} by Prasanna in this direction in the context of the Shimura correspondence and  the work of Waldspurger. There are also two additional questions that we do not address in this work and intend to study later. On the one hand, we simply bound the field of realisation in the local theta correspondence as it could shrink even more by pulling back the Weil representation to a specific dual pair, though our result is optimal for the pair $(\textup{Sp}(W),\{\pm 1 \})$. On the other hand, we did not consider the local archimedean version of the Weil representation. Though its usual archimedean model may not be well suited for a descent argument, its Fock model is more likely to be. Finally, one should be very cautious towards splittings in the theta correspondence as they may require to introduce some $4$-th or $8$-th roots of unity that could take us out of the realisation field.

\subsection*{Content of the paper} In the first section, we recall the definition of the character field of a representation as well as the notion of a field of realisation. In the second and third sections, we develop some general background about representations with coefficients in a perfect field that is not necessarily algebraically closed. We explain how extension and restriction of scalars behave and compare the largest isotypic quotients over a base field and its algebraic closure. In the fourth section, we apply the results of the previous sections in the context of theta lifts. We also introduce the Schur index. In the fifth section, we expose the Galois descent theorems we are going to use and we rephrase them in terms of Morita equivalences -- representation theorists may find this perspective appealing. The sixth section points out that the representation which is at the heart of the construction of the Weil representation, namely the Heisenberg representation, does not descend to a number field. This could look like a negative answer to descending the Weil representation over a number field, but the answer is a bit more subtle because, as we show in the last two sections, the Weil representation does descend to a number field even though the Heisenberg representation does not. This is, in some sense, the obstacle Dipendra Prasad already remarked by referring to the additive character $\psi$. The seventh and eighth sections are dedicated to performing our explicit Galois descent on the Weil representation. On top of finding the character field and the fields of realisation for the -- even and odd parts of the -- Weil representation, we are also able to determine its Schur index. Our results also apply in the modular setting -- \textit{i.e.} for coefficient fields of positive characteristic -- and in families -- \textit{i.e.} for coefficient rings such as rings of integers.

\addtocontents{toc}{\protect\setcounter{tocdepth}{0}}

\section*{Acknowledgements}

This paper would likely have remained at the stage of an interesting future project without a question raised in a discussion with Daniel Designi. His interest in the topic and his encouragement truly energized the author and helped bring this project to fruition as the present article. No expression of thanks can fully capture my appreciation for him. I am also indebted to Alberto M\'inguez and Shaun Stevens for their constant support and for their helpful comments on the final draft of this paper. Moreover, I greatly benefited from fruitful discussions with Petar Baki\'c, Raphaël Beuzart-Plessis, Marcela Hanzer, Gil Moss and Jack Sempliner. Finally, the short surveys by Dipendra Prasad on the theta correspondence have long been a great source of inspiration, especially during my PhD, and I remain deeply grateful each time I return to them. I would also like to express my gratitude to him for a stimulating conversation on the final draft of this paper and for pointing out two already existing papers in the topic.

The author was partially supported by the EPSRC Grant EP/V061739/1. This work was partly funded by the European Union ERC Consolidator Grant, RELANTRA, project number 101044930. Views and opinions expressed are however those of the author only and do not necessarily reflect those of the European Union or the European Research Council. Neither the European Union nor the granting authority can be held responsible for them. 

\addtocontents{toc}{\protect\setcounter{tocdepth}{1}} 

\section*{Notations}

A locally profinite group $G$ is a locally compact totally disconnected topological group. Let $K$ be a compact open subgroup of $G$. The pro-order of $K$ is the least common multiple of the cardinality of the finite quotients of $K$ \cite[I.1.5]{vig}. The pro-order $|G|$ of $G$ is the least common multiple of the $|K|$'s where $K$ runs over all compact open subgroups of $G$. When $G$ is a reductive group over $F$, \textit{i.e.} the $F$-points of a reductive algebraic group defined over $F$, we usually have $|G| =  n_f p^k$ where $n_f \in \mathbb{N}$ is prime-to-$p$ and $k \in \mathbb{N} \cup \{\infty\}$.

Let $R$ be a commutative ring. Let $C_c^\infty(G,R)$ be the space of locally constant compactly supported functions on $G$ valued in $R$. If $G$ contains an open subgroup of invertible pro-order in $R$, there exists a Haar measure $\mu$ of $G$ with values in $R$ by \cite[I.2.4]{vig}. If a compact open subgroup $K$ has invertible pro-order in $R$, there exists a unique measure $\mu_K$ such that $K$ has volume $1$. We call it the normalised measure on $K$. All such normalised measures are unique up to a scalar in $R^\times$ and the normalised measures generate all Haar measures on $G$. After fixing a normalised measure of $G$, we can endow $C_c^\infty(G,R)$ with a structure of $R$-algebra and we denote this algebra by $\mathcal{H}_R(G)$ and call it the Hecke algebra.

An $R[G]$-module $V$ is smooth if $\textup{Stab}_G(v) = \{ g \in G \ | \ g \cdot V\}$ is open in $G$ for all $v \in V$. We also use the word representation for a smooth $R[G]$-module. We denote by $\textup{Rep}_R(G)$ the category of smooth $R[G]$-modules. We say a representation is admissible if the subspace of $K$-fixed vectors $V^K$ is a finitely generated $R$-module. Let $H$ be a closed subgroup of $G$, we define a functor $\textup{Ind}_H^G : \textup{Rep}_R(H) \to \textup{Rep}_R(G)$ where for $\sigma \in \textup{Rep}_R(H)$, we associate the space $\textup{Ind}_H^G(\sigma)$ of functions $f : G \to \sigma$ such that $f(hg) = \sigma(h) f(g)$ and $f$ is smooth, endowed with the  smooth $G$-action $g \cdot f(g') = f(g' g)$. We also define the subfunctor $\textup{ind}_H^G$ of $\textup{Ind}_H^G$ by moreover requiring that $f$ has compact support modulo $H$.

For $n \in \mathbb{N}$, we denote by $\zeta_n \in \mathbb{C}$ the usual primitive $n$-root of unity \textit{i.e.} $\zeta_n = e^{\frac{2 i \pi}{n}}$. If there exists a non-trivial smooth (additive) character $\psi : F \to R^\times$, then necessarily the characteristic $\ell$ of $R$ is different from $p$. Moreover $R$ must contain enough $p$-roots or $p$-power roots of unity. Let $\mathbb{Z}[\zeta_{p^\infty}] = \cup_k \mathbb{Z}[\zeta_{p^k}]$ and let
$$\mathcal{A} = \left\{ \begin{array}{cc}
\mathbb{Z}[\frac{1}{p},\zeta_{p^\infty}] & \textup{ if } \textup{char} (F) = 0; \\
 & \\
\mathbb{Z}[\frac{1}{p},\zeta_p] & \textup{ if } \textup{char}(F) > 0.
\end{array} \right.$$
Then there exists a non-trivial character $\psi : F \to R^\times$ if and only if $R$ can be endowed with a structure of $\mathcal{A}$-algebra. We always assume $R$ satisfies this condition.

Let $F$ be a field of characteristic different from $2$, that is either a finite field of cardinality $q$ or a non-archimedean local field of residue characteristic $q$. We write $q=p^f$. When $F$ is local non-archimedean, we let $\mathcal{O}_F$ be its ring of integers and $k_F$ its residue field and we fix a uniformiser $\varpi_F$ in $\mathcal{O}_F$. Let $( \ , \ )_F$ be the quadratic Hilbert symbol, which is trivial if $F$ is finite. If $F$ is local non-archimedean and $V$ is a finite dimensional $F$-vector space, a lattice in $V$ is a free $\mathcal{O}_F$-module of rank the dimension of $V$.

Let $(W, \langle \ , \ \rangle)$ be a symplectic vector space of dimension $n=2m$ over $F$. A subspace $X \subseteq W$ is totally isotropic if $\langle \ , \ \rangle|_{X \times X}$ is identically zero. A totally isotropic subspace is maximal if and only if it has dimension $m$. Such a maximal space is called a Lagrangian in $W$. A complete polarisation $W=X \oplus Y$ is made of two transverse Lagrangians $X$ and $Y$ in $W$. The symplectic group $\textup{Sp}(W)$ is the group of isometries of $W$.

\part{Generalities about rationality}

\section{Rationality} \label{sec:rationality}

\subsection{} Let $R$ be a commutative ring. Let $\mathcal{H}$ be an $R$-algebra. Let $V$ be an $\mathcal{H}$-module that is finite free as an $R$-module. Let $\mathcal{B} = (e_i)_{i \in I}$ be an $R$-basis of $V$ and let $(e_i^*)_{i \in I}$ be its dual basis in $\textup{Hom}_R(V,R)$. This defines endomorphisms $e_{m,n} = e_n^* \otimes_R e_m \in \textup{End}_R(V)$ for $m, n \in \mathcal{B}$ via 
$$e_{m,n}(\sum_{i \in I} r_i e_i) = r_n e_m$$
where $(r_i)_{i \in I}$ are elements in $R$.  

For $h \in \mathcal{H}$, we denote by $h_V \in \textup{End}_R(V)$ the action of $h$ on $V$. There exists a unique family $(\alpha_{m,n})_{m,n \in I}$ of elements of $R$ such that $h_V = \sum_{m,n \in I} \alpha_{m,n} e_{m,n}$. We define $\textup{tr}_V : \mathcal{H} \to R$ by
$$\textup{tr}_V (h) = \sum_{m \in I} \alpha_{m,m}.$$
It is well-known that $\textup{tr}_V$ is actually independent of the choice of the basis $\mathcal{B}$.

\begin{defi} We call $\textup{tr}_V$ the trace-character of $V$. \end{defi}

\subsection{} From now on, let $R_0$ be $\mathbb{Q}$ or $\mathbb{F}_\ell$ and assume $R$ is an algebraic extension of $R_0$. We define $R_c(V)$ as the subfield of $R$ generated by $R_0$ and the values of $\textup{tr}_V$.

\begin{defi} We call $R_c(V)$ the character field of $V$ and we call any subfield of $R$ containing $R_c(V)$ a field of character of $V$. \end{defi} 

\subsection{} Let $\textup{Aut}(R)$ be the automorphism of fields of $R$. Such automorphisms are necessarily $R_0$-linear, so this is also $\textup{Aut}_{R_0}(R)$. For $Z$ a subset of $\textup{Aut}(R)$, we denote by $R^Z$ the subfield of $R$ formed by the elements fixed by $Z$.

For $\sigma \in \textup{Aut}(R)$, let $f_\sigma :  V \to V$ be the $\sigma$-equivariant isomorphism defined by
$$f_\sigma(\sum r_i e_i) = \sum \sigma(r_i) e_i.$$
We endow the $R$-vector space $V^{f_\sigma} = V$ with the action $\mathcal{H}$-action defined by
$$h_{V^{f_\sigma}} = f_\sigma h_V f_\sigma^{-1} \in \textup{End}_R(V) \textup{ for } h \in \mathcal{H}.$$
If $\mathcal{B}' = (e_i')$ is another $R$-basis of $V$, we can define $f_\sigma'$ and $V^{f_\sigma'}$ in an analogous way. The two $\mathcal{H}$-modules $V^{f_\sigma}$ and $V^{f_\sigma'}$ thus obtained are isomorphic. We denote by $V^\sigma$ this well-defined isomorphism class. 

Let $H(V) = \{ \sigma \in \textup{Aut}(R) \ | \ V^\sigma \simeq V\}$ and set $R_r(V) = R^{H(V)}$.

\begin{defi} We call $R_r(V)$ the rationality field of $V$ and we call any subfield of $R$ containing $R_r(V)$ a field of rationality of $V$. \end{defi}

\subsection{} Let $\bar{R}$ be an algebraic closure of $R$. We say $V$ is absolutely simple if $V \otimes_R \bar{R}$ is a simple $(\mathcal{H} \otimes_R \bar{R})$-module. By the linear independence of characters \cite[A VIII.376]{bou}, we deduce that

\begin{prop} \label{prop:rationality-field-equal-character-field} If $V$ is absolutely simple, then $R_c(V) = R_r(V)$. \end{prop}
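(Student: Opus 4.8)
The plan is to prove the two inclusions $R_c(V)\subseteq R_r(V)$ and $R_r(V)\subseteq R_c(V)$ separately. The first is formal and uses nothing about $V$; for the second I would argue that in fact $R_c(V)=R$, which packages the whole absolute-simplicity input and makes $R_r(V)\subseteq R=R_c(V)$ automatic, since $R_r(V)=R^{H(V)}$ is a subfield of $R$.

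For the first inclusion, I would unwind the definition of $V^\sigma$ given just before the statement: writing $h_V=\sum_{m,n\in I}\alpha_{m,n}e_{m,n}$, the identity $f_\sigma(\alpha\,e_{m,n})f_\sigma^{-1}=\sigma(\alpha)e_{m,n}$ yields $h_{V^{f_\sigma}}=\sum_{m,n\in I}\sigma(\alpha_{m,n})e_{m,n}$, hence $\textup{tr}_{V^\sigma}=\sigma\circ\textup{tr}_V$ (this is basis-independent, as isomorphic modules have the same trace-character). Consequently, if $\sigma\in H(V)$, i.e.\ $V^\sigma\simeq V$, then $\textup{tr}_{V^\sigma}=\textup{tr}_V$ forces $\sigma$ to fix every value of $\textup{tr}_V$, hence to fix $R_c(V)$ pointwise; since this holds for all $\sigma\in H(V)$, we get $R_c(V)\subseteq R^{H(V)}=R_r(V)$.

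For the equality $R_c(V)=R$, I would invoke Schur's lemma together with the density theorem: as $V$ is absolutely simple, $\textup{End}_{\mathcal H}(V)=R$, and Jacobson density then makes the structure map $\mathcal H\to\textup{End}_R(V)$ surjective — this surjectivity for an absolutely simple module is precisely what is supplied by the linear independence of characters \cite[A VIII.376]{bou}. Hence $\textup{tr}_V$ is surjective onto $R$, because for each $r\in R$ the endomorphism $r\,e_{m,m}$ lies in the image of $\mathcal H$ and has trace $r$; so $R_c(V)=R$, and combining with the first part, $R=R_c(V)\subseteq R_r(V)\subseteq R$. I expect the only substantive point to be this use of absolute simplicity, namely that it forces $\mathcal H$ to act through the \emph{full} endomorphism algebra $\textup{End}_R(V)$; alternatively one can phrase the proof as $H(V)=\textup{Aut}(R/R_c(V))$ — the inclusion $\supseteq$ holding because any $\sigma$ fixing $R_c(V)$ satisfies $\textup{tr}_{V^\sigma}=\sigma\circ\textup{tr}_V=\textup{tr}_V$, whence $V^\sigma\simeq V$ by the linear independence of characters applied to the absolutely simple modules $V$ and $V^\sigma$ — together with $R^{\textup{Aut}(R/R_c(V))}=R_c(V)$, which is legitimate here since $R/R_0$ is separable ($R_0$ being perfect) and since in any case $R_c(V)=R$.
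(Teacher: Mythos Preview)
Your proposal is correct. The paper's own proof is a one-line citation to Bourbaki's linear independence of characters, so there is little to compare in detail; your alternative phrasing via $H(V)=\textup{Aut}(R/R_c(V))$ is the natural unpacking of that citation. Your primary route---showing directly that $R_c(V)=R$ by Burnside/Jacobson density---is a mild but genuine repackaging: since $\mathcal{H}$ is an $R$-algebra and $V$ is absolutely simple, the structure map $\mathcal{H}\to\textup{End}_R(V)$ is onto, so $\textup{tr}_V$ already hits every element of $R$. This has the advantage of making the final step $R^{H(V)}=R_c(V)$ trivial, whereas the bare ``$H(V)=\textup{Aut}(R/R_c(V))\Rightarrow R^{H(V)}=R_c(V)$'' would otherwise require $R/R_c(V)$ to be normal (separability alone, which you invoke, is not enough). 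You handle this correctly by falling back on $R_c(V)=R$, so the alternative argument is not independent of the first; that is fine, but worth being explicit about.
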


In this situation, we simply write $R(V)$ for the character/rationality field of $V$.

\begin{rem} Note that, in positive characteristic $\ell$, we have $\textup{tr}_{\ell V} = \ell \textup{tr}_V$ is always identically zero, so $R_c(\ell V)= R_0$ in this case, though $R_r(\ell V)$ may be a strict extension of $R_0$. Therefore the proposition does not extend to all $V$ semisimple. \end{rem}

\subsection{} Let $\mathcal{H}_0$ be an $R_0$-algebra and assume $\mathcal{H} = \mathcal{H}_0 \otimes_{R_0} R$. The algebraic $\bar{R}$ of $R$ is also an algebraic closure of $R_0$. If $R' \subseteq \bar{R}$ is a subfield, we set $\mathcal{H}_{R'} = \mathcal{H}_0 \otimes_{R_0} R'$.

\begin{defi} We say a subfield $R' \subseteq \bar{R}$ is a field of realisation of $V$ if there exists an $\mathcal{H}_{R'}$-module $V'$ such that $V' \otimes_{R'} \bar{R} \simeq V \otimes_R \bar{R}$ as $\mathcal{H}_{\bar{R}}$-modules. \end{defi}

When $V'$ is realisation of $V$ over $R'$, it is clear that $R_c(V') = R_c(V)$. Therefore a field of realisation of $V$ is always a field of character. In the context of Proposition \ref{prop:rationality-field-equal-character-field}, a field of realisation is also a field of rationality if $V$ is absolutely simple.

\subsection{} We assume that $V$ is absolutely simple. Let $m(V) \in \mathbb{N} \cup \{+ \infty \}$ be the infimum of the degrees $[R':R(V)]$ where $R'$ runs over the fields of realisation of $V$.

\begin{defi} We call $m(V)$ the Schur index of $V$. \end{defi}

\subsection{} \label{sec:trace_of_admissible_reps} When $G$ is a locally profinite group admitting a compact open subgroup of invertible pro-order in $R$, there exists a Haar measure on $G$ with values in $R$ by \cite[I.2]{vig}. This leads to define the Hecke algebra $\mathcal{H}_R(G)$ with coefficients in $R$ by \cite[I.3]{vig}. A smooth representation $V$ of $G$ with coefficients in $R$ is an $R[G]$-module such that $\textup{Stab}_G(v) = \{ g \in G \ | \ g \cdot v = v\}$ is open for all $v \in V$. We denote by $\textup{Rep}_R(G)$ the category of smooth representations and by $\textup{Irr}_R(G)$ the isomorphism classes of irreducible representations.

We say that $V \in \textup{Rep}_R(G)$ is admissible if for all compact open subgroups $K$ of $G$ the vector space $V^K$ of $K$-fixed vectors has finite $R$-dimension. The trace-character $\textup{tr}_V : \mathcal{H}_R(G) \to R$ of admissible representations is defined in \cite[I.6]{vig}. When $K$ is a compact open subgroup of $G$, its restriction to the relative Hecke algebras $\mathcal{H}_R(G,K)$ gives a trace-character as considered above. If $V \in \textup{Rep}_R(G)$ is admissible of finite type, then $V$ is generated by $V^K$ for some compact open subgroup $K$ of $G$. In this case, we set $R_c(V) = R_c(V^K)$ and $R_r(V) = R_r(V^K)$, which do not depend on the choice of $K$. If moreover $V$ is absolutely simple, we simply write $R(V)$.

\section{Scalar extension and restriction of scalars}

In this section, let $R$ be a field. 

\subsection{} Let $G_1$ and $G_2$ be locally profinite groups. We assume $G_1$ and $G_2$ contain compact open subgroups of invertible pro-order in $R$. Let $(\pi_1,V_1) \in \textup{Rep}_R(G_1)$ and set $D_1 = \textup{End}_{R[G_1]}(\pi_1)$. The representation $(\pi_1,V_1)$ is a left $D_1$-module via $f_1 \cdot v_1 = f_1(v_1)$ for $f_1 \in D_1$ and $v_1 \in V_1$. This module structure commutes with the $G_1$-action, so $V_1$ is a module over $D_1 \otimes_R R[G_1] = D_1[G_1]$. We use similar notations for $G_2$. 

We now assume $(\pi_1,V_1) \in \textup{Rep}_R(G_1)$ and $(\pi_2,V_2) \in \textup{Rep}_R(G_2)$ are two irreducible representations. Their endomorphism rings $D_1 = \textup{End}_{R[G_1]}(\pi_1)$ and $D_2 = \textup{End}_{R[G_2]}(\pi_2)$ are division algebras by Schur's lemma. We apply \cite[A VIII.210, Th 2]{bou} to obtain:

\begin{lem} \label{lem:tensor-product-of-reps-and-subreps} We set $V=V_1 \otimes_R V_2 \in \textup{Rep}_R(G_1 \times G_2)$ and $D =\textup{End}_{R[G_1 \times G_2]}(V_1 \otimes_R V_2)$. Then 
$$D  \cong D_1 \otimes_R D_2.$$ Moreover, the set $\mathbb{V}$ of subrepresentations of $V$ is in bijection with the set $\mathbb{D}$ of right sub-$D$-modules of $D$, thanks to the inclusion preserving bijection
$$\begin{array}{ccc}
\mathbb{D} & \to & \mathbb{V} \\
D' & \mapsto & D' V
\end{array}.$$
\end{lem}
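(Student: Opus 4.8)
The plan is to reduce everything to the structure theorem for the endomorphism algebra of a tensor product of absolutely simple modules over a field. First I would verify the hypotheses needed to apply \cite[A VIII.210, Th 2]{bou}: the representations $(\pi_1,V_1)$ and $(\pi_2,V_2)$ are irreducible smooth modules over the Hecke algebras $\mathcal{H}_R(G_1)$ and $\mathcal{H}_R(G_2)$ respectively, and $V_1 \otimes_R V_2$ is naturally a module over $\mathcal{H}_R(G_1) \otimes_R \mathcal{H}_R(G_2) \cong \mathcal{H}_R(G_1 \times G_2)$. The cited Bourbaki result gives a canonical isomorphism $\textup{End}_{R[G_1 \times G_2]}(V_1 \otimes_R V_2) \cong D_1 \otimes_R D_2$; the only subtlety is that in general $D_1 \otimes_R D_2$ need not be a division algebra, but it is still a simple algebra (since $D_1$, $D_2$ are central simple over their centres and one invokes the standard results on tensor products of central simple algebras, or more precisely the version of Bourbaki's theorem that $V_1 \otimes_R V_2$ is a semisimple, indeed isotypic, $R[G_1\times G_2]$-module with endomorphism ring $D_1 \otimes_R D_2$). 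I would spell out that $D := D_1 \otimes_R D_2$ acts on $V = V_1 \otimes_R V_2$ on the left, commuting with the $G_1 \times G_2$-action.

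Next, the bijection between subrepresentations and right ideals. Since $V$ is an isotypic (in fact, by Bourbaki, a sum of copies of a single absolutely simple module after base change, but over $R$ itself a $D$-isotypic object), the double centralizer picture applies: $V$ is a faithful simple module over $\mathcal{H}_R(G_1 \times G_2)$ in the isotypic sense, and $D = \textup{End}_{R[G_1\times G_2]}(V)$. The key point is that $V$, viewed as a left $D$-module, is free of some rank, or rather that $V \cong D^{?}$ as a left $D$-module — more precisely, I would argue that $V$ is a \emph{progenerator} over $D$, so that $V \otimes_D -$ and $\textup{Hom}_D(V,-)$ set up a Morita equivalence between $D$ and $\textup{End}_D(V) \cong \mathcal{H}_R(G_1 \times G_2)/\textup{Ann}(V)$. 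Under this Morita equivalence, the lattice of $\mathcal{H}_R(G_1\times G_2)$-submodules of $V$ (equivalently $R[G_1 \times G_2]$-subrepresentations) corresponds to the lattice of right $D$-submodules of $D$, i.e. right ideals of $D$, via $D' \mapsto D' \cdot V = D' \otimes_D V$. I would check this map is inclusion-preserving and bijective, with inverse $W \mapsto \{ f \in D : f(V) \subseteq W \}$ or equivalently the "colon" construction.

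Concretely, to avoid invoking heavy Morita machinery, I would instead argue directly: choose, via Bourbaki's theorem, an identification $V_1 \otimes_R V_2 \cong \bigoplus$ of $D$-isotypic pieces, or better, use that any $R[G_1\times G_2]$-subrepresentation $V'$ is automatically a $D$-submodule (because $D$ is precisely the commutant, hence stabilizes every subrepresentation? — this is false in general, so I must be careful). The honest route: every $\mathcal{H}$-submodule $V' \subseteq V$ is $D$-stable iff $D' V' \subseteq V'$ where $D' = $ its stabilizer, and Bourbaki's theorem precisely says the $\mathcal{H}$-submodules of the isotypic module $V$ are exactly the $D$-submodules of the form $D'V$ for $D'$ a right ideal of $D$ — because $V$ is a faithful balanced bimodule. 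So the cleanest presentation is to cite the relevant part of \cite[A VIII.210]{bou} (or \cite[A VIII §5]{bou} on isotypic modules and their submodules) twice: once for the endomorphism algebra statement, once for the submodule-lattice statement, observing that the latter is literally the statement that for a semisimple ring $D$ acting on a faithful isotypic (balanced) module $V$, submodules correspond bijectively to right ideals.

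The main obstacle I expect is the following: $D_1 \otimes_R D_2$ is typically \emph{not} a division algebra, so one cannot pretend $V$ is a vector space over $D$ and reduce to linear algebra; one genuinely needs the semisimple-ring version of the correspondence. The care required is in ensuring the bijection $D' \mapsto D'V$ is both well-defined (lands in subrepresentations — clear, since $D$ commutes with $G_1 \times G_2$) and surjective (every subrepresentation arises this way — this is the substantive content, equivalent to saying $V$ is a balanced $(D, \mathcal{H})$-bimodule, which follows from the double centralizer theorem applied in the semisimple setting guaranteed by Bourbaki). I would also note that injectivity follows because $V$ is a faithful $D$-module: if $D' V = D'' V$ with $D' \subseteq D''$, then $(D''/D')$ annihilates a complement, forcing $D' = D''$. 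This completes the plan; the remaining verifications are routine once the correct Bourbaki statements are pinned down.
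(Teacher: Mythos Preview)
Your core approach matches the paper exactly: the paper's entire proof is the sentence ``We apply \cite[A VIII.210, Th 2]{bou} to obtain'' preceding the lemma, with no further argument. Bourbaki's theorem delivers both the endomorphism ring computation $D \cong D_1 \otimes_R D_2$ and the bijection between sub-$(A_1 \otimes_R A_2)$-modules of $V_1 \otimes_R V_2$ and right ideals of $D$ in one stroke, so nothing else is needed.

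However, your elaboration contains several inaccuracies that you should drop. The claim that $D_1 \otimes_R D_2$ is ``still a simple algebra'' is false: take $D_1 = D_2 = \mathbb{C}$ over $R = \mathbb{R}$, giving $\mathbb{C} \otimes_{\mathbb{R}} \mathbb{C} \cong \mathbb{C} \times \mathbb{C}$. Likewise $V_1 \otimes_R V_2$ is not isotypic in general, and it need not even be semisimple over an imperfect $R$ (the paper only imposes perfectness from Section~2.3 onward, not at this lemma). The Morita/progenerator route and the double-centraliser discussion are therefore on shaky ground as written. None of this is needed: Bourbaki's theorem holds for an arbitrary field $R$ and arbitrary simple $A_i$-modules $V_i$, and directly gives the lattice bijection without any semisimplicity hypothesis on $D$. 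Just cite it and stop.
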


\subsection{} Thanks to Lemma \ref{lem:tensor-product-of-reps-and-subreps}, which is a good replacement in the modular setting for the linear independence of characters, we are going to generalise \cite[II.4.4]{vig} which is only valid in characteristic $0$.

Let $G$ be a locally profinite group containing a compact open subgroup of invertible pro-order in $R$. We recall the definition of the action of a ``Galois'' element on a given representation. Let $R'$ be an algebraic extension of $R$ and let $w : R' \to R'$ be an automorphism of $R$-algebras. For any representation $(\rho,V) \in \textup{Rep}_{R'}(G)$, choosing an $R'$-basis $(e_i)_{i \in I}$ of $V$, we define $(w \rho , V) \in \textup{Rep}_R(G)$ by
$$[w \rho(g)]_{i,j} =  w([\rho(g)]_{i,j}]) \textup{ where } i, j \in I \textup{ and } g \in G.$$
We obtain a well-defined representation in the  sense that the isomorphism class of $(w \rho,V)$ is independent of the choice of the basis $(e_i)_{i \in I}$ \cite[II.4.1.a]{vig}.

We fix an algebraic closure $\bar{R}$ of $R$ and recall a few notions from \cite[II.4]{vig}. The rationality field $R_r(\rho)$ of a representation $\rho \in \textup{Rep}_{\bar{R}}(G)$ is defined as the fixed field of $H(\rho) = \{ w \in \textup{Gal}_R(\bar{R}) \ | \ w \rho \simeq \rho \}$ in $\bar{R}$ \textit{i.e.} 
$$R_r(\rho) = \bar{R}^{H(\rho)}.$$
Note that the extension $R_r(\rho)/R$ is not normal in general, or equivalently $H(\rho)$ is not necessarily normal in $\textup{Gal}_R(\overline{R})$, even if $R=\mathbb{Q}$. Indeed, endow $G=\mathbb{Z}$ with the discrete topology. The character $\chi$ sending $1 \in \mathbb{Z}$ to $\sqrt[3]{2} \in \overline{\mathbb{Q}}$ has rationality field $\mathbb{Q}[\sqrt[3]{2}]$ since $H(\chi) = \textup{Gal}(\overline{\mathbb{Q}}/\mathbb{Q}[\sqrt[3]{2}])$ but the extension $\mathbb{Q}[\sqrt[3]{2}]/\mathbb{Q}$ is not normal. For finite groups however, normality is ensured by the fact that there exists a cyclotomic extension, depending on the exponent of $G$, that is a splitting field \textit{i.e.} all irreducible representations can be realised over that field.

If $R$ is not perfect, we also remark that $R_r(\rho)/R$ can be really big because $R_r(\rho)$ must contain the perfect closure of $R$. To avoid complication coming from the imperfect case, we exclude it from now on.

\begin{rem} When $\rho$ is admissible and finitely generated, we can relate $R_r(\rho)$ to the rationality field defined in Section \ref{sec:rationality}. For all compact open subgroups $K$ of invertible pro-order such that $\rho^K$ generates $\rho$, we have $R_r(\rho) = R_r(\rho^K)$. We can also define the character field $R_c(\rho)$ of $rho$ by defining its trace-character as in \cite{vig}. Then $R_c(\rho) = R_c(\rho^K) = R_r(\rho^K)$ by Proposition \ref{prop:rationality-field-equal-character-field} and we can denote this field by $R(\rho)$. \end{rem}

\subsection{} From now on, assume $R$ is a perfect field. A field of realisation of $\rho$ is a field $E$ such that there exists $\tau \in \textup{Rep}_E(G)$ irreducible such that $\tau \otimes_E \bar{R} \simeq \rho$. In this case, we say that $\rho$ can be realised over $E$. Since $R$ is perfect, the algebraic extension $\bar{R}/R$ is Galois and we know \cite[II.4.1.c]{vig} that a field of realisation of $\rho$ must contain its rationality field $R_r(\rho)$ and its character field $R_c(\rho)$. If $\rho$ is irreducible and admissible, we recall that its character/rationality field $R(\rho)$ is not necessarily a field of realisation and the Schur index $m(\rho)$ measures the smallest degree of a field of realisation over $R(\rho)$. For $E$ a field extension of $R$, we let $\textup{Hom}_R(E,\bar{R})$ denote the $R$-linear embeddings of the field $E$ in $\bar{R}$.

\begin{theo} \label{thm:decomposition_scalar_extension} Let $R$ be a perfect field. Let $\pi \in \textup{Rep}_R(G)$ be irreducible and admissible. Then $D=\textup{End}_{R[G]}(\pi)$ has finite dimension over $R$. Let
\begin{itemize}[label=$\bullet$]
\item $E$ be the centre of $D$ and $n = \textup{dim}_R(E)$;
\item $m$ be the degree of $D$ over its centre $E$ \textit{i.e.} $m^2 = \textup{dim}_E(D)$.
\end{itemize}
Let $\rho \in \textup{Rep}_{\bar{R}}(G)$ be an irreducible factor in $\pi \otimes_R \bar{R}$. Let $\mathcal{O}_\rho$ be the $\textup{Gal}_R(\bar{R})$-orbit of $\rho$. Then $\mathcal{O}_\rho$ is a finite set \textit{i.e.} $R(\rho)/R$ is finite and we have
$$\pi \otimes_R \bar{R} \simeq m \ \bigg( \bigoplus_{\rho_w \in \mathcal{O}_\rho} \rho_w \bigg).$$
Moreover there exists a bijection $\mathcal{O}_\rho \simeq \textup{Hom}_R(E,\bar{R})$ compatible with the $\textup{Gal}_R(\bar{R})$-actions on each side. In particular $|\mathcal{O}_\rho| = n$ and $R(\rho) \simeq E$. \end{theo}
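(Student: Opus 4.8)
The plan is to reduce the statement to the classical structure theory of central simple algebras combined with the correspondence between irreducible factors of $\pi \otimes_R \bar{R}$ and simple modules over the scalar extension of $D$. First I would establish that $D = \textup{End}_{R[G]}(\pi)$ is finite-dimensional over $R$: since $\pi$ is admissible and irreducible, it is generated by $\pi^K$ for some compact open $K$ of invertible pro-order, and $D$ embeds into $\textup{End}_{\mathcal{H}_R(G,K)}(\pi^K)$, which is finite-dimensional because $\pi^K$ is a finite-dimensional $R$-vector space. By Schur's lemma $D$ is a division algebra, hence a central division algebra over its centre $E$, with $[E:R]=n$ and $[D:E]=m^2$ as in the statement.

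Next I would analyse $\pi \otimes_R \bar{R}$ as a module over $\mathcal{H}_{\bar{R}}(G,K)$ (equivalently, a smooth $\bar{R}[G]$-representation generated by its $K$-fixed vectors). The key point is that $\textup{End}_{\bar{R}[G]}(\pi \otimes_R \bar{R}) \simeq D \otimes_R \bar{R}$; this follows because forming $K$-invariants and extending scalars commute, and $\textup{End}_{\mathcal{H}_R(G,K)}(\pi^K) \otimes_R \bar{R} \simeq \textup{End}_{\mathcal{H}_{\bar{R}}(G,K)}(\pi^K \otimes_R \bar{R})$ by flatness and finite-dimensionality. Since $R$ is perfect, $\bar{R}/R$ is Galois, so $D \otimes_R \bar{R}$ is a semisimple $\bar{R}$-algebra: its centre is $E \otimes_R \bar{R} \simeq \prod_{\iota \in \textup{Hom}_R(E,\bar{R})} \bar{R}$ (as $E/R$ is separable of degree $n$), giving $n$ primitive central idempotents $e_\iota$, and each $D \otimes_R \bar{R} \cdot e_\iota$ is a central simple $\bar{R}$-algebra of degree $m$, hence $\simeq M_m(\bar{R})$ because $\bar{R}$ is algebraically closed. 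Therefore $D \otimes_R \bar{R} \simeq \prod_{\iota} M_m(\bar{R})$, which has exactly $n$ isomorphism classes of simple modules, each appearing with multiplicity $m$ in the regular module.

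I would then transfer this back to representations via Lemma \ref{lem:tensor-product-of-reps-and-subreps} (or more directly via the Morita-type equivalence between $\mathcal{H}_{\bar{R}}(G,K)$-modules in the block of $\pi$ and $D\otimes_R\bar{R}$-modules): the lattice of subrepresentations of $\pi \otimes_R \bar{R}$ corresponds to the lattice of submodules of $D \otimes_R \bar{R}$ over itself, and the isotypic decomposition $D \otimes_R \bar{R} \simeq \prod_\iota M_m(\bar{R})$ translates into $\pi \otimes_R \bar{R} \simeq m \big( \bigoplus_{\rho_w \in \mathcal{O}_\rho} \rho_w \big)$ where the $\rho_w$ are the pairwise non-isomorphic irreducible factors. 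To see that these factors form a single $\textup{Gal}_R(\bar{R})$-orbit and to get the bijection $\mathcal{O}_\rho \simeq \textup{Hom}_R(E,\bar{R})$, I would use that $\textup{Gal}_R(\bar{R})$ acts on $\pi \otimes_R \bar{R}$ through its action on the second factor, permuting the idempotents $e_\iota$ exactly as it permutes $\textup{Hom}_R(E,\bar{R})$ (a simply transitive action on $\textup{Hom}_R(E,\bar R)$ up to the subgroup fixing $E$, and transitive since $E/R$ is a finite separable extension with $\bar R$ containing its Galois closure); equivariance of the bijection is then immediate, and $R(\rho) = \bar{R}^{H(\rho)}$ is the fixed field of the stabiliser of one embedding, i.e. the image of $E$ in $\bar R$, so $R(\rho) \simeq E$ and $|\mathcal{O}_\rho| = n$.

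The main obstacle I anticipate is the identification $\textup{End}_{\bar{R}[G]}(\pi \otimes_R \bar{R}) \simeq D \otimes_R \bar{R}$ together with the passage between the representation-theoretic lattice of subrepresentations and the module-theoretic one over $D \otimes_R \bar R$ in the modular setting, where the linear independence of characters is unavailable; this is precisely why Lemma \ref{lem:tensor-product-of-reps-and-subreps} was set up, and the care needed is in checking that extension of scalars is compatible with taking $K$-invariants and with endomorphism algebras for admissible representations. Once that dictionary is in place, the rest is the standard Wedderburn/Galois-descent bookkeeping for central simple algebras over a perfect field.
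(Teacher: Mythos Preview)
Your global strategy coincides with the paper's: compute $D\otimes_R\bar R$ via the decomposition $E\otimes_R\bar R\simeq\prod_{\iota}\bar R$ and the splitting of each factor as $M_m(\bar R)$, then read off the multiplicity-$m$ decomposition and identify the Galois orbit with $\textup{Hom}_R(E,\bar R)$ via the action on the central idempotents. Where you diverge is in the bridge between the algebra side and the representation side. You work directly over $\bar R$ and propose to invoke Lemma~\ref{lem:tensor-product-of-reps-and-subreps} for the lattice correspondence; but that lemma applies to $\pi_1\otimes_R\pi_2$ with both factors irreducible, and $\bar R$ is not an irreducible $R$-representation of any group. The paper sidesteps this by passing to a \emph{finite} extension $R'$ (the Galois closure of a degree-$m$ separable splitting field of $D$), viewing $R'$ as an irreducible $R$-representation of the discrete group $\mathbb{Z}$ acting by a primitive element, and then applying Lemma~\ref{lem:tensor-product-of-reps-and-subreps} to $\pi\otimes_R R'\in\textup{Rep}_R(G\times\mathbb{Z})$; since the $\mathbb{Z}$-action is precisely $R'$-multiplication, $(G\times\mathbb{Z})$-submodules are exactly $R'[G]$-submodules. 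Semisimplicity of $\pi\otimes_R R'$ is then imported from \cite[II.4.2]{vig}, and absolute irreducibility of the resulting pieces lets one pass to $\bar R$ at the end.

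Your alternative via $K$-invariants and Jacobson density is viable and in some ways more direct: the image of $\mathcal{H}_R(G,K)$ in $\textup{End}_R(\pi^K)$ is $\textup{End}_D(\pi^K)\simeq M_r(D^{\mathrm{op}})$, so $\pi^K\otimes_R\bar R$ is a module over the semisimple ring $M_r(D^{\mathrm{op}})\otimes_R\bar R$, hence semisimple, and the endomorphism identification $\textup{End}_{\bar R[G]}(\pi\otimes_R\bar R)\simeq D\otimes_R\bar R$ follows as you outline. The point to make explicit is that this semisimplicity at the level of $\pi^K\otimes_R\bar R$ (or a direct citation of \cite[II.4.2]{vig} for $\pi\otimes_R\bar R$ itself) is what replaces the lattice statement of Lemma~\ref{lem:tensor-product-of-reps-and-subreps} in your argument; without it, knowing only the endomorphism algebra does not yet give irreducibility of the pieces $e_{jj}(\pi\otimes_R\bar R)$. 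Once that is in place, both routes converge on the same Galois-on-idempotents endgame. The paper's $\mathbb{Z}$-trick buys a self-contained use of Lemma~\ref{lem:tensor-product-of-reps-and-subreps}; your route avoids the artificial auxiliary group at the cost of invoking density and a separate semisimplicity input.
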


\begin{proof} We are going to show there exists a finite extension $R'$ of $R$ in $\bar{R}$ such that the result is true over $R'$. Our goal is to prove that the representation $\pi_1 \otimes_R R'$ is a sum of absolutely irreducible representations with the same multiplicity  and obtained as Galois conjugate of one another. First of all, $D$ is a division algebra by Schur's lemma and $R$ is contained in its centre $E$. This division algebra has finite dimension over $R$ because $\pi$ is admissible and irreducible. In particular there exists a (separable) extension $E'$ of $E$ of degree $m$, where $m^2 = \textup{dim}_E(D)$, such that $D \otimes_E E' \simeq M_{m}(E')$ is split.

We embed $E'$, and therefore $E$, in $\bar{R}$ and take the Galois closure $R'$ of $E'$ in $\bar{R}$. We see $R'$ as a an irreducible representation of the group $\mathbb{Z}$ endowed with the discrete topology in the following way. Thanks to the primitive element theorem, there exists a non-zero $\beta \in R'$ with minimal polynomial $P$ such that $R' = R[\beta] \simeq R[X]/(P(X))$. The representation $(\pi',R') \in \textup{Rep}_R(\mathbb{Z})$ defined by $\pi'( 1 ) = \beta \in \textup{GL}_R(R')$ is irreducible.

According to Lemma \ref{lem:tensor-product-of-reps-and-subreps}, the subrepresentations of $\pi_1 \otimes_R \pi_2 = \pi_1 \otimes_R R'$ correspond to right sub-$(D \otimes_R R')$-modules of $D \otimes_R R' = \textup{End}_G(\pi) \otimes_R R'$. Since
$$E \otimes_R R' \simeq \prod_{\textup{Hom}_R(E,R')} R'$$
and $|\textup{Hom}_R(E,R')| = n$, we deduce that
$$D \otimes_R R' \simeq \prod_{1 \leq k \leq n} M_m(R').$$
Moreover $\pi \otimes_R R'$ is semisimple by \cite[II.4.2]{vig}. Since $D \otimes_R R' = \textup{End}_G(\pi \otimes_R R')$ by Lemma \ref{lem:tensor-product-of-reps-and-subreps} again, there exist non-isomorphic irreducible representations $(\tau_k)$ in $\textup{Rep}_{R'}(G)$ such that
$$\pi \otimes_R R' \simeq \bigoplus_{1 \leq k \leq n} (m \tau_k).$$
Moreover $\textup{End}_G(\tau_k) = R'$, so each $\tau_k$ is absolutely irreducible. As a result $\rho_k = \tau_k \otimes_{R'} \bar{R}$ is irreducible. Hence $\pi \otimes_R \bar{R} \simeq m  (\bigoplus_{1 \leq k \leq n} \rho_k)$.

There remains to show that $\textup{Gal}_R(R')$ acts simply transitively on the $n$ isomorphism classes defined by the family $(\tau_k)$. Because $\pi$ is defined over $R$, ${}^\sigma (\pi \otimes_R R')$ and $\pi \otimes_R R'$ are isomorphic for all $\sigma \in \textup{Gal}_R(R')$. Let $e_k$ be the idempotent in $D$ which cuts the $\tau_k$-isotypic part of $\pi \otimes_R R'$. Note that $e_k \in E \otimes_R R'$ belongs to the centre of $D$. The Galois group $\textup{Gal}_R(R')$ acts transitively on $\textup{Hom}_R(E,R')$. It also induces an action on $E \otimes_R R'$ which acts transitively on the $e_k$'s. Let $\sigma \in \textup{Gal}_R(R')$ and let $k'$ be such that ${}^\sigma e_k = e_{k'}$. We obtain
$${}^\sigma(m \tau_k) \simeq {}^\sigma(e_k (\pi \otimes_R R')) \simeq e_{k'} {}^\sigma(\pi \otimes_R R') \simeq e_{k'} (\pi \otimes_R R') \simeq m \tau_{k'}.$$
So the $\tau_k$ form a single orbit under $\textup{Gal}_R(R')$. Furthermore there exists a unique $\tau$ among the $\tau_k$'s such that $E \to D \otimes_R R' \to \textup{End}_{R'[G]}(\tau) = R'$ corresponds to the natural containment $E \subset R'$. We have $\mathcal{O}_\tau \simeq \textup{Hom}_R(E,R')$ by associating to $\tau_k$ the embedding $E \to D \otimes_R R' \to \textup{End}_{R'[G]}(\tau_k) =R'$. This bijection is compatible with the natural action of $\textup{Gal}_R(R')$ on each side. Let $\sigma \in \textup{Gal}_R(R')$. Then ${}^\sigma \tau \simeq \tau$ if and only if $E \subseteq R'$ is preserved by $\sigma$ \textit{i.e.} if $i_1 : E \subseteq R'$ is the natural containment, we have $\sigma \circ i_1 = i_1$. We deduce that $E(\tau) = E$ in $R'$. This completes the proof. \end{proof}

\subsection{} We have the following lemma for the restriction of scalars:

\begin{lem} \label{lem:scalar_restriction_uniqueness} Let $R$ be a perfect field. Let $\rho \in \textup{Rep}_{\bar{R}}(G)$ be an irreducible admissible representation that can be realised over a finite extension of $R$. Then there exists an irreducible admissible representation $\pi \in \textup{Rep}_R(G)$, unique up to isomorphism, such that $\pi \otimes_R \bar{R}$ contains $\rho$ as a subquotient. We denote it by $\pi(\rho,R)$. Moreover, if $R'$ is a field of realisation of $\rho$ of minimal degree such that $\tau \otimes_{R'} \bar{R} = \rho$ with $\tau \in \textup{Rep}_{R'}(G)$, then 
$$\pi(\rho,R) \simeq \tau|_R.$$ \end{lem}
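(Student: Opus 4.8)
The plan is to split the statement into an existence part, a uniqueness part, and the identification with $\tau|_R$. For existence, I would start from a field of realisation $R'/R$ of $\rho$, which we may assume finite by hypothesis, together with $\tau \in \textup{Rep}_{R'}(G)$ irreducible such that $\tau \otimes_{R'} \bar R \simeq \rho$. Since $R$ is perfect, $R'/R$ is a finite separable extension, so $R'$ is a finite-dimensional commutative semisimple $R$-algebra; hence restriction of scalars $\tau \mapsto \tau|_R$ produces a representation of $G$ over $R$ that is admissible (as $\dim_R \tau^K = [R':R]\dim_{R'}\tau^K < \infty$ for every compact open $K$) and of finite type. The key point is that $\tau|_R$ is \emph{irreducible} over $R$: by Lemma~\ref{lem:tensor-product-of-reps-and-subreps} applied with $G_1=G$, $G_2=\mathbb{Z}$ (discrete) realising $R'$ as in the proof of Theorem~\ref{thm:decomposition_scalar_extension}, or more directly by noting $\textup{End}_{R[G]}(\tau|_R)$ contains $R'$ and, since $\tau$ is absolutely irreducible over $R'$ after one more base change, one checks $\textup{End}_{R[G]}(\tau|_R) = R'$, a division algebra, so $\tau|_R$ is irreducible. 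Then $(\tau|_R)\otimes_R \bar R \simeq \tau \otimes_R \bar R$, and since $R' \otimes_R \bar R \simeq \prod_{\textup{Hom}_R(R',\bar R)} \bar R$, this decomposes as a sum of the Galois conjugates of $\rho = \tau \otimes_{R'}\bar R$; in particular it contains $\rho$ as a subquotient. This gives the existence of $\pi := \tau|_R$ with the desired property, and simultaneously proves the last displayed formula \emph{provided} $\pi(\rho,R)$ is well defined, i.e. provided uniqueness holds.

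For uniqueness, suppose $\pi_1, \pi_2 \in \textup{Rep}_R(G)$ are both irreducible admissible with $\rho$ a subquotient of $\pi_i \otimes_R \bar R$ for $i=1,2$. By Theorem~\ref{thm:decomposition_scalar_extension} applied to $\pi_i$, the semisimplification of $\pi_i \otimes_R \bar R$ is $m_i$ times the sum over the $\textup{Gal}_R(\bar R)$-orbit $\mathcal{O}_{\rho_i}$ of some irreducible factor $\rho_i$; since $\rho$ occurs as a subquotient, $\rho$ lies in that orbit, so $\mathcal{O}_{\rho_1} = \mathcal{O}_\rho = \mathcal{O}_{\rho_2}$, and in particular $\pi_1\otimes_R\bar R$ and $\pi_2\otimes_R\bar R$ have a common irreducible subquotient $\rho$. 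I would then argue that an irreducible $\pi \in \textup{Rep}_R(G)$ is determined up to isomorphism by any single irreducible subquotient of $\pi \otimes_R \bar R$: indeed $\rho$ and all its Galois conjugates occur in $\pi_i\otimes_R\bar R$; applying the scalar-restriction direction to $\rho$ over the finite extension $R(\rho)/R$ and using that $\textup{Hom}_{\bar R[G]}(\rho, \pi_i\otimes_R\bar R)\neq 0$ corresponds by adjunction (for the finite separable extension, restriction of scalars is both left and right adjoint to extension of scalars, up to a twist by the invertible module $R(\rho)$) to a nonzero $G$-map between $\pi(\rho,R(\rho))|_R$-type objects and $\pi_i$, forcing $\pi_i \simeq \pi(\rho, R)$ for both $i$. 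Concretely, this uses that $\pi_i$ is a summand of $(\pi_i \otimes_R \bar R)|_R$ and that the latter is a sum of conjugates of $(\tau|_{R(\rho)})|_R$-pieces, together with $\textup{End}_{R[G]}(\pi_i)$ being a division algebra so there are no nontrivial idempotents to worry about.

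The main obstacle I expect is the irreducibility of $\tau|_R$ and, relatedly, pinning down $\textup{End}_{R[G]}(\tau|_R)$: one must rule out that restriction of scalars breaks $\tau$ into several $R$-constituents. The cleanest route is to reduce to the absolutely irreducible case first — replace $R'$ by a finite extension over which $\tau$ becomes absolutely irreducible is \emph{not} allowed since we want minimality, so instead I would argue directly: $\textup{End}_{R[G]}(\tau|_R) \supseteq R'$, and because $\tau$ is irreducible over $R'$ with $\textup{End}_{R'[G]}(\tau)$ a division algebra containing $R'$, a nonzero $R[G]$-endomorphism of $\tau|_R$ commutes with the $R'$-action after base change along $R'\otimes_R R' \to R'$ (the multiplication map picking out a factor), whence it is $R'$-linear and lies in $\textup{End}_{R'[G]}(\tau)$; combined with the hypothesis that $R'$ has \emph{minimal} degree — which, by the Schur-index discussion preceding Theorem~\ref{thm:decomposition_scalar_extension}, forces $R' = R(\rho)$ exactly when the Schur index is $1$, and in general $R' \supseteq R(\rho)$ with $[R':R(\rho)] = m(\rho)$ — one identifies $\textup{End}_{R[G]}(\tau|_R)$ with a division algebra and concludes $\tau|_R$ is irreducible. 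The subtlety about whether $R'$ of minimal degree is unique (it need not be, only its degree is an invariant) should be handled by remarking that the \emph{isomorphism class} of $\tau|_R$ is independent of the chosen minimal $R'$, which follows once uniqueness of $\pi(\rho,R)$ is established — so the two parts of the lemma are proved together, existence-of-$\pi$ and the formula simultaneously, with uniqueness slotted in between.
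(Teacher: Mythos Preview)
Your plan contains a genuine gap at the key step, namely the claim that $\textup{End}_{R[G]}(\tau|_R) = R'$. This is \emph{false} whenever the Schur index $m(\rho) > 1$. Indeed, $\tau|_R \otimes_R \bar R \simeq \bigoplus_{w \in \textup{Hom}_R(R',\bar R)} {}^w\rho$, and since ${}^w\rho$ depends only on $w|_{R(\rho)}$, each distinct conjugate appears with multiplicity $m = [R':R(\rho)]$. Hence $\textup{End}_{R[G]}(\tau|_R) \otimes_R \bar R \simeq \prod_{w'} M_m(\bar R)$ has dimension $nm^2$ over $\bar R$ (with $n = [R(\rho):R]$), whereas $R'$ has dimension only $nm$ over $R$; the endomorphism ring is in fact the full division algebra $D$ of Theorem~\ref{thm:decomposition_scalar_extension}, in which $R'$ sits only as a maximal subfield. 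Your proposed justification---that an $R[G]$-endomorphism becomes $R'$-linear after projecting along the multiplication map $R'\otimes_R R' \to R'$---does not work: nothing forces an $R$-linear $G$-equivariant map to commute with the $R'$-scalar action. Your fallback, that $\textup{End}_{R[G]}(\tau|_R)$ is at least \emph{some} division algebra, is not established by your argument either, and even if it were, a module with local endomorphism ring is only indecomposable, not irreducible; you would still need semisimplicity of $\tau|_R$, which you never address. Finally, your uniqueness argument invokes $\pi(\rho,R(\rho))$ and $\pi(\rho,R)$ before they are defined, making it circular as written.

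The paper avoids all of this by inverting the order of the argument. Uniqueness is proved first, via the one-line observation that $(\pi \otimes_R \bar R)|_R$ is $\pi$-isotypic for any irreducible $\pi$, hence so is $W|_R$ for any subquotient $W$ of $\pi \otimes_R \bar R$, in particular $\rho|_R$; this pins down $\pi$ immediately. For existence, the paper takes an \emph{arbitrary} (not necessarily minimal) field of realisation $R'$, picks an irreducible quotient $\pi$ of $\tau|_R$, and uses the adjunction between restriction of scalars and $\textup{Hom}_{R[G]}(R'[G],-)$ to embed $\tau|_R$ into $[R':R]$ copies of $\pi$; this shows $\tau|_R$ is $\pi$-isotypic without ever computing its endomorphism ring. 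Only for the final formula, under the minimality hypothesis on $R'$, does the paper invoke Theorem~\ref{thm:decomposition_scalar_extension}: identifying a copy $E'$ of the centre $E$ of $D$ inside $R'$ with $[R':E']=m$, one gets $\pi \otimes_{E'} R' \simeq m\tau$, hence $(m\tau)|_R \simeq m\pi$, whence $\tau|_R \simeq \pi$. Irreducibility of $\tau|_R$ thus emerges as a \emph{consequence} of the multiplicity count, not as an input to be verified directly.
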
 

\begin{proof} We start by proving the uniqueness statement. For all irreducible admissible $\pi \in \textup{Rep}_R(G)$, the representation $(\pi \otimes_R \bar{R})|_R$ is $\pi$-isotypic. Therefore, for any subquotient $W$ of $\pi \otimes_R \bar{R}$, the representation $W|_R$ is $\pi$-isotypic. In particular this holds when $W = \rho$. As $\rho|_R$ is isotypic, this ensures the uniqueness of the representation $\pi$ assuming it exists.

We realise $\rho$ over a finite extension $R'$ of $R$ with $\tau \otimes_{R'} \bar{R} = \rho$. It is clear that $\tau$ is admissible because $\rho$ itself is. This implies that $\tau|_R$ is admissible. Since $\tau|_R$ is of finite type, it admits an irreducible quotient $\pi$ \textit{i.e.} there exists a non-zero morphism $f : \tau|_R \to \pi$. Let $\phi : R[G] \to R'[G]$ be the obvious inclusion. Then the forgetful functor $\textup{Rep}_{R'}(G) \to \textup{Rep}_R(G)$ has the functor $\textup{Hom}_{R[G]}(R'[G],-)$ as a right adjoint. By adjunction, there corresponds to $f$ a non-zero morphism $f' : \tau \to \textup{Hom}_{R[G]}(R'[G],\pi)$ so $f'$ is injective. Hence $\tau|_R$ is a subrepresentation of
$$(\textup{Hom}_{R[G]}(R'[G],\pi))|_R \simeq \bigoplus_{[R':R]} \pi.$$
So $\tau|_R$ is $\pi$-isotypic with finite multiplicity. We deduce that $\pi$ is admissible because $\tau|_R$ is and the functor of invariants for compact open subgroups is left exact. Therefore $\pi = \pi(\rho,R)$ exists and is admissible.

There remains to show the assertion $\pi(\rho,R) \simeq \tau|_R$ if the extension $R'/R$ has minimal degree. We let $\pi$ denote $\pi(\rho,R)$ below to lighten notations. We have already shown that $\tau|_R$ is $\pi$-isotypic with finite multiplicity. So we simply need to show it is irreducible. By Theorem  \ref{thm:decomposition_scalar_extension}, there exists a subfield $E'$ of $R'$ isomorphic to the centre $E$ of $D = \textup{End}_G(\pi)$ such that $R'/E'$ has degree $m$. In particular
$$\pi \otimes_{E'} R' \simeq m \tau \textup{ and } (\pi \otimes_{E'} R')|_R = \oplus_{[R':E']} \pi = m \pi.$$
Then $m \tau|_R \simeq m \pi$ \textit{i.e.} $\tau|_R \simeq \pi$. \end{proof}

\begin{cor} \label{cor:irreducible_implies_admissible} Let $G$ be a reductive gorup over a non-archimedean local field. Suppose $R$ is a perfect field and $G$ contains open subgroups of invertible pro-order in $R$. Then an irreducible representation in $\textup{Rep}_R(G)$ is admissible. \end{cor}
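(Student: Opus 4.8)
The plan is to reduce the statement to the already-established case of an algebraically closed coefficient field. Over such a field --- and this is where the hypothesis that $G$ is reductive over a non-archimedean local field is essential --- every irreducible smooth representation is admissible, by a theorem of Jacquet in characteristic zero and of Vign\'eras in general \cite[II.2.8]{vig}. So I would fix an irreducible $\pi \in \textup{Rep}_R(G)$, pick $0 \neq v \in \pi$ so that $\pi = R[G]v$, and, using the hypothesis on $G$ together with the smoothness of $\pi$, choose a compact open subgroup $K$ of invertible pro-order in $R$ with $v \in \pi^K$ (such subgroups form a basis of neighbourhoods of the identity: the hypothesis forces the characteristic of $R$ to differ from $p$, and the pro-$p$ congruence subgroups then have invertible pro-order). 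Then $\pi \otimes_R \bar{R}$ is generated over $\bar{R}[G]$ by $v \otimes 1$, hence is a cyclic module.

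Next, I would invoke \cite[II.4.2]{vig}: since $R$ is perfect, $\pi \otimes_R \bar{R}$ is semisimple. A cyclic semisimple module is of finite length, so we may write $\pi \otimes_R \bar{R} \simeq \bigoplus_{i=1}^{n} \rho_i$ with $\rho_i \in \textup{Irr}_{\bar{R}}(G)$. By the theorem quoted above, each $\rho_i$ is admissible. Hence, for \emph{every} compact open subgroup $K'$ of invertible pro-order in $R$, the $\bar{R}$-vector space $\pi^{K'} \otimes_R \bar{R} \simeq (\pi \otimes_R \bar{R})^{K'} \simeq \bigoplus_{i=1}^{n} \rho_i^{K'}$ is finite-dimensional, and therefore $\dim_R \pi^{K'} < \infty$. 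This proves that $\pi$ is admissible.

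Alternatively one could route the conclusion through Lemma \ref{lem:scalar_restriction_uniqueness}. The finite multiset $\{\rho_1,\dots,\rho_n\}$ is stable under $\textup{Gal}_R(\bar{R})$ because $\pi$ is defined over $R$, so that ${}^w(\pi \otimes_R \bar{R}) \simeq \pi \otimes_R \bar{R}$ for all $w$; hence the rationality field of each $\rho_i$ is finite over $R$, and as its Schur index is finite, $\rho_i$ is realisable over a finite extension of $R$. Lemma \ref{lem:scalar_restriction_uniqueness} then yields an admissible irreducible $\pi(\rho_i,R) \in \textup{Rep}_R(G)$ with $\rho_i$ a subquotient of $\pi(\rho_i,R) \otimes_R \bar{R}$. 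But $\rho_i$ is also a subquotient of $\pi \otimes_R \bar{R}$, so restricting scalars shows that $\rho_i|_R$ is at once $\pi$-isotypic and $\pi(\rho_i,R)$-isotypic; being non-zero it forces $\pi \simeq \pi(\rho_i,R)$, which is admissible.

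The only step that is not a formal manipulation of scalar extension and restriction together with the left-exactness of the $K'$-invariants functor is the reduction to the algebraically closed case: the fact that $\pi \otimes_R \bar{R}$ has finite length --- equivalently, that $D = \textup{End}_{R[G]}(\pi)$ is finite-dimensional over $R$ --- \emph{before} one knows that $\pi$ is admissible. This is precisely what is supplied by the perfectness of $R$ via \cite[II.4.2]{vig} and by the classical admissibility theorem over $\bar{R}$, and I expect it to be the crux of the argument.
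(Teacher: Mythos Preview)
Your main line of argument differs from the paper's. The paper does not go through the semisimplicity of $\pi\otimes_R\bar R$ at all; instead it takes a single irreducible quotient $\rho$ of the cyclic module $\pi\otimes_R\bar R$, notes that $\rho$ is admissible by \cite[II.2.8]{vig}, and then invokes \cite[II.4.7]{vig} to guarantee that $\rho$ can be realised over a \emph{finite} extension of $R$ --- this last input is a nontrivial fact specific to reductive $p$-adic groups. Lemma~\ref{lem:scalar_restriction_uniqueness} then produces the irreducible admissible $\pi(\rho,R)$, and its uniqueness clause (that $\rho|_R$ is isotypic) forces $\pi\simeq\pi(\rho,R)$.

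Your route would be more economical if it worked, but the step ``$\pi\otimes_R\bar R$ is semisimple because $R$ is perfect'' is exactly the place where the argument is at risk of circularity --- as you yourself suspected. The purely algebraic statement (a simple module over a perfect field remains semisimple after extension to the algebraic closure) is \emph{false} without a finiteness hypothesis on the endomorphism ring: take $K$ perfect with $\bar K/K$ infinite, let $G=\bar K^{\times}$ with the discrete topology, and let $\pi=\bar K$ with $G$ acting by multiplication; then $\pi$ is smooth and irreducible with $\textup{End}_{K[G]}(\pi)=\bar K$, but $\pi\otimes_K\bar K=\bar K\otimes_K\bar K$ is the regular module of a non-artinian (von Neumann regular) ring, hence not semisimple. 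Finite-dimensionality of $\textup{End}_{R[G]}(\pi)$ is essentially admissibility here, so assuming it is circular. You should check whether \cite[II.4.2]{vig} really delivers semisimplicity in this generality; note that the paper itself only cites it inside the proof of Theorem~\ref{thm:decomposition_scalar_extension}, where admissibility is already assumed. Your alternative via Lemma~\ref{lem:scalar_restriction_uniqueness} is essentially the paper's argument, but your justification that $\rho_i$ is realisable over a finite extension (finite Galois orbit, finite Schur index) leans on the finite length coming from your first paragraph and so inherits the same gap; the paper sidesteps this by citing \cite[II.4.7]{vig} directly.
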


\begin{proof} We first assume $G$ is connected. On the one hand, an irreducible representation $\rho$ in $\textup{Rep}_{\bar{R}}(G)$ is admissible. This well-known fact \cite[Chap. II, 2.8]{vig} is a consequence of the existence of the fact that it is true for cuspidal representations, we then deduce it for all irreducible by the existence of the cuspidal support and by other properties of parabolic induction. On the other hand, any irreducible representation $\rho$ in $\textup{Rep}_{\bar{R}}(G)$ can be realised over a finite extension of $R$ by \cite[Chap. II, 4.7]{vig}. We can apply \ref{lem:scalar_restriction_uniqueness} to conclude.

If $G$ is not connected, then its connected part is a normal subgroup of finte index. The same arguments as earlier apply by defining parabolics and cuspidality in the non-connected case. \end{proof}

\begin{rem} The corollary also applies to covering groups because they have parabolic subgroups, we can define cuspidality and we have the existence of the cuspidal support. \end{rem}

\section{Largest isotpyic quotients}

Let $R$ be a perfect field. We suppose there exists an open subgroup of $G_1 \times G_2$ of invertible pro-order in $R$.

\subsection{} The following theorem generalises the results of \cite{flath} obtained for $R=\mathbb{C}$ and \cite[Th. A.4]{vig_invent} obtained for $R$ is algebraically closed.

\begin{theo} Let $R$ be a perfect field.
\begin{enumerate}[label=\textup{\alph*)}]
\item If $\pi_1 \in \textup{Rep}_R(G_1)$ and $\pi_2 \in \textup{Rep}_R(G_2)$ are two irreducible admissible representations, then $\pi_1 \otimes_R \pi_2 \in \textup{Rep}_R(G_1 \times G_2)$ is a semisimple admissible representation.
\item If $\pi$ is an irreducible admissible representation in $\textup{Rep}_R(G_1 \times G_2)$, there exist irreducible admissible representations $\pi_1$ of $G_1$ and $\pi_2$ of $G_2$ such that $\pi$ is a quotient of $\pi_1 \otimes_R \pi_2$. Moreover $\pi_1$ and $\pi_2$ are unique up to isomorphism \textit{i.e.} $\pi$ determines these isomorphism classes. \end{enumerate} \end{theo}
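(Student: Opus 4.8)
\emph{Strategy.} Both parts are already known when $R$ is algebraically closed: part (b) is \cite[Th.~A.4]{vig_invent}, which generalises \cite{flath}, and part (a) over $\bar{R}$ is then immediate from Lemma~\ref{lem:tensor-product-of-reps-and-subreps}, since for absolutely irreducible admissible $\rho_1 \in \textup{Rep}_{\bar{R}}(G_1)$ and $\rho_2 \in \textup{Rep}_{\bar{R}}(G_2)$ one has $\textup{End}_{\bar{R}[G_1 \times G_2]}(\rho_1 \otimes_{\bar{R}} \rho_2) \cong \bar{R}$, so the lattice of subrepresentations of $\rho_1 \otimes_{\bar{R}} \rho_2$ is trivial and the tensor is irreducible. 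The plan is therefore to bootstrap from the algebraically closed case using the scalar extension/restriction machinery of the previous two sections, namely Theorem~\ref{thm:decomposition_scalar_extension} and Lemma~\ref{lem:scalar_restriction_uniqueness}, together with the fact that, as $R$ is perfect, $\bar{R}/R$ is Galois and semisimplicity of an admissible representation can be checked after base change to $\bar{R}$.

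\emph{Part (a).} Admissibility is clear: the subgroups $K_1 \times K_2$ ($K_i$ compact open in $G_i$) are cofinal among the compact open subgroups of $G_1 \times G_2$, and $(\pi_1 \otimes_R \pi_2)^{K_1 \times K_2} = \pi_1^{K_1} \otimes_R \pi_2^{K_2}$ is finite dimensional. For semisimplicity I would base change to $\bar{R}$: by Theorem~\ref{thm:decomposition_scalar_extension} each $\pi_i \otimes_R \bar{R}$ is a finite multiplicity-sum of absolutely irreducible admissible representations of $G_i$, so $(\pi_1 \otimes_R \pi_2) \otimes_R \bar{R} \cong (\pi_1 \otimes_R \bar{R}) \otimes_{\bar{R}} (\pi_2 \otimes_R \bar{R})$ is a finite direct sum of exterior tensor products of absolutely irreducibles, each of which is irreducible by the case above; hence it is semisimple, and since $\bar{R}/R$ is separable, $\pi_1 \otimes_R \pi_2$ is semisimple. (Alternatively, Lemma~\ref{lem:tensor-product-of-reps-and-subreps} identifies $\textup{End}_{R[G_1 \times G_2]}(\pi_1 \otimes_R \pi_2)$ with $D_1 \otimes_R D_2$, where $D_i = \textup{End}_{R[G_i]}(\pi_i)$ is a finite-dimensional division $R$-algebra; as $R$ is perfect each $D_i$ is a separable $R$-algebra, so $D_1 \otimes_R D_2$ is semisimple and the lattice of subrepresentations of $\pi_1 \otimes_R \pi_2$ is complemented.)

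\emph{Part (b).} I would again pass to $\bar{R}$. By Theorem~\ref{thm:decomposition_scalar_extension}, $\pi \otimes_R \bar{R} \cong m \bigoplus_w \rho_w$ with the $\rho_w$ absolutely irreducible admissible, forming a single finite $\textup{Gal}_R(\bar{R})$-orbit, and realised over the finite extension $R'$ of $R$ produced in that theorem. Applying the algebraically closed case to $\rho := \rho_{w_0}$ gives $\rho \cong \alpha \otimes_{\bar{R}} \beta$ with $\alpha \in \textup{Rep}_{\bar{R}}(G_1)$ and $\beta \in \textup{Rep}_{\bar{R}}(G_2)$ irreducible admissible and uniquely determined. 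Granting, as discussed below, that $\alpha$ and $\beta$ are realisable over finite extensions of $R$, Lemma~\ref{lem:scalar_restriction_uniqueness} and Theorem~\ref{thm:decomposition_scalar_extension} furnish irreducible admissible $\pi_1 := \pi(\alpha, R) \in \textup{Rep}_R(G_1)$ and $\pi_2 := \pi(\beta, R) \in \textup{Rep}_R(G_2)$ with $\pi_i \otimes_R \bar{R}$ a multiplicity-sum over the Galois orbit of $\alpha$, resp.\ of $\beta$. By part (a), $\pi_1 \otimes_R \pi_2$ is semisimple admissible, and $(\pi_1 \otimes_R \pi_2) \otimes_R \bar{R} \cong m_1 m_2 \bigoplus_{\sigma, \sigma'} {}^{\sigma}\alpha \otimes_{\bar{R}} {}^{\sigma'}\beta$ contains $\rho = \alpha \otimes_{\bar{R}} \beta$, hence every $\rho_w$, hence $\pi \otimes_R \bar{R}$ as a direct summand; since $\pi$ is the unique irreducible admissible $R$-representation with $\rho$ a constituent of $\pi \otimes_R \bar{R}$ (Lemma~\ref{lem:scalar_restriction_uniqueness}), $\pi$ is a direct summand, in particular a quotient, of $\pi_1 \otimes_R \pi_2$. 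For uniqueness, if $\pi$ is also a quotient of $\pi_1' \otimes_R \pi_2'$ with $\pi_i'$ irreducible admissible, then $\rho$ is a constituent of $(\pi_1' \otimes_R \bar{R}) \otimes_{\bar{R}} (\pi_2' \otimes_R \bar{R})$, which by Theorem~\ref{thm:decomposition_scalar_extension} and part (a) is a semisimple sum of exterior tensors ${}^{\sigma}\alpha_1' \otimes_{\bar{R}} {}^{\sigma'}\alpha_2'$ of absolutely irreducibles; uniqueness over $\bar{R}$ forces $\alpha \cong {}^{\sigma}\alpha_1'$ and $\beta \cong {}^{\sigma'}\alpha_2'$ for some $\sigma, \sigma'$, so $\alpha$ is a constituent of $\pi_1' \otimes_R \bar{R}$ and $\beta$ of $\pi_2' \otimes_R \bar{R}$, whence $\pi_1' \cong \pi(\alpha, R) = \pi_1$ and $\pi_2' \cong \pi(\beta, R) = \pi_2$ by the uniqueness clause of Lemma~\ref{lem:scalar_restriction_uniqueness}.

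\emph{The main obstacle} is the one input invoked without proof above: that the $\bar{R}$-factors $\alpha, \beta$ of $\rho$ are realisable over finite extensions of $R$. When $G_1, G_2$ are reductive $p$-adic groups, the situation relevant to theta lifts, this is automatic, since over $\bar{R}$ every irreducible representation is admissible (Corollary~\ref{cor:irreducible_implies_admissible}, or \cite[Chap.~II, 2.8]{vig}) and is defined over a finite extension of $R$ \cite[Chap.~II, 4.7]{vig}. For general locally profinite $G_i$ one should instead deduce it from $\pi$ itself: the semisimple representation $\pi|_{G_1}$ has its $G_1$-isotypic components stable under $G_1 \times G_2$, because $G_2$ commutes with $G_1$ and hence permutes these components preserving isomorphism type, so by irreducibility of $\pi$ it is $\pi_1$-isotypic for a single irreducible $\pi_1 \in \textup{Rep}_R(G_1)$; then $\pi_1 \otimes_R \bar{R}$, a direct summand of $(\pi \otimes_R \bar{R})|_{G_1}$, is a direct sum of copies of the $\alpha_w$, which are realised over $R'$, forcing $\alpha$ to be realisable over a finite extension of $R$, and symmetrically for $\beta$. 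Verifying that the $\pi_i$ obtained this way are admissible and agree with $\pi(\alpha, R)$ and $\pi(\beta, R)$ — equivalently, repackaging existence through the surjection $\pi_1 \otimes_R \textup{Hom}_{R[G_1]}(\pi_1, \pi) \twoheadrightarrow \pi$ in the style of \cite{flath} — is where the remaining technical care lies and is the principal difficulty in a fully general statement.
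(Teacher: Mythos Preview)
For part (a), your parenthetical alternative via Lemma~\ref{lem:tensor-product-of-reps-and-subreps} \emph{is} the paper's proof: one shows the finite-dimensional $R$-algebra $D_1 \otimes_R D_2$ is semisimple by observing that its centre $E_1 \otimes_R E_2$ is reduced, the $E_i$ being separable field extensions of the perfect field $R$. Your primary route (base-change to $\bar R$ and descend semisimplicity) is also valid but more roundabout.

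For part (b) your route is genuinely different. The paper does not pass to $\bar R$ at all; it simply asserts that the Flath--Vign\'eras argument \cite[Th.~A.4]{vig_invent}, whose algebraic core is \cite[A~VIII.208]{bou}, runs verbatim over the perfect field $R$: for suitable $K_1,K_2$ the finite-dimensional space $\pi^{K_1\times K_2}$ is a simple module over $\mathcal H_R(G_1,K_1)\otimes_R\mathcal H_R(G_2,K_2)$, and Bourbaki's structure theorem for simple modules over a tensor product of algebras exhibits it as a quotient of $M_1\otimes_R M_2$ with $M_i$ simple; globalising via the usual Hecke-module correspondence produces $\pi_1,\pi_2$. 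This never leaves $R$ and never needs the $\bar R$-factors $\alpha,\beta$ to be realisable over a finite extension.

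Your descent argument is fine when $G_1,G_2$ are reductive $p$-adic groups, where \cite[II.4.7]{vig} supplies realisability, and that covers everything the paper needs later. But for the theorem as stated, the gap you yourself flag is genuine: knowing that $\pi|_{G_1}$ is $\pi_1$-isotypic does not by itself force $\pi_1$ to be admissible, equivalently does not force $\textup{End}_{R[G_1]}(\pi_1)$ to be finite-dimensional, so you cannot yet invoke Theorem~\ref{thm:decomposition_scalar_extension} or Lemma~\ref{lem:scalar_restriction_uniqueness} on $\pi_1$ to close the loop. The direct Bourbaki argument sidesteps this entirely by working with the finite-dimensional Hecke module $\pi^{K_1\times K_2}$ from the start.
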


\begin{proof} a) By Lemma \ref{lem:tensor-product-of-reps-and-subreps}, it is equivalent to show that $D= \textup{End}_{G_1}(\pi_1) \otimes_R \textup{End}_{G_2}(\pi_2)$ is a semisimple $R$-algebra. Since $D$ has finite dimension over $R$ by admissibility of $\pi_1$, we will simply prove that its centre is reduced. However, its centre is $E_1 \otimes_R E_2$ where $E_1$ and $E_2$ are the centres of the respective endomorphism algebras. Because $R$ is perfect, these finite extensions $E_1$ and $E_2$ are separable, which ensures $E_1 \otimes_R E_2$ is reduced.

\noindent b) Same proof as \cite[Th A.4]{vig}, which follows \cite{flath} and uses \cite[A VIII.208]{bou}. \end{proof}

In the second point above, the representation $\pi$ can happen to be a strict quotient of $\pi_1 \otimes_R \pi_2$ since the latter is not necessarily irreducible. We give a sufficient condition so that the tensor product of two irreducible representations is irreducible.

\begin{lem} \label{lem:produit_tensoriel_factorisation}Suppose the representations $\pi_1$ and $\pi_2$ are irreducible admissible and there exists a non-zero morphism of $R$-algebras $D_2 \to D_1^\textup{op}$. Then $\pi_1 \otimes_{D_2} \pi_2 \in \textup{Rep}_R(G_1 \times G_2)$ is an irreducible admissible representation. \end{lem}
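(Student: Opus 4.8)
The plan is to prove that $\pi_1\otimes_{D_2}\pi_2$ is a nonzero semisimple admissible representation whose endomorphism ring is a division ring, and then to invoke the elementary fact that a nonzero semisimple module with division endomorphism ring is simple.

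First I would note that, $D_2$ being a division algebra, a nonzero $R$-algebra morphism $\phi\colon D_2\to D_1^{\textup{op}}$ is automatically injective; it turns $\pi_1$ into a right $D_2$-module (compatibly with $G_1$, since it acts through $D_1=\textup{End}_{R[G_1]}(\pi_1)$), so that $\pi_1\otimes_{D_2}\pi_2$ is a well-defined object of $\textup{Rep}_R(G_1\times G_2)$, and it is a quotient of $\pi_1\otimes_R\pi_2$ via the obvious $G_1\times G_2$-equivariant surjection. By the theorem above, $\pi_1\otimes_R\pi_2$ is semisimple and admissible, hence so is its quotient $\pi_1\otimes_{D_2}\pi_2$, which is moreover nonzero since $\pi_2$ is free over $D_2$ and $\pi_1\neq 0$. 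It thus remains to compute $\textup{End}_{R[G_1\times G_2]}(\pi_1\otimes_{D_2}\pi_2)$ and to check that it is a division ring.

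\textbf{The heart of the matter is this endomorphism computation.} Picking a basis $(e_b)_{b\in B}$ of the free left $D_2$-module $\pi_2$ identifies $\pi_1\otimes_{D_2}\pi_2$ with $\bigoplus_{b\in B}\pi_1$ as $R[G_1]$-modules, with $G_1$ acting diagonally; under this identification the $G_2$-action is implemented by the operators $T\mapsto\textup{id}_{\pi_1}\otimes T$ for $T\in\textup{End}_{D_2}(\pi_2)$, which on $\bigoplus_B\pi_1$ are $B\times B$ matrices with entries in $\phi(D_2)\subseteq D_1$. Since $\pi_1$ is irreducible, hence cyclic, $\textup{End}_{R[G_1]}(\bigoplus_B\pi_1)$ is the algebra of column-finite matrices over $D_1$; so an element of $\textup{End}_{R[G_1\times G_2]}(\pi_1\otimes_{D_2}\pi_2)$ is exactly such a matrix commuting with the image of $G_2$. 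Now $\pi_2$ is a simple $R[G_2]$-module, so by the Jacobson density theorem the image of $R[G_2]$ is dense in $\textup{End}_{D_2}(\pi_2)$ for the finite topology; a routine density/local-constancy argument then shows that such a matrix in fact commutes with all of $\textup{End}_{D_2}(\pi_2)$ acting through $\phi$. Commuting with the matrix units forces it to be a scalar matrix $c\cdot\textup{Id}$ with $c\in D_1$, and commuting with the scalar operators coming from $D_2$ forces $c\in C_{D_1}(\phi(D_2))$; conversely every such $c$ yields an endomorphism. Hence $\textup{End}_{R[G_1\times G_2]}(\pi_1\otimes_{D_2}\pi_2)\cong C_{D_1}(\phi(D_2))$.

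Since $C_{D_1}(\phi(D_2))$ is the centraliser of a subset of the division ring $D_1$, it is itself a division ring, so $\pi_1\otimes_{D_2}\pi_2$ is a nonzero semisimple representation with division endomorphism ring, therefore irreducible, and admissible by the first step. \textbf{The one delicate point} is the density argument upgrading ``commutes with $G_2$'' to ``commutes with the whole matrix algebra over $\phi(D_2)$''; everything else is bookkeeping with one-sided module structures. One could equivalently argue through Lemma~\ref{lem:tensor-product-of-reps-and-subreps}, identifying $\pi_1\otimes_{D_2}\pi_2$ with $D/D_0$, where $D=D_1\otimes_R D_2$ and $D_0$ is the right ideal generated by the elements $\phi(d_2)\otimes 1-1\otimes d_2$, and checking that $\textup{End}_D(D/D_0)$ is the same centraliser, so that $D_0$ is a maximal right ideal.
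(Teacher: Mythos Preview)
Your argument is correct and genuinely different from the paper's. The paper proves irreducibility directly: given a nonzero $v=\sum v_1^i\otimes_{D_2} v_2^i$ with the $v_2^i$ chosen $D_2$-free, it uses Jacobson density to find $f_2\in R[G_2]$ acting as the projection onto $v_2^1$, so that $f_2\cdot v=v_1^1\otimes v_2^1$ lies in the submodule generated by $v$; irreducibility of $\pi_1$ then gives $V_1\otimes_{D_2} v_2^1\subseteq\langle v\rangle$, and irreducibility of $\pi_2$ finishes. Your route instead leverages the preceding theorem to get semisimplicity for free, and then reduces irreducibility to the computation $\textup{End}_{R[G_1\times G_2]}(\pi_1\otimes_{D_2}\pi_2)\cong C_{D_1}(\phi(D_2))$, a division ring. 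Both proofs hinge on the same density fact; yours packages it as ``commutes with $G_2$ $\Rightarrow$ commutes with all of $\textup{End}_{D_2}(\pi_2)$'', while the paper uses it to produce a single well-chosen $f_2$. The paper's argument is shorter and does not invoke the semisimplicity theorem for irreducibility; your argument yields the endomorphism ring explicitly, which is extra information the paper does not record. One small point worth making explicit in your write-up: the scalar operators $\phi(d)\cdot\textup{Id}$ you invoke are not in the image of $R[G_2]$ but only in its density closure, so their commutation with $M$ really does rely on the density step you flagged, not on a separate observation.
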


\begin{proof} Let $v = \sum v_1^i \otimes_{D_2} v_2^i \in \pi_1 \otimes_{D_2} \pi_2$. Since $D_2= \textup{End}_{G_2}(\pi_2)$ is a division algebra, we can choose a finite family $v_2^i$ that is free over $D_2$ and such that $v = \sum v_1^i \otimes_{D_2} v_2^i$ with $v_1^i \neq 0$. We are going to show that $V_1 \otimes_{D_2} v_2^1$ is contained in the subrepresentation generated by $v$, which is enough to show that $v$ generates $\pi_1 \otimes_{D_2} \pi_2$. To do so, we choose an element $f_2 \in R[G_2]$ such that
$$\pi_2(f_2) v_2^i = \left\{ \begin{array}{cc} 
0 & \text{ if } i\neq 1 \\
v_2^1 & \text{ if } i=1 \end{array} \right. .$$
Such an element always exists. Indeed, let $A$ be the image of $R[G_2] \to \textup{End}_R(\pi_2)$. Then $A$ is contained in $\textup{End}_{\textup{End}_{G_2}(\pi_2)}(\pi_2) = \textup{End}_{\textup{comm}(A)}(\pi_2) = \textup{comm}(\textup{comm}(A))$ where $\textup{comm}$ means the centraliser of an algebra in $\textup{End}_R(\pi_2)$. We consider the subspace $W_2$ of $\pi_2$ generated by the $v_2^i$'s. The map we want to interpolate is the projection on $v_2^1$ with kernel generated by the other $v_2^i$'s. In particular, there exists $b \in \textup{End}_{D_2}(\pi_2) = \textup{comm}(\textup{comm}(A))$ whose restriction $b|_{W_2}$ to $W_2$ realises this projection. As the family $v_2^i$ is finite, there exists $a \in A$ such that $a|_{W_2} = b|_{W_2}$ by \cite[Chap. I, B.6]{vig}. Hence the existence of $f_2$. So $V_1 \otimes_{D_2} v_2^1$ is contained in the subrepresentation generated by $v$. \end{proof}

\subsection{} The following two lemmas generalise \cite[Chap. 2, Lem. III.3 \& Lem. III.4]{mvw}. The modifications in the proofs are very minor, by considering tensor products over division algebras instead of algebraically closed fields, so we omit the proofs.

\begin{lem} Let $(\pi_1,V_1) \in \textup{Rep}_R(G_1)$ be an irreducible admissible representaion. Let $(\pi_2,V_2) \in \textup{Rep}_R(G_2)$. Suppose $V_2$ is endowed with a structure of right $D_1$-module compatible with the action of $G_2$, this means we have a morphism of $R$-algebras
$$D_1 \to D_2^{\textup{op}}.$$
Let $V \in \textup{Rep}_R(G_1 \times G_2)$ be a subrepresentation in $V_2 \otimes_{D_1} V_1$. There exists a subrepresentation $V_2'$ of $V_2$, endowed with a structure of right $D_1$-module, such that $V = V_2' \otimes_{D_1} V_1$ in $\textup{Rep}_R(G_1 \times G_2)$. \end{lem}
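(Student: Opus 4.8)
The plan is to mimic the structure of Lemma \ref{lem:tensor-product-of-reps-and-subreps}, which already establishes that subrepresentations of $V_1 \otimes_R V_2$ correspond to right sub-$D$-modules of $D = D_1 \otimes_R D_2$ (with $D_2 = \textup{End}_{R[G_2]}(\pi_2)$ when $\pi_2$ is irreducible), and adapt it to the relative tensor product $V_2 \otimes_{D_1} V_1$ where now $\pi_1$ is the only representation assumed irreducible. First I would record the underlying module-theoretic picture: since $(\pi_1,V_1)$ is irreducible and admissible, $D_1$ is a finite-dimensional division $R$-algebra by Schur's lemma, and $V_1$ is a $(D_1 \otimes_R R[G_1])$-module that, as a left $D_1$-module, is a (possibly infinite-dimensional) vector space over the division ring $D_1$. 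The $G_2$-action on $V_2$ together with the right $D_1$-module structure makes $V_2$ a module over $D_1^{\textup{op}} \otimes_R R[G_2]$, and the two actions commute, so $V = V_2 \otimes_{D_1} V_1$ carries the diagonal $G_1 \times G_2$-action. The claim is that the correspondence $V_2' \mapsto V_2' \otimes_{D_1} V_1$ is a bijection between $(D_1^{\textup{op}}, G_2)$-submodules of $V_2$ and $(G_1 \times G_2)$-subrepresentations of $V$.

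The key step is injectivity of this map together with the fact that every $G_1 \times G_2$-subrepresentation $V$ of $V_2 \otimes_{D_1} V_1$ is of the claimed form. For surjectivity, I would argue as follows. Fix a nonzero $v_1^0 \in V_1$; since $V_1$ is an irreducible smooth $R[G_1]$-module, there is $f_1 \in \mathcal{H}_R(G_1)$ realising any prescribed finite $D_1$-linear functional behaviour on a given finite $D_1$-free family inside $V_1$ — this is precisely the Jacobson density / bicommutant input used in the proof of Lemma \ref{lem:produit_tensoriel_factorisation} via \cite[Chap. I, B.6]{vig}, now applied over the division ring $D_1 = \textup{comm}(A)$ with $A$ the image of $\mathcal{H}_R(G_1)$ in $\textup{End}_R(V_1)$. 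Given $v = \sum_i v_2^i \otimes_{D_1} v_1^i \in V$ with the $v_1^i$ chosen $D_1$-linearly independent, acting by such an $f_1$ picks out a single term $v_2^j \otimes_{D_1} v_1^j$, and then $v_2^j \otimes_{D_1} V_1$ lies in the subrepresentation generated by $v$ (using the $G_1$-action to move $v_1^j$ around inside the irreducible $V_1$, and that $v_1^j$ generates $V_1$ over $R[G_1]$). Hence the ``$V_2$-component'' of any element of $V$ stays inside $V$: setting $V_2'$ to be the set of all $v_2 \in V_2$ such that $v_2 \otimes_{D_1} v_1^0 \in V$ for a fixed $v_1^0$, one checks $V_2'$ is a $G_2$-stable right $D_1$-submodule and $V = V_2' \otimes_{D_1} V_1$. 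Injectivity follows since $V_1$ is faithfully flat, indeed free, as a left $D_1$-module, so $V_2' \otimes_{D_1} V_1$ recovers $V_2'$.

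I would then note the admissibility/smoothness bookkeeping is routine: $V_2' \subseteq V_2$ is automatically smooth as a $G_2$-subrepresentation, and the relative tensor product is formed exactly as the original ambient $V$. The one point requiring a little care — and the main obstacle — is justifying the ``interpolation'' step over the division ring $D_1$ rather than over a field: one must make sure that the density theorem applies to the $A$-module $V_1$ where $A = \textup{comm}(D_1)$ inside $\textup{End}_R(V_1)$ and $D_1 = \textup{End}_A(V_1)$, so that $A$ acts densely on $V_1$ as a $D_1$-module and any $D_1$-linear endomorphism of a finite-dimensional (over $D_1$) submodule is realised by an element of $A$, hence by an element of $\mathcal{H}_R(G_1)$. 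This is exactly the mechanism already invoked in Lemma \ref{lem:produit_tensoriel_factorisation}, so the proof here is ``the same proof with the roles of $G_1$ and $G_2$ adjusted'', which is presumably why the authors say the modifications are very minor and omit it; my written proof would simply spell out the $V_2'$ construction and cite \cite[Chap. I, B.6]{vig} for the density input and Lemma \ref{lem:tensor-product-of-reps-and-subreps} for the structural framework.
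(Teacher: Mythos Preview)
Your proposal is correct and aligns with what the paper indicates: the paper omits the proof, stating that it is the same as \cite[Chap.~2, Lem.~III.3]{mvw} with the minor modification of taking tensor products over the division algebra $D_1$ rather than over an algebraically closed field. Your argument does precisely this---using the Jacobson density input \cite[Chap.~I, B.6]{vig} exactly as in Lemma~\ref{lem:produit_tensoriel_factorisation} to isolate individual tensor factors over $D_1$, and then defining $V_2'$ as the set of $v_2$ with $v_2 \otimes_{D_1} v_1^0 \in V$---so there is nothing further to add.
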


When the field $R$ is algebraically closed, the results above simplify significantly because $D_1$ and $D_2$ are simply $R$. The result below generalises the usual largest isotypic quotient, which has has a simpler form again over algebraically closed fields.

\begin{lem} \label{lem:pi_coinvariant} Let $(\pi,V) \in \emph{Rep}_R(G_1 \times G_2)$. Let $(\pi_1,V_1) \in \textup{Rep}_R(G_1)$ be irreducible admissible.
\begin{itemize}[label=$\bullet$]
\item We define a subrepresentation of $V$ by
$$V[\pi_1]= \bigcap_{f \in \emph{Hom}_{G_1}(V,V_1)} \emph{Ker} (f) \in \textup{Rep}_R(G_1 \times G_2).$$
The largest $\pi_1$-isotypic quotient of $V$ is the representation
$$V_{\pi_1} = V / V[\pi_1] \in \textup{Rep}_R(G_1 \times G_2).$$ 
\item There exists an $\mathcal{H}_R(G_2)-D_1$-bimodule $(\pi_2,V_2)$, unique up to isomorphism, such that
$$V_{\pi_1} \simeq \pi_2 \otimes_{D_1} \pi_1.$$
Moreover, we have an isomorphism of $\mathcal{H}_R(G_2)-D_1$-bimodule :
$$V_2 \simeq (V \otimes_R \textup{Hom}_{D_1}(V_1,D_1)^{\infty})_{1_{G_1}}.$$ \end{itemize} \end{lem}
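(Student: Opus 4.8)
The plan is to follow \cite[Chap. 2, Lem. III.4]{mvw} line by line, systematically replacing the role played there by the algebraically closed field $\bar{\mathbb{Q}}$ (for which $\textup{End}_{G_1}(V_1)=\bar{\mathbb{Q}}$) by the division algebra $D_1=\textup{End}_{R[G_1]}(V_1)$, and ordinary smooth duals by the $D_1$-dual $V_1^\vee:=\textup{Hom}_{D_1}(V_1,D_1)^\infty$, which is a smooth $G_1$-representation carrying a commuting right $D_1$-action. Two facts will be used repeatedly: first, that over the division algebra $D_1$ every module is free and injective, so that $\textup{Hom}_{D_1}(-,D_1)$ is exact and a right $D_1$-module is determined by its $D_1$-dual up to $D_1$-dimension; second, that since $V_1$ is admissible, so is $V_1^\vee$, and the $D_1$-biduality morphism $V_1\to(V_1^\vee)^\vee$ is an isomorphism — both of which reduce to the corresponding statements for the finite-dimensional $D_1$-modules $V_1^K$.

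For the first bullet, I would first note that $V[\pi_1]$ is a $G_1$-subrepresentation, being an intersection of kernels of $G_1$-morphisms, and that it is moreover $G_2$-stable because $G_1$ and $G_2$ commute: for $f\in\textup{Hom}_{G_1}(V,V_1)$ and $g_2\in G_2$, the map $v\mapsto f(g_2v)$ again lies in $\textup{Hom}_{G_1}(V,V_1)$. Hence $V[\pi_1]$ and $V_{\pi_1}=V/V[\pi_1]$ lie in $\textup{Rep}_R(G_1\times G_2)$. Every $G_1$-morphism $V\to V_1$ kills $V[\pi_1]$ by construction, so $\textup{Hom}_{G_1}(V_{\pi_1},V_1)=\textup{Hom}_{G_1}(V,V_1)=:H$ and the evaluation morphism embeds $V_{\pi_1}$ as a smooth $G_1$-subrepresentation of $\prod_{f\in H}V_1$. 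The one point requiring care over a non-closed field is that a smooth $G_1$-subrepresentation of a product of copies of the admissible irreducible $V_1$ is a direct sum of copies of $V_1$; I would deduce this by passing to $K$-invariants for a small compact open $K$ and using that $\mathcal{H}_R(G_1,K)$ acts on the finite-dimensional space $V_1^K$ through a simple Artinian quotient (Jacobson density), over which every module is semisimple. This gives $V_{\pi_1}\simeq\bigoplus_I V_1$ as $G_1$-representations; the universal property is then immediate, since for any $\pi_1$-isotypic quotient $q:V\twoheadrightarrow\bigoplus_J V_1$ the components $p_jq$ lie in $H$, so $\ker q=\bigcap_j\ker(p_jq)\supseteq V[\pi_1]$ and $q$ factors through $V_{\pi_1}$.

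For the second bullet, I would set $V_2:=\textup{Hom}_{G_1}(V_1,V_{\pi_1})$, a right $D_1$-module under precomposition carrying the smooth $G_2$-action induced from $V_{\pi_1}$ (smooth because $V_1$ is finitely generated and $G_2$ commutes with $G_1$), the two actions commuting. Since $V_1$ is a finitely generated irreducible, $\textup{Hom}_{G_1}(V_1,\bigoplus_I V_1)=D_1^{(I)}$ and $D_1^{(I)}\otimes_{D_1}V_1=\bigoplus_I V_1$, so the evaluation $V_2\otimes_{D_1}V_1\to V_{\pi_1}$ is an isomorphism of $G_1\times G_2$-representations; uniqueness follows by applying $\textup{Hom}_{G_1}(V_1,-)$ to any other factorisation $V_{\pi_1}\simeq V_2'\otimes_{D_1}V_1$ and using $\textup{Hom}_{G_1}(V_1,V_2'\otimes_{D_1}V_1)\simeq V_2'$ (valid for $V_2'$ free over $D_1$, hence for any $V_2'$), compatibly with all structures. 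It then remains to identify $V_2$ with $W:=(V\otimes_R V_1^\vee)_{1_{G_1}}$. The surjection $V\twoheadrightarrow V_{\pi_1}$ induces, after applying the right-exact functors $-\otimes_R V_1^\vee$ and $(-)_{1_{G_1}}$, a natural surjection $\Phi:W\twoheadrightarrow(V_{\pi_1}\otimes_R V_1^\vee)_{1_{G_1}}$; and since $V_{\pi_1}\simeq\bigoplus_I V_1$ while the evaluation $(V_1\otimes_R V_1^\vee)_{1_{G_1}}\to D_1$, $v\otimes\phi\mapsto\phi(v)$, is a surjective $(D_1,D_1)$-bimodule map whose $D_1$-dual computes to $\textup{Hom}_{G_1}(V_1,(V_1^\vee)^\vee)=\textup{Hom}_{G_1}(V_1,V_1)=D_1$ and is therefore an isomorphism, one gets $(V_{\pi_1}\otimes_R V_1^\vee)_{1_{G_1}}\simeq\bigoplus_I D_1\simeq V_2$. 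So $\Phi:W\twoheadrightarrow V_2$, and the final point is its injectivity: here I would apply $\textup{Hom}_{D_1}(-,D_1)$ and use the tensor--hom and coinvariants adjunctions together with $(V_1^\vee)^\vee\simeq V_1$ to compute both $\textup{Hom}_{D_1}(W,D_1)\simeq\textup{Hom}_{G_1}(V,V_1)$ and $\textup{Hom}_{D_1}(V_2,D_1)\simeq\textup{Hom}_{G_1}(V_{\pi_1},V_1)=\textup{Hom}_{G_1}(V,V_1)$, under which $\Phi$ induces an isomorphism; applying the exact functor $\textup{Hom}_{D_1}(-,D_1)$ to $0\to\ker\Phi\to W\to V_2\to 0$ then forces $\textup{Hom}_{D_1}(\ker\Phi,D_1)=0$, hence $\ker\Phi=0$. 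All the isomorphisms above are natural in $V$ and in $V_1^\vee$, hence compatible with the $\mathcal{H}_R(G_2)$-action (carried by $V$, resp.\ $V_{\pi_1}$) and the right $D_1$-action (carried by $V_1^\vee$, resp.\ by precomposition), which I would check by tracking the structures through each step.

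The hard part is not any single step but the bookkeeping that makes the split-case argument survive over $D_1$: keeping straight which $D_1$-module structures are left and which are right through every hom--tensor and coinvariants manipulation, and isolating the two inputs with no analogue over an algebraically closed field — the $D_1$-biduality $(V_1^\vee)^\vee\simeq V_1$, which rests on the admissibility of $V_1$, and the injectivity of $D_1$ as a $D_1$-module, which is exactly what upgrades the natural surjection $\Phi$ to the isomorphism asserted in the statement. The structure-theory input in the first bullet (a smooth subrepresentation of a product of copies of $V_1$ is isotypic) is the other place where one cannot merely transcribe the complex-analytic argument, but it is handled just as in \cite[Chap. 2, \S III]{mvw}.
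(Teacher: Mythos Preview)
Your proposal is correct and follows precisely the approach the paper indicates: the paper omits the proof entirely, stating only that it is obtained from \cite[Chap.~2, Lem.~III.4]{mvw} by the ``very minor'' modification of replacing tensor products over the algebraically closed field by tensor products over the division algebra $D_1$. You have carried this out in detail, correctly identifying the two points where the division-algebra setting requires genuine care---the $D_1$-biduality $(V_1^\vee)^\vee\simeq V_1$ from admissibility, and the injectivity/faithfulness of $\textup{Hom}_{D_1}(-,D_1)$ over a division ring---so your write-up is in fact more complete than the paper's own treatment.
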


We can also consider several largest isotypic quotients at once. The proof of the result below is rather straightforward for two irreducible representations, and the general case follows by a simple induction argument, so the proof is left as an exercise for the reader:

\begin{cor} \label{cor:finite_family_of_pi1_coinvariants} Let $(\pi_{1,i})_{i \in I}$ be a finite family of non-isomorphic irreducible representations. For $V \in \textup{Rep}_R(G_1 \times G_2)$, we denote by $p_{\pi_{1,i}}$ the projection $V \mapsto V_{\pi_{1,i}}$. Then
$$p_I : v \in V \mapsto (p_{\pi_{1,i}}(v) )_{i\in I} \in \oplus_{i\in I} V_{\pi_1^i}$$
is surjective and has kernel
$$\bigcap_{i \in I} V[\pi_{1,i}] = \bigcap_{f \in \textup{Hom}_{G_1}(V, \oplus_{i \in I} \pi_{1,i})} \textup{Ker} (f).$$ \end{cor}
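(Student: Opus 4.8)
The plan is to reduce the statement about the finite family to the case $|I| = 2$, and then to the single-quotient description already established in Lemma \ref{lem:pi_coinvariant}. First I would fix notation: write $W_i = V[\pi_{1,i}] = \bigcap_{f \in \textup{Hom}_{G_1}(V,\pi_{1,i})} \textup{Ker}(f)$, so that $V_{\pi_{1,i}} = V/W_i$ and $p_{\pi_{1,i}}$ is the quotient map. The kernel of $p_I$ is by definition $\bigcap_{i \in I} W_i$, so the identity with $\bigcap_{f \in \textup{Hom}_{G_1}(V,\oplus_i \pi_{1,i})} \textup{Ker}(f)$ is the easy half: any $f : V \to \oplus_i \pi_{1,i}$ is a tuple $(f_i)$ with $f_i : V \to \pi_{1,i}$, and $\textup{Ker}(f) = \bigcap_i \textup{Ker}(f_i)$; intersecting over all such $f$ recovers $\bigcap_i W_i$ because $\textup{Hom}_{G_1}(V,\pi_{1,i})$ is spanned by the components of such tuples (take $f = (0,\dots,f_i,\dots,0)$). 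So the only real content is the surjectivity of $p_I$, equivalently that $W_i + \bigcap_{j \neq i} W_j = V$ for each $i$, i.e. that the subrepresentations $W_i$ are pairwise ``coprime'' in the appropriate sense.

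For the surjectivity, I would argue as follows. The quotient $V/W_i = V_{\pi_{1,i}} \simeq \pi_{2,i} \otimes_{D_{1,i}} \pi_{1,i}$ by Lemma \ref{lem:pi_coinvariant}, so as a $G_1$-representation it is $\pi_{1,i}$-isotypic (a sum of copies of $\pi_{1,i}$, after base change to $\bar R$, or more precisely $\pi_{1,i}$-isotypic over $D_{1,i}$). Consider the natural map $q : V \to \bigoplus_{i \in I} V/W_i$; its image is a $G_1 \times G_2$-subrepresentation, and I claim it is everything. It suffices to check this after applying the forgetful functor to $\textup{Rep}_R(G_1)$ and even after $-\otimes_R \bar R$, reducing to the case $R$ algebraically closed; then each $V/W_i$ is genuinely $\pi_{1,i}$-isotypic. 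The point is that $\textup{im}(q)$ is a subrepresentation of $\bigoplus_i V/W_i$ which surjects onto each factor (since $V \twoheadrightarrow V/W_i$ already), and the $V/W_i$ are isotypic for pairwise non-isomorphic irreducibles $\pi_{1,i}$; by Goursat/CRT for semisimple modules over $\bar R[G_1]$ (or directly: a submodule of a direct sum of isotypic modules of pairwise distinct types that projects onto each summand equals the whole sum, because $\textup{Hom}_{G_1}(V/W_i, V/W_j) = 0$ for $i \neq j$), the image is the full direct sum. One has to be a little careful because $V/W_i$ need not be semisimple as a $G_1 \times G_2$-module, but as a $G_1$-module it is a (possibly infinite) sum of copies of $\pi_{1,i}$, which is semisimple, and the orthogonality $\textup{Hom}_{G_1}(V/W_i,V/W_j)=0$ for $i\neq j$ is all that the Goursat argument needs. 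Pushing this down from $\bar R$ to $R$ is harmless since the forgetful functor along $R \subseteq \bar R$ detects surjectivity.

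The cleanest way to package this, matching the paper's remark that ``the general case follows by a simple induction argument,'' is: handle $|I|=2$ first, showing $W_1 + W_2 = V$ by the orthogonality argument above, hence $V/(W_1 \cap W_2) \simeq V/W_1 \oplus V/W_2$; then for $|I| = n$ apply the inductive hypothesis to the family indexed by $I \setminus \{i_0\}$ to get $V \twoheadrightarrow \bigoplus_{i \neq i_0} V/W_i$ with kernel $\bigcap_{i \neq i_0} W_i$, observe that $V/(\bigcap_{i\neq i_0} W_i)$ is a sum of the $\pi_{1,i}$-isotypic pieces for $i \neq i_0$ and hence has no $\pi_{1,i_0}$ in its $G_1$-socle, so $W_{i_0} + \bigcap_{i \neq i_0} W_i = V$ again by the $|I|=2$ case applied to the two subrepresentations $W_{i_0}$ and $\bigcap_{i \neq i_0} W_i$, and conclude $\bigcap_{i \in I} W_i$ has the claimed form by one more application of the two-term statement.

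The main obstacle I anticipate is the bookkeeping around $R$ not being algebraically closed: over $R$, the ``isotypic'' quotients $V/W_i$ are isotypic only in the $D_{1,i}$-twisted sense of Lemma \ref{lem:pi_coinvariant}, and one must make sure the orthogonality $\textup{Hom}_{G_1}(V/W_i, V/W_j) = 0$ for $i \neq j$ still holds — which it does, since a nonzero such map would, composing with the quotient $V \to V/W_i$ and some $V/W_i \to \pi_{1,i}$... no: one needs that $V/W_j$, being built from $\pi_{1,j}$, admits no nonzero $G_1$-map to anything built from $\pi_{1,i}$, and this follows from Schur since $\pi_{1,i} \not\simeq \pi_{1,j}$ and these remain non-isomorphic (and have no common subquotient) over $R$. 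Once that orthogonality is in hand the argument is purely formal, which is presumably why the authors relegate it to the reader.
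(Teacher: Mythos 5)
Your proof is correct and follows precisely the route the paper hints at (handle $|I|=2$ by orthogonality of isotypic types, then induct); since the paper leaves this as an exercise, there is no alternative argument to compare against. Two small remarks, mainly about streamlining. First, the concern about $R$ not being algebraically closed and the $D_{1,i}$-twist is a non-issue and the detour through $\bar R$ is unnecessary: by Lemma~\ref{lem:pi_coinvariant} one has $V/W_i \simeq \Theta(\pi_{1,i}) \otimes_{D_{1,i}} \pi_{1,i}$, and since $D_{1,i}$ is a division ring every right $D_{1,i}$-module is free, so as a $G_1$-module $V/W_i$ is literally a (possibly infinite) direct sum of copies of $\pi_{1,i}$; the orthogonality $\textup{Hom}_{G_1}(V/W_i,V/W_j)=0$ for $i\neq j$ (and for all their quotients, which is what Goursat actually needs) then follows directly over $R$ from Schur. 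Second, in the inductive step you write that you ``apply the $|I|=2$ case to the two subrepresentations $W_{i_0}$ and $\bigcap_{i\neq i_0}W_i$,'' but the latter is not of the form $V[\pi_1]$ for a single irreducible, so this is not literally an invocation of the two-element corollary; what you are really doing is re-running the same Goursat-plus-orthogonality argument with $M_1 = V/W_{i_0}$ ($\pi_{1,i_0}$-isotypic) and $M_2 = V/\bigcap_{i\neq i_0}W_i$ (isotypic for the disjoint set of types $\{\pi_{1,i}\}_{i\neq i_0}$). You are clearly aware of this, but the phrasing could mislead a reader into thinking the corollary is being applied circularly.
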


\subsection{} We fix an algebraic closure $\bar{R}$ of a perfect field $R$. For all irreducible admissible representation $\pi_1$ in $\textup{Rep}_R(G_1)$, Theorem \ref{thm:decomposition_scalar_extension} guarantees the existence of an irreducible admissible $\rho_1 \in \textup{Rep}_{\bar{R}}(G_1)$ such that
$$\pi_1 \otimes_R \bar{R} \simeq m_1 \bigg( \bigoplus_{\rho_1^\sigma \in \mathcal{O}_{\rho_1}} \rho_1^\sigma \bigg)$$
where $\mathcal{O}_{\rho_1}$ is the Galois orbit of $\rho_1$, which is a finite set in bijection with the embeddings of fields $\textup{Hom}_R(R(\rho_1),\bar{R})$. We write ${}^w \rho_1$ for the corresponding ${}^\sigma \rho \in \mathcal{O}_{\rho_1}$. 

Let $V \in \textup{Rep}_R(G_1 \times G_2)$. We relate the space of $\pi_1$-coinvariants $V_{\pi_1}$ to the spaces of $({}^w \rho_1)$-coninvariants $V_{{}^w \rho_1}$ for $w \in \textup{Hom}_R(R(\rho_1),\bar{R})$ in the result below:

\begin{theo} \label{thm:pi_coinvariants_scalar_extension} Suppose $R$ is perfect. Let $(\pi_1,V_1)$ be an irreducible admissible representation in $\textup{Rep}_R(G_1)$. We consider the decomposition of $\pi_1 \otimes_R \bar{R}$ given in Theorem \ref{thm:decomposition_scalar_extension}. Then for all $V \in \textup{Rep}_R(G_1 \times G_2)$, we have
$$V_{\pi_1} \otimes_R \bar{R} \simeq \bigoplus_{w \in \textup{Hom}_R(R(\rho_1),\bar{R})} (V \otimes_R \bar{R})_{{}^w \rho_1}.$$ \end{theo}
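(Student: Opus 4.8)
The plan is to reduce the statement to a bimodule computation. By Lemma \ref{lem:pi_coinvariant}, the largest $\pi_1$-isotypic quotient has the explicit shape $V_{\pi_1} \simeq \pi_2 \otimes_{D_1} \pi_1$ with $V_2 \simeq (V \otimes_R \textup{Hom}_{D_1}(V_1, D_1)^\infty)_{1_{G_1}}$, and similarly over $\bar{R}$ each $(V \otimes_R \bar{R})_{{}^w\rho_1}$ is of the form $(V\otimes_R\bar R)_2 \otimes_{\bar R} {}^w\rho_1$ (over the algebraically closed field the division algebra collapses to $\bar{R}$, so this is the usual isotypic quotient). Since tensor products and the coinvariants functor $(-)_{1_{G_1}}$ commute with the flat base change $-\otimes_R \bar{R}$, we have $V_{\pi_1}\otimes_R \bar R \simeq (\pi_2\otimes_{D_1}\pi_1)\otimes_R\bar R \simeq (\pi_2\otimes_R\bar R)\otimes_{D_1\otimes_R\bar R}(\pi_1\otimes_R\bar R)$. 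So the whole statement comes down to understanding the algebra $D_1\otimes_R\bar R$ and the module $\pi_1\otimes_R\bar R$ over it.

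The key input is Theorem \ref{thm:decomposition_scalar_extension}: writing $E = R(\rho_1)$ for the centre of $D_1$ and $m = m_1$ for the index, we get $E\otimes_R\bar R \simeq \prod_{w\in\textup{Hom}_R(E,\bar R)}\bar R$ and hence $D_1\otimes_R\bar R \simeq \prod_{w} M_m(\bar R)$, a product of $n=[E:R]$ matrix blocks indexed by the embeddings $w$, with $\pi_1\otimes_R\bar R \simeq \bigoplus_w m\,{}^w\rho_1$ distributing one isotypic piece into each block. First I would fix the central idempotents $e_w\in E\otimes_R\bar R$ cutting out the blocks; each $e_w$ acts on $\pi_1\otimes_R\bar R$ by projecting onto $m\,{}^w\rho_1$ and acts on $\pi_2\otimes_R\bar R$ (via the right $D_1$-module, hence right $(D_1\otimes_R\bar R)$-module, structure) by projecting onto a corresponding block summand. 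Applying these orthogonal idempotents to the bimodule tensor product $(\pi_2\otimes_R\bar R)\otimes_{D_1\otimes_R\bar R}(\pi_1\otimes_R\bar R)$ splits it as $\bigoplus_w (\pi_2\otimes_R\bar R)e_w \otimes_{M_m(\bar R)} e_w(\pi_1\otimes_R\bar R)$. Within each block, Morita equivalence for $M_m(\bar R)$ identifies $(\pi_2\otimes_R\bar R)e_w \otimes_{M_m(\bar R)} (m\,{}^w\rho_1)$ with $(V\otimes_R\bar R)_{{}^w\rho_1}$, using once more the explicit formula of Lemma \ref{lem:pi_coinvariant} for the block-$w$ coinvariants over $\bar R$ together with $\textup{Hom}_{D_1}(V_1,D_1)^\infty\otimes_R\bar R$ matching up block by block.

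The main obstacle I anticipate is bookkeeping the identification of the block-$w$ summand of the $\bar R$-side bimodule $V_2$ with the honest $\bar R$-side isotypic object $(V\otimes_R\bar R)_{{}^w\rho_1}$: one must check that the idempotent decomposition of $D_1\otimes_R\bar R$ is compatible on both tensor factors simultaneously — i.e. that the right action of $e_w$ on $\textup{Hom}_{D_1}(V_1,D_1)^\infty\otimes_R\bar R$ is dual to the left action cutting out $m\,{}^w\rho_1$ inside $\pi_1\otimes_R\bar R$ — and that the residual $m$-fold multiplicity is absorbed correctly by the Morita reduction rather than surviving into the answer. Everything else (flatness of $\bar R/R$, exactness and base-change compatibility of $(-)_{1_{G_1}}$, admissibility being preserved) is routine and already available from the cited results, in particular \cite[II.4.2]{vig} and Theorem \ref{thm:decomposition_scalar_extension}.
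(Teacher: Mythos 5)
Your proposal shares with the paper the central bimodule computation: extend $V_{\pi_1}\simeq V_2\otimes_{D_1}V_1$ to $\bar R$, decompose $D_1\otimes_R\bar R\simeq\prod_w M_{m_1}(\bar R)$ via central idempotents $e_w$, and eliminate the multiplicity $m_1$ by Morita to realise each block as $V_{2,w}\otimes_{\bar R}{}^w\rho_1$. Up to that point you are exactly following the paper. Where you diverge is the final step: you want to identify the block $(V_2\otimes_R\bar R)e_w\otimes_{M_{m_1}(\bar R)}e_w(V_1\otimes_R\bar R)$ \emph{outright} with $(V\otimes_R\bar R)_{{}^w\rho_1}$, by base-changing the explicit formula $V_2\simeq(V\otimes_R\textup{Hom}_{D_1}(V_1,D_1)^\infty)_{1_{G_1}}$ from Lemma \ref{lem:pi_coinvariant} and claiming that $\textup{Hom}_{D_1}(V_1,D_1)^\infty\otimes_R\bar R$ matches the $\bar R$-side formula block by block. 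That is precisely the step you flag as the main obstacle, and it is indeed a genuine gap: it requires proving a base-change compatibility $\textup{Hom}_{D_1}(V_1,D_1)^\infty\otimes_R\bar R\simeq\textup{Hom}_{D_1\otimes_R\bar R}(V_1\otimes_R\bar R,D_1\otimes_R\bar R)^\infty$ (which needs admissibility of $V_1$ and is not automatic), then tracking the duality through the idempotents. You never carry this out, so your argument does not close.

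The paper avoids this entirely by proving a \emph{weaker} claim from the bimodule computation and supplementing it with a separate, short argument. After obtaining $V_{\pi_1}\otimes_R\bar R\simeq\bigoplus_w V_{2,w}\otimes_{\bar R}{}^w\rho_1$, the paper only observes that each $V_{2,w}\otimes_{\bar R}{}^w\rho_1$ is ${}^w\rho_1$-isotypic, so the quotient map $V\otimes_R\bar R\twoheadrightarrow V_{2,w}\otimes_{\bar R}{}^w\rho_1$ factors through $(V\otimes_R\bar R)_{{}^w\rho_1}$ by the very definition of the largest isotypic quotient. This gives $\bigcap_w(V\otimes_R\bar R)[{}^w\rho_1]\subseteq V[\pi_1]\otimes_R\bar R$. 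The reverse containment is then established directly and cheaply: for any $\bar R[G_1]$-morphism $f:V\otimes_R\bar R\to{}^w\rho_1$, restriction of scalars gives an $R[G_1]$-morphism into $({}^w\rho_1)|_R$, which is $\pi_1$-isotypic (by the first lines of the proof of Lemma \ref{lem:scalar_restriction_uniqueness}), so $f$ must kill $V[\pi_1]\otimes 1$ by definition of $V[\pi_1]$. Combined with Corollary \ref{cor:finite_family_of_pi1_coinvariants}, which identifies $\bigoplus_w(V\otimes_R\bar R)_{{}^w\rho_1}$ as the quotient of $V\otimes_R\bar R$ by $\bigcap_w(V\otimes_R\bar R)[{}^w\rho_1]$, the two containments give the isomorphism. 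To repair your proposal, the simplest route is to adopt this two-containment structure: keep your Morita computation, but use it only to conclude the factorization (one inclusion), and add the $\pi_1$-isotypicity argument for $({}^w\rho_1)|_R$ to get the other. That avoids having to base-change the $\textup{Hom}$ formula at all.
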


\begin{proof} By Corollary \ref{cor:finite_family_of_pi1_coinvariants}, the map
$$V \otimes_R \bar{R} \to \bigoplus_{w \in \textup{Hom}_R(E
R(\rho_1),\bar{R})} (V\otimes_R \bar{R})_{{}^w \rho_1}$$
is surjective and has kernel $\bigcap_w (V \otimes_R \bar{R}) [{}^w \rho_1]$. We are going to show that
$$V[\pi_1] \otimes_R \bar{R} = \bigcap_{w \in \textup{Hom}_R(R(\rho_1),\bar{R})} (V \otimes_R \bar{R})[{}^w \rho_1].$$
The latter equality will lead to $(V \otimes_R \bar{R})/ (V[\pi_1] \otimes_R \bar{R}) \simeq V_{\pi_1} \otimes_R \bar{R}$ is the largest isotypic quotient of $V$ associated to the finite family $({}^w \rho_1)_w$ in the sense of Corollary \ref{cor:finite_family_of_pi1_coinvariants}.

The direct inclusion is the easiest part. Let $v \in V[\pi_1]$. We want to prove that for all $w \in \textup{Hom}_R(R(\rho_1),\bar{R})$ and all $f \in \textup{Hom}_{\bar{R}[G_1]}(V \otimes_R \bar{R},{}^w \rho_1)$, we have $v \otimes_R 1 \in \textup{Ker}(f)$. In particular, such an $f$ defines a morphism of $R[G_1]$-modules $(V \otimes_R \bar{R})|_R \to ({}^w \rho_1)|_R$ by restriction of scalars. Moreover, the morphism $f$ is non-zero if and only if its restriction to $V\otimes_R 1 = \{ v \otimes_R 1 \ | \ v \in V\}$ is non-zero. However, according to the first lines of the proof of Lemma \ref{lem:scalar_restriction_uniqueness}, we know that the representation $({}^w \rho_1)|_R$ is $\pi_1$-isotypic. Therefore $f|_{V \otimes_R 1} : V \simeq V \otimes_R 1 \to ({}^w \rho_1)|_R \simeq \oplus \pi_1$. So $f(v \otimes_R 1) =0$ by definition of $V[\pi_1]$.

Regarding the reverse inclusion, we know thanks to Lemma \ref{lem:pi_coinvariant} that there exists a smooth $R[G_2]-D_1$-bimodule $V_2$ such that $V_{\pi_1} \simeq V_2 \otimes_{D_1} V_1$ where $D_1 = \textup{End}_{G_1}(\pi_1)$ is a division algebra of degree $m_1$ over its centre. Moreover, we have an isomorphism of representations
$$V_{\pi_1} \otimes_R \bar{R} \simeq (V_2 \otimes_R \bar{R}) \otimes_{D_1 \otimes_R \bar{R}} (V_1 \otimes_R \bar{R}).$$
Thanks to Theorem \ref{thm:decomposition_scalar_extension}, the ring $D_1 \otimes_R \bar{R}$ is isomorphic to $\prod_w e_w D \simeq \prod_w M_{m_1}(\bar{R})$ where $(e_w)_w$ is a system of primitive central idempotents in $D_1 \otimes_R \bar{R}$. We deduce that
$$V_{\pi_1} \otimes_R \bar{R} \simeq \bigoplus V_w \otimes_{e_w D} (m_1 {}^w \rho_1)$$
where $m_1 {}^w \rho_1 = e_w (\pi_1 \otimes_R \bar{R})$ and $V_w = V_2 e_w$.

We now prove there exists a representation $V_{2,w} \in \textup{Rep}_{\bar{R}}(G_2)$ such that
$$V_w \otimes_{e_w D} (m_1 ({}^w \rho_1)) \simeq V_{2,w} \otimes_{\bar{R}} ({}^w \rho_1).$$
Indeed $e_w D \simeq M_{m_1}(\bar{R})$, and by denoting $e_{i,j}$ the elementary matrix in $M_{m_1}(\bar{R})$, we have $e_{1,1} + \dots + e_{m_1,m_1} = \textup{Id}_{m_1}$ which is a decomposition of the unit as a sum of idempotents. These idempotents are not necessarily central in $M_{m_1}(\bar{R})$. Nevertheless, each $e_{i,i}$ defines a map $v \in V_w \mapsto v e_{i,i} \in V_w$ which is a morphism of $\bar{R}[G_2]$-modules. In addition $e_{i,i} V_w \simeq e_{1,1} V_w$ for all $i$. Denoting $V_{2,w} = V_w e_{1,1} \in \textup{Rep}_{\bar{R}}(G_2)$, we have $V_w \simeq m_1 V_{2,w}$ and $(m_1 V_{2,w}) \otimes_{M_{m_1}(\bar{R})} (m_1 ({}^w \rho_1)) \simeq V_{2,w} \otimes_{\bar{R}} ({}^w \rho_1)$. Therefore the quotient $V \otimes_R \bar{R} \to V_{2,w} \otimes_{\bar{R}} ({}^w \rho_1)$ factors through $(V \otimes_R \bar{R})_{{}^w \rho_1}$ by definition of the largest ${}^w \rho_1$-isotypic quotient. So the quotient $V \otimes_R \bar{R} \to V_{\pi_1} \otimes_R \bar{R} \simeq \oplus_w V_{2,w} \otimes_{\bar{R}} ({}^w \rho_1)$ factors through $\oplus_w (V \otimes_R \bar{R})_{{}^w \rho_1}$. In other words $\cap_w (V\otimes_R \bar{R})[{}^w \rho_1] \subseteq V[\pi_1] \otimes_R \bar{R}$. \end{proof}

\section{Isotypic lifts and rationality} \label{sec:isotypic_lifts_and_rationality}

Let $H_1$ and $H_2$ be locally profinite groups. Let $R$ be a perfect field and assume there exist open subgroups of $H_1 \times H_2$ of invertible pro-order in $R$. Assume all irreducible representations in $\textup{Rep}_R(H_1)$ and $\textup{Rep}_R(H_2)$ are admissible. Let $V \in \textup{Rep}_R(H_1 \times H_2)$.

\subsection{} Given an irreducible representation $\pi_1$ of $H_1$, Lemma \ref{lem:pi_coinvariant} allows us to define the isotypic lift $\Theta(\pi_1)$, which is a representation of $H_2$ endowed with a compatible right action of $D_1 = \textup{End}_{R[H_1]}(\pi_1)$, such that $V_{\pi_1} \simeq \Theta(\pi_1) \otimes_{D_1} \pi_1$. We use the notation $\Theta$ for the $\pi_1$-isotypic lift as there is an obvious analogy with the definition of the theta lifts.

\begin{theo} \label{thm:big_theta_def} Let $\pi_1 \in \textup{Irr}_R(H_1)$. There exists $\Theta(\pi_1) \in \textup{Rep}_R(H_2)$, endowed with a compatible structure of right $D_1$-module, such that $V_{\pi_1} \simeq \Theta(\pi_1) \otimes_{D_1} \pi_1$. Moreover $\Theta(\pi_1)$ is unique up to isomorphism of smooth $R[H_1]-D_1$-bimodules. \end{theo}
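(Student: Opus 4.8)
The plan is to obtain $\Theta(\pi_1)$ as the right-hand factor in the tensor decomposition of the largest isotypic quotient $V_{\pi_1}$ provided by Lemma \ref{lem:pi_coinvariant}, and then to record the bimodule structure and the uniqueness. Concretely, Lemma \ref{lem:pi_coinvariant} already asserts the existence of an $\mathcal{H}_R(H_2)$--$D_1$-bimodule $(\pi_2,V_2)$, unique up to isomorphism, with $V_{\pi_1}\simeq \pi_2\otimes_{D_1}\pi_1$ and with the explicit formula $V_2\simeq (V\otimes_R\textup{Hom}_{D_1}(V_1,D_1)^\infty)_{1_{H_1}}$. So the first step is simply to set $\Theta(\pi_1)=\pi_2$ and observe that an $\mathcal{H}_R(H_2)$-module structure together with the compatibility with the $D_1$-action is exactly the data of a smooth $R[H_2]$--$D_1$-bimodule, once we check that the $\mathcal{H}_R(H_2)$-module $V_2$ is genuinely smooth. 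Smoothness is inherited from $V$: every element of the explicit coinvariant space is the image of a finite sum of pure tensors $v\otimes\lambda$, each of which is fixed by a small enough compact open subgroup of $H_2$ coming from the smoothness of $V$ as an $H_1\times H_2$-representation, hence so is its image in the quotient.

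Next I would promote $\Theta(\pi_1)$ to a representation of $H_2$ (not just an $\mathcal{H}_R(H_2)$-module) using the hypothesis that $H_2$ contains an open subgroup of invertible pro-order in $R$: the equivalence between nondegenerate smooth $\mathcal{H}_R(H_2)$-modules and smooth $R[H_2]$-modules from \cite[I.4]{vig} applies, and nondegeneracy again follows because $V$ is smooth over $H_1\times H_2$, so $V_{\pi_1}=\Theta(\pi_1)\otimes_{D_1}\pi_1$ is smooth over $H_2$ and the $D_1$-action on $V_1$ is $H_1$-equivariant and commutes with the $H_2$-action, forcing $\Theta(\pi_1)$ itself to be smooth and nondegenerate. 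At this point $V_{\pi_1}\simeq\Theta(\pi_1)\otimes_{D_1}\pi_1$ holds in $\textup{Rep}_R(H_1\times H_2)$ as required.

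For the uniqueness statement, suppose $W$ is another smooth $R[H_2]$--$D_1$-bimodule with $W\otimes_{D_1}\pi_1\simeq V_{\pi_1}\simeq \Theta(\pi_1)\otimes_{D_1}\pi_1$ as representations of $H_1\times H_2$. I would recover $W$ functorially from $V_{\pi_1}$: tensoring on the right over $D_1$ with the irreducible admissible $\pi_1$ is, up to the Morita-type considerations already used in the proof of Lemma \ref{lem:pi_coinvariant}, an equivalence onto its essential image of $\pi_1$-isotypic representations, with quasi-inverse $X\mapsto (X\otimes_R\textup{Hom}_{D_1}(V_1,D_1)^\infty)_{1_{H_1}}$. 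Applying this quasi-inverse to $V_{\pi_1}$ gives back both $W$ and $\Theta(\pi_1)$ together with their $\mathcal{H}_R(H_2)$--$D_1$-bimodule structures, hence $W\simeq\Theta(\pi_1)$ as smooth $R[H_2]$--$D_1$-bimodules; passing to $H_1$-trivial parts is compatible with the $D_1$-action because the $D_1$-action is by $H_1$-endomorphisms, so the isomorphism is automatically a bimodule isomorphism. (One should note that the statement phrases the bimodule as a smooth $R[H_1]$--$D_1$-bimodule, where $H_1$ acts trivially on $\Theta(\pi_1)$; this is harmless since the $\pi_1$-isotypic information lives entirely in the $D_1$-factor.)

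The main obstacle is not any deep fact but keeping the bookkeeping of the three commuting actions --- $H_1$, $H_2$ and $D_1$ --- consistent: one must check that the $D_1$-action produced by Lemma \ref{lem:pi_coinvariant} really is compatible with the $H_2$-action (so that ``smooth $R[H_2]$--$D_1$-bimodule'' makes sense), and that the uniqueness clause is uniqueness of bimodules rather than merely of underlying $H_2$-representations. Both points reduce to the observation that the $D_1$-module structure on $V_1$ commutes with $G_1=H_1$ and is untouched by $G_2=H_2$, so it descends to every construction in sight; once that is noted, everything follows formally from Lemma \ref{lem:pi_coinvariant} and \cite[I.4]{vig}.
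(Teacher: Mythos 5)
Your proposal is correct and follows the same route as the paper: the theorem is, as you say, a direct restatement of the second bullet of Lemma \ref{lem:pi_coinvariant} (with $\Theta(\pi_1)=\pi_2$, admissibility of $\pi_1$ being a standing assumption of the section), and the paper's text preceding the theorem offers nothing beyond this observation. Your remark that the statement's ``$R[H_1]$--$D_1$-bimodule'' is most naturally read as ``$R[H_2]$--$D_1$-bimodule'' is well spotted, and the bookkeeping you supply on smoothness and on the passage between nondegenerate $\mathcal{H}_R(H_2)$-modules and smooth $R[H_2]$-modules is the right way to flesh out what the paper leaves implicit.
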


When $R$ is algebraically closed, or $\pi_1$ is absolutely irreducible, the division algebra $D_1$ above is simply $R$. The structure of right $D_1$-module then corresponds to the natural $R$-module structure of the representation $\Theta(\pi_1)$. When $R=\mathbb{C}$, we then find the usual definition of an isotypic lift, such as the big theta lift.

\subsection{} Let $\pi_1 \in \textup{Irr}_R(H_1)$. We are going to define the three main statements at the heart of a correspondence such as the theta correspondence. The first one is

(Fin) the $R[H_2]-D_1$-bimodule $\Theta(\pi_1)$ has finite length.

\noindent If (Fin) holds, the maximal semisimple quotient $\theta(\pi_1)$ of the $R[H_2]-D_1$-bimodule $\Theta(\pi_1)$, also called the cosocle, is well-defined. We add the second statement

(Irr) \ $\theta(\pi_1)$ is irreducible or $0$.

\noindent We also add when (Fin) and (Irr) hold for all $\pi_1$, the third statement

(Uni) $0 \neq \theta(\pi_1) \simeq \theta(\pi_1')$ if and only if $\pi_1 \simeq \pi_1'$.

\begin{rem} In particular $0 \neq \theta(\pi_1) \simeq \theta(\pi_1')$ means an isomorphism of $R[H_2]$-module that is compatible with the respective structures of right modules via some isomorphism $\textup{End}_{R[H_1]}(\pi_1) \simeq \textup{End}_{R[H_1]}(\pi_1')$. \end{rem}

These statements are related to the field $R$ and we want to show these three statements over $R$ are equivalent to the analogous statements over an algebraic closure $\bar{R}$ of $R$ using $V \otimes_R \bar{R}$. We call ($\Theta_R$) the statements (Fin)-(Irr)-(Uni) over $R$. The goal of the section is to prove the following:

\begin{theo} \label{thm:theta_corresp_over_R_and_bar_R} Recall that $R$ is a perfect field and fix an algebraic closure $\bar{R}$ of $R$. Then the following assertions are equivalent
\begin{enumerate}[label=\textup{\alph*)}]
\item \textup{($\Theta_R$)} hold for all $\pi_1 \in \textup{Irr}_R(H_1)$;
\item \textup{($\Theta_{\bar{R}}$)} hold for all $\rho_1 \in \textup{Irr}_{\bar{R}}(H_1)$.
\end{enumerate} \end{theo}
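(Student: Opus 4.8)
The plan is to bootstrap everything off Theorem \ref{thm:pi_coinvariants_scalar_extension}, which already tells us how $\pi_1$-coinvariants behave under scalar extension, and then to translate each of the three statements (Fin), (Irr), (Uni) across the extension $\bar{R}/R$ one at a time. The key structural input is that, by Theorem \ref{thm:decomposition_scalar_extension}, every $\pi_1 \in \textup{Irr}_R(H_1)$ corresponds to a Galois orbit $\mathcal{O}_{\rho_1}$ of absolutely irreducibles over $\bar{R}$ with common multiplicity $m_1$, and conversely by Lemma \ref{lem:scalar_restriction_uniqueness} every $\rho_1 \in \textup{Irr}_{\bar{R}}(H_1)$ arises this way from $\pi(\rho_1,R)$ (using that all irreducibles are assumed admissible, hence realisable over a finite extension of $R$). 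So the map $\pi_1 \mapsto \mathcal{O}_{\rho_1}$ is a bijection between $\textup{Irr}_R(H_1)$ and $\textup{Gal}_R(\bar{R})$-orbits on $\textup{Irr}_{\bar{R}}(H_1)$. The isomorphism from Theorem \ref{thm:pi_coinvariants_scalar_extension}, namely $V_{\pi_1} \otimes_R \bar{R} \simeq \bigoplus_{w} (V \otimes_R \bar{R})_{{}^w\rho_1}$, is moreover $\textup{Gal}_R(\bar{R})$-equivariant: applying $\sigma \in \textup{Gal}_R(\bar{R})$ permutes the summands according to its action on $\textup{Hom}_R(R(\rho_1),\bar{R}) \simeq \mathcal{O}_{\rho_1}$, since $\sigma$ fixes $V_{\pi_1}$ coming from $V$ over $R$. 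I would make this equivariance explicit early on, as it is the engine for all three equivalences.

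For (Fin): unwinding the bimodule descriptions, if $\Theta(\pi_1) \otimes_{D_1}\pi_1 \simeq V_{\pi_1}$ then after extending scalars the argument in the proof of Theorem \ref{thm:pi_coinvariants_scalar_extension} shows $\Theta(\pi_1) \otimes_R \bar{R} \simeq \bigoplus_w V_{2,w}$ with $V_{2,w} \otimes_{\bar{R}} {}^w\rho_1 \simeq (V\otimes_R\bar{R})_{{}^w\rho_1}$, i.e. $V_{2,w} \simeq \Theta_{\bar R}({}^w\rho_1)$. Since length is additive over finite direct sums, $\Theta(\pi_1)$ has finite length over $R[H_2]-D_1$ if and only if each $\Theta_{\bar R}({}^w\rho_1)$ has finite length over $\bar R[H_2]$ — here I would invoke that length over $R$ of a module and length over $\bar R$ of its scalar extension differ only by the bounded factor $[R(\rho_1):R]\cdot m_1^2$ coming from $D_1 \otimes_R \bar R \simeq \prod_w M_{m_1}(\bar R)$, using semisimplicity of scalar extension \cite[II.4.2]{vig}. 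Running over all $\pi_1$ on the left corresponds exactly to running over all $\rho_1$ on the right by the orbit bijection, so (Fin) over $R$ for all $\pi_1$ $\iff$ (Fin) over $\bar R$ for all $\rho_1$. For (Irr): assuming (Fin), the cosocle commutes with the (exact, faithful) scalar extension functor up to the same multiplicities, so $\theta(\pi_1)\otimes_R\bar R \simeq \bigoplus_w \theta_{\bar R}({}^w\rho_1)$ (each summand possibly with multiplicity $m_1$, absorbed into the bimodule structure). Then $\theta(\pi_1)$ is irreducible-or-zero as an $R[H_2]-D_1$-bimodule iff its scalar extension is a single Galois orbit of irreducibles-or-zero, iff each $\theta_{\bar R}({}^w\rho_1)$ is irreducible-or-zero; again the orbit bijection lets us quantify over all representations on both sides. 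For (Uni): with (Fin) and (Irr) in hand, $\theta(\pi_1) \simeq \theta(\pi_1')$ as bimodules iff the Galois orbits $\{\theta_{\bar R}({}^w\rho_1)\}_w$ and $\{\theta_{\bar R}({}^{w'}\rho_1')\}_{w'}$ coincide, and injectivity of $\rho_1 \mapsto \theta_{\bar R}(\rho_1)$ together with the orbit bijection on $H_1$-side then forces $\pi_1 \simeq \pi_1'$, and conversely.

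The main obstacle I expect is bookkeeping the $D_1$-module structures correctly rather than any deep difficulty: over $R$ the lift $\Theta(\pi_1)$ carries a right $D_1$-action where $D_1$ is a possibly-nonsplit division algebra, and "irreducible" and "has finite length" in statements (Irr), (Fin), (Uni) are meant in the category of $R[H_2]-D_1$-bimodules, whereas over $\bar R$ one works with plain $\bar R[H_2]$-modules. Tracking how $D_1 \otimes_R \bar R \simeq \prod_w M_{m_1}(\bar R)$ acts — and checking that the Morita-type reduction $m_1 V_{2,w} \otimes_{M_{m_1}(\bar R)} m_1({}^w\rho_1) \simeq V_{2,w}\otimes_{\bar R}({}^w\rho_1)$ used in the proof of Theorem \ref{thm:pi_coinvariants_scalar_extension} is compatible with cosocles and with the isomorphism relation in (Uni) — is the delicate point. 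I would isolate this as a preliminary lemma: for an irreducible admissible $\pi_1/R$ with decomposition as in Theorem \ref{thm:decomposition_scalar_extension}, the functor $N \mapsto N \otimes_R \bar R$ induces a length-preserving-up-to-the-factor-$[R(\rho_1):R]m_1^2$, cosocle-compatible, $\textup{Gal}_R(\bar R)$-equivariant correspondence between $R[H_2]-D_1$-bimodule quotients of $V_{\pi_1}$ and (Galois-stable families of) $\bar R[H_2]$-module quotients of $\bigoplus_w(V\otimes_R\bar R)_{{}^w\rho_1}$. Once that lemma is stated and proved, the three equivalences in the theorem fall out by the quantifier-matching described above, and the argument is essentially formal.
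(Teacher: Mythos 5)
Your proposal follows essentially the same route as the paper. Your ``preliminary lemma'' is precisely the content of Lemma \ref{lem:big_theta_scalar_extension}, which the paper records as $\Theta(\pi_1) \otimes_{E_1,\varphi(w)} \bar{R} \simeq m_1\,\Theta({}^w\rho_1)$ compatibly with $D_1 \otimes_{E_1,\varphi(w)} \bar{R} \simeq \mathcal{M}_{m_1}(\bar{R})$; the paper then proves one proposition per statement (Fin), (Irr), (Uni), each with an all/exists version on the $\bar R$ side, and the theorem is assembled from those plus the orbit bijection of Theorem \ref{thm:decomposition_scalar_extension} and Lemma \ref{lem:scalar_restriction_uniqueness}, exactly as in your plan.

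Two places deserve more care than your sketch gives them. First, your invocation of ``cosocle commutes with scalar extension'' is correct for a perfect base and is indeed what powers the (Irr) equivalence (the paper's version is the clause ``$\theta(\pi_1) \otimes_{E_1,\varphi(w)} \bar{R}$ has length at least $2m_1$'' at the end of its (Irr) proof, which relies on scalar extension preserving semisimplicity over a perfect field); but it should be isolated and proved, since a priori radicals need not base-change well. Second, and more substantively, your treatment of (Uni) is thin. The paper explicitly imposes the hypothesis $D_1 = \textup{End}_{R[H_1]}(\pi_1) \simeq \textup{End}_{R[H_1]}(\pi_1')$ in its (Uni) proposition, and the remark after the definition of (Uni) makes clear that ``$\theta(\pi_1) \simeq \theta(\pi_1')$'' means compatibly with some isomorphism $D_1 \simeq D_1'$ of the acting algebras. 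Your ``injectivity of $\rho_1 \mapsto \theta_{\bar R}(\rho_1)$ plus the orbit bijection forces $\pi_1 \simeq \pi_1'$, and conversely'' passes silently over (i) how one produces the needed isomorphism $D_1 \simeq D_1'$ when starting from a $\bar R$-level isomorphism of lifts, and (ii) the distinction between ``$\rho_1$ and $\rho_1'$ lie in the same Galois orbit'' (what the orbit bijection gives) and ``$\rho_1 \simeq \rho_1'$'' (what (Uni) over $\bar R$ demands); one needs the Galois equivariance of Theorem \ref{thm:theta_lifts_and_galois_action} to move within an orbit. Also a small bookkeeping slip: in your (Fin) paragraph you write $\Theta(\pi_1) \otimes_R \bar R \simeq \bigoplus_w V_{2,w}$, but the proof of Theorem \ref{thm:pi_coinvariants_scalar_extension} gives $V_w \simeq m_1 V_{2,w}$, so the correct identification is $\Theta(\pi_1) \otimes_R \bar R \simeq \bigoplus_w m_1\,\Theta({}^w\rho_1)$; the length factor you quote should likewise be $[R(\rho_1):R]\cdot m_1$ rather than $[R(\rho_1):R]\cdot m_1^2$ once the Morita reduction is applied. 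None of this affects finiteness, but it is worth getting right since the same multiplicities resurface in (Irr) and (Uni).
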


The proof of the theorem will occupy the next paragraphs. We prove the equivalence of the three statements separately.

\subsection{} We start by explaining how the isotypic lift behaves with respect to scalar extension. Let $\pi_1 \in \textup{Irr}_R(H_1)$. By choosing an irreducible factor $\rho_1$ in $\pi_1 \otimes_R \bar{R}$, we can consider the decomposition of Theorem \ref{thm:decomposition_scalar_extension}
$$\pi_1 \otimes_R \bar{R} \simeq  m_1 \bigg(\bigoplus_{w \in \textup{Hom}_R(R(\rho_1),\bar{R})} {}^w \rho_1 \bigg).$$
Moreover $E_1 \simeq R(\rho_1)$ where $E_1$ is the centre of $D_1$.

\begin{lem} \label{lem:big_theta_scalar_extension} We have an isomorphism of representations in $\textup{Rep}_{\bar{R}}(H_1 \times H_2)$
$$(\Theta(\pi_1) \otimes_{D_1} \pi_1) \otimes_R \bar{R} \simeq \bigoplus_{w \in \textup{Hom}_R(R(\rho_1),\bar{R})} \Theta({}^w \rho_1) \otimes_{\bar{R}} {}^w \rho_1.$$
Moreover, by considering $\Theta(\pi_1)$ as a representation in $\textup{Rep}_{E_1}(H_2)$, there exists a bijection $\varphi : \textup{Hom}_R(R(\rho_1), \bar{R}) \to \textup{Hom}_R(E_1,\bar{R})$ such that
$$\Theta(\pi_1) \otimes_{E_1,\varphi(w)} \bar{R} \simeq  m_1 \Theta({}^w \rho_1)$$
as $R[H_2]-\mathcal{M}_{m_1}(\bar{R})$-modules via $D_1 \otimes_{E_1, \varphi(w)} \bar{R} \simeq \mathcal{M}_{m_1}(\bar{R})$. \end{lem}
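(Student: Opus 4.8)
The plan is to reduce everything to the already-proven Theorem \ref{thm:pi_coinvariants_scalar_extension}, which tells us how the largest $\pi_1$-isotypic quotient behaves under $-\otimes_R\bar R$, and then to track the bimodule structures carefully. First I would start from the isomorphism of Theorem \ref{thm:pi_coinvariants_scalar_extension} applied to $V$, namely $V_{\pi_1}\otimes_R\bar R\simeq\bigoplus_{w\in\textup{Hom}_R(R(\rho_1),\bar R)}(V\otimes_R\bar R)_{{}^w\rho_1}$. Now rewrite both sides using Lemma \ref{lem:pi_coinvariant} and Theorem \ref{thm:big_theta_def}: on the left, $V_{\pi_1}\simeq\Theta(\pi_1)\otimes_{D_1}\pi_1$, so $V_{\pi_1}\otimes_R\bar R\simeq(\Theta(\pi_1)\otimes_{D_1}\pi_1)\otimes_R\bar R$; on the right, since each ${}^w\rho_1$ is absolutely irreducible, its endomorphism ring over $\bar R$ is $\bar R$, hence $(V\otimes_R\bar R)_{{}^w\rho_1}\simeq\Theta({}^w\rho_1)\otimes_{\bar R}{}^w\rho_1$. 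Combining these gives the first displayed isomorphism of the lemma directly.

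For the refined second statement I would argue as in the reverse-inclusion part of the proof of Theorem \ref{thm:pi_coinvariants_scalar_extension}. Using that isomorphism's decomposition, $(\Theta(\pi_1)\otimes_{D_1}\pi_1)\otimes_R\bar R\simeq(\Theta(\pi_1)\otimes_R\bar R)\otimes_{D_1\otimes_R\bar R}(\pi_1\otimes_R\bar R)$, and by Theorem \ref{thm:decomposition_scalar_extension} the ring $D_1\otimes_R\bar R$ splits as $\prod_w e_wD_1\otimes_R\bar R\simeq\prod_w M_{m_1}(\bar R)$, where the central idempotents $e_w$ are indexed by $\textup{Hom}_R(E_1,\bar R)$ and $E_1\simeq R(\rho_1)$. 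The splitting of $E_1\otimes_R\bar R\simeq\prod_{\textup{Hom}_R(E_1,\bar R)}\bar R$ furnishes the bijection $\varphi:\textup{Hom}_R(R(\rho_1),\bar R)\to\textup{Hom}_R(E_1,\bar R)$ (matching the $w$-th piece of $\pi_1\otimes_R\bar R$, i.e. $m_1\,{}^w\rho_1=e_{\varphi(w)}(\pi_1\otimes_R\bar R)$, with the corresponding idempotent). Cutting by $e_{\varphi(w)}$ on both sides of the displayed isomorphism isolates $V_w\otimes_{e_{\varphi(w)}D_1}(m_1\,{}^w\rho_1)$ where $V_w=(\Theta(\pi_1)\otimes_R\bar R)e_{\varphi(w)}\simeq\Theta(\pi_1)\otimes_{E_1,\varphi(w)}\bar R$. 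The Morita-type manipulation with the elementary idempotents $e_{1,1},\dots,e_{m_1,m_1}$ of $M_{m_1}(\bar R)$ — exactly as in that earlier proof — identifies $V_w\simeq m_1\Theta({}^w\rho_1)$ and shows $V_w\otimes_{M_{m_1}(\bar R)}(m_1\,{}^w\rho_1)\simeq\Theta({}^w\rho_1)\otimes_{\bar R}{}^w\rho_1$, which is the $w$-th summand of the first display. This gives $\Theta(\pi_1)\otimes_{E_1,\varphi(w)}\bar R\simeq m_1\Theta({}^w\rho_1)$, and one checks the $M_{m_1}(\bar R)$-module structure is the stated one via $D_1\otimes_{E_1,\varphi(w)}\bar R\simeq M_{m_1}(\bar R)$.

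The main obstacle I expect is bookkeeping rather than any genuinely new idea: one must verify that the abstract bimodule isomorphisms from Lemma \ref{lem:pi_coinvariant} and Theorem \ref{thm:pi_coinvariants_scalar_extension} are compatible with the $D_1$-, $E_1$-, and $M_{m_1}(\bar R)$-actions simultaneously, and in particular that the functor $V_{\pi_1}\mapsto\Theta(\pi_1)$ (i.e. passing from $\Theta(\pi_1)\otimes_{D_1}\pi_1$ back to the $H_2$-$D_1$-bimodule $\Theta(\pi_1)$, using its uniqueness in Theorem \ref{thm:big_theta_def}) commutes with extension of scalars and with cutting by the idempotents $e_w$. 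The delicate point is that $\Theta(\pi_1)$ is only well-defined up to isomorphism of $R[H_2]$-$D_1$-bimodules, so each identification must be made at the level of bimodules; the uniqueness clause of Theorem \ref{thm:big_theta_def} (applied over $\bar R$, where $D$ becomes $M_{m_1}(\bar R)$) and the standard Morita equivalence between $\bar R$ and $M_{m_1}(\bar R)$ are what make this rigorous. Once the compatibility of $\varphi$ with the $\textup{Gal}_R(\bar R)$-actions is noted — which is immediate from Theorem \ref{thm:decomposition_scalar_extension} — nothing further is needed.
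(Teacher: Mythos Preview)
Your proposal is correct and follows essentially the same approach as the paper: both start from Theorem \ref{thm:pi_coinvariants_scalar_extension} combined with Theorem \ref{thm:big_theta_def} for the first isomorphism, then obtain the bijection $\varphi$ from the decomposition $E_1\otimes_R\bar R\simeq\prod_{\textup{Hom}_R(E_1,\bar R)}\bar R$ and conclude the second isomorphism by the Morita equivalence between $\bar R$ and $\mathcal{M}_{m_1}(\bar R)$. The only cosmetic difference is that the paper writes the final step directly as a chain of tensor identities $(\Theta(\pi_1)\otimes_{D_1}\pi_1)\otimes_{E_1,\varphi(w)}\bar R\simeq(\Theta(\pi_1)\otimes_{E_1,\varphi(w)}\bar R)\otimes_{D_1'}(m_1\,{}^w\rho_1)\simeq\Theta({}^w\rho_1)\otimes_{\bar R}{}^w\rho_1$ and then invokes Morita equivalence, whereas you spell out the Morita argument via the elementary idempotents $e_{i,i}$ as in the proof of Theorem \ref{thm:pi_coinvariants_scalar_extension}.
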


\begin{proof} Theorem \ref{thm:pi_coinvariants_scalar_extension} ensures that
$$V_{\pi_1} \otimes_R \bar{R} \simeq \bigoplus_{w \in \textup{Hom}_R(R(\rho_1),\bar{R})} (V \otimes_R \bar{R})_{{}^w \rho_1}.$$
Now we can use Theorem \ref{thm:big_theta_def} to obtain the first isomorphism of the lemma.

Any representation $V_{\pi_1} \otimes_{E_1,w'} \bar{R}$ with $w' \in \textup{Hom}_R(E_1, \bar{R})$ is isomorphic to one and only one of $(V \otimes_R \bar{R})_{{}^w \rho_1}$ with $w \in \textup{Hom}_R(R(\rho_1),\bar{R})$. This defines the bijection $\varphi$.

To end the proof, the isomorphism $(\Theta(\pi_1) \otimes_{D_1} \pi_1) \otimes_{E_1,\varphi(w)} \bar{R} \simeq \Theta({}^w \rho_1) \otimes_{\bar{R}} {}^w \rho_1$ induces the desired isomorphism thanks to the fact that $D_1' = D_1 \otimes_{E_1,\varphi(w)} \otimes \bar{R} \simeq \mathcal{M}_{m_1}(\bar{R})$ and
\begin{eqnarray*}
(\Theta(\pi_1) \otimes_{D_1} \pi_1) \otimes_{E_1,\varphi(w)} \bar{R}  & \simeq &  (\Theta(\pi_1) \otimes_{E_1,\varphi(w)} \bar{R}) \otimes_{D_1'} (\pi_1 \otimes_{E_1,\varphi(w)} \bar{R}) \\
  & \simeq & (\Theta(\pi_1) \otimes_{E_1,\varphi(w)} \bar{R}) \otimes_{D_1'} (m_1 {}^w\rho_1) \\
   & \simeq & \Theta({}^w \rho_1) \otimes_{\bar{R}} {}^w \rho_1.
\end{eqnarray*} 
By Morita equivalence of $\mathcal{M}_{m_1}(\bar{R})$ and $\bar{R}$, we obtain $\Theta(\pi_1) \otimes_{E_1,\varphi(w)} \bar{R} \simeq m_1 \Theta({}^w\rho_1)$. \end{proof}

\subsection{} The three propositions below prove the equivalence for each statement.

\begin{prop} The following assertions are equivlent
\begin{enumerate}[label=\textup{\alph*)}]
\item the $R[H_2]-D_1$-bimodule $\Theta(\pi_1)$ has finite length;
\item all representations $\Theta({}^w \rho_1)$ have finite length;
\item there exists ${}^w \rho_1$ such that $\Theta({}^w \rho_1)$ has finite length.
\end{enumerate} \end{prop}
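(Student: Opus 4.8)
The plan is to leverage Lemma \ref{lem:big_theta_scalar_extension}, which already packages the relationship between $\Theta(\pi_1)$ as an $R[H_2]$-$D_1$-bimodule and the family $\left( \Theta({}^w \rho_1) \right)_w$ of $\bar{R}[H_2]$-modules. The key structural facts are: first, $\Theta(\pi_1) \otimes_{E_1, \varphi(w)} \bar{R} \simeq m_1 \Theta({}^w \rho_1)$ for each embedding $w$, via the splitting $D_1 \otimes_{E_1, \varphi(w)} \bar{R} \simeq \mathcal{M}_{m_1}(\bar{R})$; and second, since $E_1 \simeq R(\rho_1)$ is a finite field extension of $R$, we have $\Theta(\pi_1) \otimes_{E_1} \bar{R} \simeq \bigoplus_{w' \in \textup{Hom}_R(E_1, \bar{R})} \Theta(\pi_1) \otimes_{E_1, w'} \bar{R}$ as $\bar{R}[H_2]$-modules.

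First I would reduce the notion of finite length of the $R[H_2]$-$D_1$-bimodule $\Theta(\pi_1)$ to finite length of $\Theta(\pi_1)$ as an $E_1[H_2]$-module. Since $D_1$ is a finite-dimensional division algebra over its centre $E_1$, a $D_1$-submodule is in particular an $E_1$-submodule, and conversely Morita-type considerations (or simply finite-dimensionality of $D_1$ over $E_1$) show that an $R[H_2]$-$D_1$-subbimodule lattice is cofinal with the $E_1[H_2]$-submodule lattice; hence one has finite length if and only if the other does. Alternatively, since $E_1$ is the centre and the right $D_1$-action is $H_2$-equivariant, finite length as $R[H_2]$-$D_1$-bimodule is equivalent to finite length as $E_1[H_2]$-module after extending scalars through the splitting field, but the cleanest route is the finite-dimensionality argument. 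Next I would use faithful flatness of the field extension $\bar{R}/E_1$: an $E_1[H_2]$-module $M$ has finite length if and only if $M \otimes_{E_1} \bar{R}$ has finite length as an $\bar{R}[H_2]$-module, because any strictly increasing chain of submodules stays strictly increasing after the flat base change, and conversely a composition series of $M \otimes_{E_1} \bar{R}$ bounds the length of $M$ (the length can only grow under base change, but it stays finite). Combining this with the decomposition $\Theta(\pi_1) \otimes_{E_1} \bar{R} \simeq \bigoplus_w m_1 \Theta({}^w \rho_1)$ from Lemma \ref{lem:big_theta_scalar_extension} gives that (a) holds if and only if every $\Theta({}^w \rho_1)$ has finite length, which is (b).

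For the equivalence of (b) and (c), the point is that the various $\Theta({}^w \rho_1)$ are Galois conjugates of one another: the group $\textup{Gal}_R(\bar{R})$ acts transitively on the set $\textup{Hom}_R(R(\rho_1), \bar{R}) = \mathcal{O}_{\rho_1}$ by Theorem \ref{thm:decomposition_scalar_extension}, and applying an automorphism $\sigma \in \textup{Gal}_R(\bar{R})$ to the defining construction $V \otimes_R \bar{R}$ (which is itself defined over $R$, hence $\sigma$-stable up to canonical isomorphism) sends the ${}^w \rho_1$-isotypic quotient to the ${}^{\sigma w} \rho_1$-isotypic quotient, so ${}^\sigma \Theta({}^w \rho_1) \simeq \Theta({}^{\sigma w} \rho_1)$. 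Since $\sigma$ is an additive (ring) automorphism of $\bar{R}$, it preserves the lattice of submodules and therefore preserves finite length; hence if one member of the orbit has finite length, all do. This closes the cycle (a) $\Rightarrow$ (b) $\Rightarrow$ (c) $\Rightarrow$ (a), the last implication being trivial once (c) $\Leftrightarrow$ (b) is established.

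I expect the main obstacle to be the careful bookkeeping in the first reduction — passing between "finite length as an $R[H_2]$-$D_1$-bimodule", "finite length as an $E_1[H_2]$-module", and "finite length after $\otimes_{E_1} \bar{R}$" — since one must make sure that no length is lost or gained in a way that breaks the equivalence, and that the right $D_1$-module structure is handled consistently with the $E_1$-linear structure appearing in Lemma \ref{lem:big_theta_scalar_extension}. The Galois-descent step for (b) $\Leftrightarrow$ (c) is comparatively routine given Theorem \ref{thm:decomposition_scalar_extension}, but I would be careful to spell out that the isomorphism $V \otimes_R \bar{R} \simeq {}^\sigma(V \otimes_R \bar{R})$ is $G_1 \times G_2$-equivariant so that it genuinely transports isotypic quotients.
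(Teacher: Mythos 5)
Your route differs from the paper's. The paper proves a)$\Rightarrow$b) by passing to the representation $\Theta(\pi_1)\otimes_{D_1}\pi_1$ of $H_1\times H_2$, invoking Lemma~\ref{lem:produit_tensoriel_factorisation} to control its length, extending scalars, and then reading off the conclusion via Lemma~\ref{lem:big_theta_scalar_extension}; it proves c)$\Rightarrow$a) by the same compatibility of $\Theta(\pi_1)\otimes_{E_1,\varphi(w)}\bar R\simeq m_1\,\Theta({}^w\rho_1)$ with the $\mathcal{M}_{m_1}(\bar R)$-action and faithfully flat descent. You instead work entirely on the $H_2$-side, reducing the bimodule length condition to an $E_1[H_2]$-length condition, then to an $\bar R[H_2]$-length condition, and you handle (b)$\Leftrightarrow$(c) via Galois conjugacy (${}^\sigma\Theta({}^w\rho_1)\simeq\Theta({}^{\sigma w}\rho_1)$), which is a clean variant that works and is not what the paper does. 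The trade-off is that your route avoids Lemma~\ref{lem:produit_tensoriel_factorisation} altogether, but it introduces a reduction the paper never needs.

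That reduction is where your argument is currently under-justified. You assert that finite length as an $R[H_2]$-$D_1$-bimodule is equivalent to finite length as an $E_1[H_2]$-module, citing only ``Morita-type considerations'' and ``cofinal lattices.'' The backward implication is indeed trivial (fewer submodules), but the forward implication is the substantive one and is not a formal lattice observation. A correct justification is: if $M$ has finite length over $B := \mathcal{H}_{E_1}(H_2)\otimes_{E_1}D_1^{\textup{op}}$, base change along a finite splitting field $L/E_1$ of $D_1$; since any $B_L$-submodule of $M\otimes_{E_1}L$ is a $B$-submodule and $M\otimes_{E_1}L\simeq M^{\oplus[L:E_1]}$ as $B$-modules, $M\otimes_{E_1}L$ has finite length over $B_L=\mathcal{H}_L(H_2)\otimes_L\mathcal{M}_{m_1}(L)$, hence (Morita) over $\mathcal{H}_L(H_2)$, hence by faithfully flat descent $M$ has finite length over $\mathcal{H}_{E_1}(H_2)$. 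As written, your proposal names the right ingredients but does not supply this argument, and ``cofinal'' is not the relevant notion. With this filled in, the rest of your chain (faithful flatness of $\bar R/E_1$, admissibility of subquotients so that scalar extension preserves finite length, and the Galois transitivity on $\mathcal{O}_{\rho_1}$) is sound.
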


\begin{proof} We only need to show that a) $\Rightarrow$ b) and c) $\Rightarrow$ a), since b) $\Rightarrow$ c) is obvious.

For a) $\Rightarrow$ b), we first recall that irreducible representations are admissible thanks to Corollary \ref{cor:irreducible_implies_admissible} and the remark thereafter. Therefore, if the $R[H_2]-D_1$-bimodule $\Theta(\pi_1)$ has finite length, Lemma \ref{lem:produit_tensoriel_factorisation} implies that $\Theta(\pi_1) \otimes_{D_1} \pi_1$ is an admissible representation of finite length in $\textup{Rep}_R(H_1 \times H_2)$. By extending scalars to $\bar{R}$ as in Theorem \ref{thm:decomposition_scalar_extension}, the representation $(\Theta(\pi_1)\otimes_{D_1} \pi_1 ) \otimes_R \bar{R}$ has finite length in $\textup{Rep}_{\bar{R}}(H_1 \times H_2)$. Then Lemma \ref{lem:big_theta_scalar_extension} allows us to conclude that $\Theta({}^w \rho_1)$ has finite length for all ${}^w \rho_1$.

For the last implication c) $\Rightarrow$ a), we use Lemma \ref{lem:big_theta_scalar_extension} again. The isomorphism
$$\Theta(\pi_1) \otimes_{E_1,\varphi(w)} \bar{R} \simeq m_1 \Theta({}^w \rho_1)$$
is compatible with the right action of $D_1 \otimes_{E_1,\varphi(w)} \bar{R} \simeq \mathcal{M}_{m_1}(\bar{R})$, therefore the $\bar{R}[H_2]-\mathcal{M}_{m_1}(\bar{R})$-bimodule $\Theta(\pi_1) \otimes_{E_1,\varphi(w)} \bar{R}$ has finite length provided $\Theta(\rho_1^w)$ has finite length in $\textup{Rep}_{\bar{R}}(H_2)$. This implies that $\Theta(\pi_1)$ is an $R[H_2]-D_1$-bimodule of finite length. \end{proof}

\begin{prop} Suppose $\Theta(\pi_1)$ is an $R[H_2]-D_1$-bimodule of finite length and denote by $\theta(\pi_1)$ its co-socle. When $\Theta(\pi_1) \neq 0$, the following assertions are equivalent
\begin{enumerate}[label=\textup{\alph*)}]
\item the $R[H_2]-D_1$-bimodule $\theta(\pi_1)$ is irreducible;
\item all representations $\theta({}^w \rho_1)$ are irreducible;
\item there exists ${}^w \rho_1$ such that $\theta(w \rho_1)$ is irreducible.
\end{enumerate} \end{prop}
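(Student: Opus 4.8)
The plan is to mimic the structure of the previous proposition on finite length, replacing "finite length" by "has irreducible co-socle" throughout. As before, b) $\Rightarrow$ c) is obvious, so one only needs a) $\Rightarrow$ b) and c) $\Rightarrow$ a). The main tool is Lemma \ref{lem:big_theta_scalar_extension}, which relates $\Theta(\pi_1)$ to the $\Theta({}^w\rho_1)$ after scalar extension, together with the fact, established in the ambient paragraphs via Corollary \ref{cor:irreducible_implies_admissible}, that every irreducible representation here is admissible so that Lemma \ref{lem:produit_tensoriel_factorisation} applies.

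For a) $\Rightarrow$ b), suppose the $R[H_2]$-$D_1$-bimodule $\theta(\pi_1)$ is irreducible. Since $\theta(\pi_1)$ is by definition the co-socle of $\Theta(\pi_1)$, the quotient map $\Theta(\pi_1)\twoheadrightarrow\theta(\pi_1)$ of bimodules yields, after applying $-\otimes_{D_1}\pi_1$ (which is exact since $D_1$ is a division algebra), a surjection $\Theta(\pi_1)\otimes_{D_1}\pi_1\twoheadrightarrow\theta(\pi_1)\otimes_{D_1}\pi_1$ in $\textup{Rep}_R(H_1\times H_2)$, and by Lemma \ref{lem:produit_tensoriel_factorisation} the target is irreducible; one should check that this is the co-socle of $\Theta(\pi_1)\otimes_{D_1}\pi_1$, which follows because any further irreducible quotient would factor through the co-socle of the bimodule. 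Now extend scalars to $\bar R$: by the first isomorphism of Lemma \ref{lem:big_theta_scalar_extension}, $(\theta(\pi_1)\otimes_{D_1}\pi_1)\otimes_R\bar R$ decomposes as $\bigoplus_w \theta({}^w\rho_1)\otimes_{\bar R}{}^w\rho_1$ up to identifying the co-socle on both sides — more precisely, the co-socle commutes with the étale base change $R\to\bar R$ (this uses that $\pi_1\otimes_R\bar R$ is semisimple, Theorem \ref{thm:decomposition_scalar_extension}, so the radical behaves well), hence each summand $\theta({}^w\rho_1)\otimes_{\bar R}{}^w\rho_1$ is irreducible, and therefore each $\theta({}^w\rho_1)$ is irreducible or $0$. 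For c) $\Rightarrow$ a), run the argument backwards: if $\theta({}^w\rho_1)$ is irreducible for one $w$, then by the second isomorphism of Lemma \ref{lem:big_theta_scalar_extension}, $\Theta(\pi_1)\otimes_{E_1,\varphi(w)}\bar R\simeq m_1\Theta({}^w\rho_1)$ as $\bar R[H_2]$-$\mathcal M_{m_1}(\bar R)$-bimodules, whose co-socle is $m_1\theta({}^w\rho_1)$, which is irreducible as a bimodule over $\mathcal M_{m_1}(\bar R)$ by Morita equivalence; descending along the faithfully flat $E_1\to\bar R$ (and using that the co-socle of an $R$-module commutes with a separable field extension since $R$ is perfect) forces the co-socle $\theta(\pi_1)$ of the $R[H_2]$-$D_1$-bimodule $\Theta(\pi_1)$ to be irreducible.

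The step I expect to be the main obstacle is the precise claim that taking the co-socle commutes with scalar extension, in the appropriate bimodule sense — i.e. that the Jacobson radical of $\Theta(\pi_1)$ as an $R[H_2]$-$D_1$-bimodule base changes correctly to $\bar R$ and picks out exactly the product of the radicals of the $\Theta({}^w\rho_1)$. For the finite-length previous proposition this was automatic, but for the co-socle one must use that $\bar R/R$ is Galois (as $R$ is perfect) so that the extension is étale/separable, that finite length is already known on both sides from the preceding proposition, and that $D_1\otimes_{E_1,\varphi(w)}\bar R\simeq\mathcal M_{m_1}(\bar R)$ is semisimple; combining these, the radical of a finite-length module is compatible with the base change and the co-socle descends. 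Once that lemma-like statement is pinned down, the rest is a formal diagram chase identical in shape to the finite-length case.
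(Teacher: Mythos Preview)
Your overall strategy of reducing everything to the single lemma ``co-socle commutes with separable base change'' is appealing, and that lemma is indeed correct for finite-length modules over a perfect field. Your argument for c) $\Rightarrow$ a) is essentially the paper's: both amount to observing that $\theta(\pi_1)\otimes_{E_1,\varphi(w)}\bar R$ is a semisimple quotient of $m_1\Theta({}^w\rho_1)$, hence factors through $m_1\theta({}^w\rho_1)$, and then comparing lengths. The paper phrases this as a contrapositive (if $\theta(\pi_1)$ has bimodule length $\ge 2$ then its base change has length $\ge 2m_1$, forcing $\theta({}^w\rho_1)$ to have length $\ge 2$), but the content is the same.

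The real issue is a) $\Rightarrow$ b). Even granting that co-socle commutes with base change, your inference ``hence each summand $\theta({}^w\rho_1)\otimes_{\bar R}{}^w\rho_1$ is irreducible'' does not follow from what you have written. You know that $\theta(\pi_1)\otimes_{D_1}\pi_1$ is irreducible over $R$, and that its base change to $\bar R$ decomposes as the direct sum over $w$; but the base change of an irreducible is only \emph{semisimple}, and Theorem \ref{thm:decomposition_scalar_extension} tells you it is of the form $m'\bigoplus_u{}^u\rho$ for some multiplicity $m'$ and a Galois orbit indexed by $u$. Matching this against the $w$-indexed decomposition does not automatically force each $w$-summand to be a single irreducible: you would still need to control the multiplicity $m'$ and to show that the orbit parametrised by $u$ cannot be strictly finer than the one parametrised by $w$. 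Your sketch does not address this, and the co-socle lemma by itself is not enough to close the gap.

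The paper does not attempt this direct route at all. It argues a) $\Rightarrow$ b) by contraposition and by \emph{restriction of scalars} rather than extension: starting from two irreducible quotients $\tau_2,\tau_2'$ of $\Theta({}^w\rho_1)$, it restricts $(\tau_2\otimes{}^w\rho_1)$ and $(\tau_2'\otimes{}^w\rho_1)$ to $R$, uses that these restrictions are $\pi_1$-isotypic, and invokes Lemma \ref{lem:pi_coinvariant} to extract irreducible $R[H_2]$--$D_1$-bimodules $\pi_2,\pi_2'$. It then uses the natural map of $\Theta(\pi_1)\otimes_{D_1}\pi_1$ into $(\Theta(\pi_1)\otimes_{D_1}\pi_1)\otimes_{E_1,\varphi(w)}\bar R$ to produce a quotient $(\pi_2\oplus\pi_2')\otimes_{D_1}\pi_1$ of $\Theta(\pi_1)\otimes_{D_1}\pi_1$, and reads off from Lemma \ref{lem:pi_coinvariant} a bimodule quotient $\Theta(\pi_1)\twoheadrightarrow\pi_2\oplus\pi_2'$. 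This is a genuinely different mechanism from yours: it never needs the irreducible-after-base-change step you are missing, because it works with concrete quotients and moves information \emph{down} from $\bar R$ to $R$ via restriction of scalars rather than \emph{up}.
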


\begin{proof} As in the previous proof, the implication b) $\Rightarrow$ c) is obvious. 

For a) $\Rightarrow$ b), we prove the contraposition. We then suppose there exists ${}^w \rho_1$ such that $\theta({}^w \rho_1)$ is not irreducible. We want to show there exist two irreducible representations $\tau_2$ and $\tau_2'$ in $\textup{Rep}_{\bar{R}}^{\textup{gen}}(H_2)$ such that $\Theta({}^w \rho_1) \otimes_{\bar{R}} ({}^w \rho_1)$ admits $(\tau_2 \otimes_{\bar{R}} ({}^w \rho_1)) \oplus (\tau_2' \otimes_{\bar{R}} ({}^w \rho_1))$ as a quotient.

On the one hand, the representation $(\tau_2 \otimes_{\bar{R}} ({}^w \rho_1))|_R$ of $H_1$ is $\pi_1$-isotypic. According to Lemme \ref{lem:pi_coinvariant}, there exists an $R[H_2]-D_1$-bimodule $\sigma_2$ such that $(\tau_2 \otimes_{\bar{R}} ({}^w \rho_1))|_R \simeq \sigma_2 \otimes_{D_1} \pi_2$. Moreover, it is semisimple as a representation in $\textup{Rep}_R(H_1 \times H_2)$. So $\sigma_2$ is isotypic and we let $\pi_2$ be any irreducible factor of $\sigma_2$. We use similar notations for $\tau_2'$ and $\sigma_2'$ and $\pi_2'$.

On the other hand, Lemma \ref{lem:big_theta_scalar_extension} implies $\Theta({}^w \rho_1) \otimes_{\bar{R}} ({}^w \rho_1) \simeq (\Theta(\pi_1) \otimes_{D_1} \pi_1) \otimes_{E_1,\varphi(w)} \bar{R}$. We deduce that the obvious morphism of $\Theta(\pi_1) \otimes_{D_1} \pi_1$ in the right-hand side guarantees that $\Theta(\pi_1) \otimes_{D_1} \pi_1$ admits $(\pi_2 \otimes_{D_1} \pi_1) \oplus (\pi_2' \otimes_{D_1} \pi_1)$ as a quotient. The kernel of this quotient map is of the form $\sigma_2'' \otimes_{D_1} \pi_1$ where $\sigma_2''$ is a sub-$R[H_2]-D_1$-bimodule of $\Theta(\pi_1)$ by Lemma \ref{lem:pi_coinvariant}. As a result, this quotient map induces a quotient $\Theta(\pi_1) \to \pi_2 \oplus \pi_2'$ of $R[H_2]-D_1$-bimodules whose kernel is precisely $\sigma_2''$. Hence $\theta(\pi_1)$ is not irreducible.

Finally, the implication c) $\Rightarrow$ a), we will prove the contraposition again. We then suppose that $\theta(\pi_1)$ is not irreducible. We need to show that for all ${}^w \rho_1$, the representation $\theta({}^w \rho_1)$ is not irreducible. However $\Theta(\pi_1) \otimes_{D_1} \pi_1$ admits $\theta(\pi_1) \otimes_{D_1} \pi_1$ as a quotient. As a result $(\Theta(\pi_1) \otimes_{D_1} \pi_1)\otimes_R \bar{R}$ admits $(\theta(\pi_1) \otimes_{D_1} \pi_1) \otimes_R \bar{R}$ as a quotient. But $(\Theta(\pi_1) \otimes_{D_1} \pi_1)\otimes_{E_1,\varphi(w)} \bar{R}$ admits $(\theta(\pi_1) \otimes_{D_1} \pi_1) \otimes_{E_1,\varphi(w)} \pi_1$ and $\theta(\pi_1) \otimes_{E_1,\varphi(w)} \bar{R} = m_1 \theta({}^w \rho_1)$ as a quotient. Hence $\theta({}^w \rho_1)$ is not irreducible because $\theta(\pi_1)$ is not and the scalar extension functor is exact, therefore $\theta(\pi_1) \otimes_{E_1,\varphi(w)} \bar{R}$ has length at least $2 m_1$. \end{proof}

\begin{prop} Suppose $\Theta(\pi_1)$ and $\Theta(\pi_1')$ are two $R[H_2]-D_1$-bimodules of finite length whose respective co-socles $\theta(\pi_1)$ and $\theta(\pi_1')$ are irreducible. Let $\rho_1$ and $\rho_1'$ be irreducible representations contained in the scalar extension to $\bar{R}$ of $\pi_1$ and $\pi_1'$. Assume $D_1 = \textup{End}_{R[H_1]}(\pi_1) \simeq \textup{End}_{R[H_1]}(\pi_1')$. Then the following assertions are equivalent
\begin{enumerate}[label=\textup{\alph*)}]
\item $\theta(\pi_1) \simeq \theta(\pi_1')$ as $R[H_2]-D_1$-bimodules;
\item for all ${}^w \rho_1$, there exists ${}^{w'} \rho_1'$ such that $\theta({}^w \rho_1) \simeq \theta({}^{w'} \rho_1')$;
\item there exists ${}^w \rho_1$ and ${}^{w'} \rho_1'$ such that $\theta({}^w \rho_1) \simeq \theta({}^w \rho_1')$.
\end{enumerate} \end{prop}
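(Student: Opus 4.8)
The plan is to prove the three equivalences following the same pattern as the previous two propositions, leveraging Lemma \ref{lem:big_theta_scalar_extension} to translate the cosocles over $R$ into cosocles over $\bar R$. As before, the implication b) $\Rightarrow$ c) is immediate since $\textup{Hom}_R(R(\rho_1),\bar R)$ is non-empty. The substance is in a) $\Rightarrow$ b) and c) $\Rightarrow$ a).

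For a) $\Rightarrow$ b), I would start from an isomorphism $\theta(\pi_1) \simeq \theta(\pi_1')$ of $R[H_2]$-$D_1$-bimodules (using the fixed identification $\textup{End}_{R[H_1]}(\pi_1) \simeq \textup{End}_{R[H_1]}(\pi_1')$). Applying scalar extension along an embedding $\varphi(w) : E_1 \hookrightarrow \bar R$ and using Lemma \ref{lem:big_theta_scalar_extension}, the bimodule identification $D_1 \otimes_{E_1,\varphi(w)} \bar R \simeq \mathcal{M}_{m_1}(\bar R)$ gives $m_1 \theta({}^w\rho_1) \simeq \theta(\pi_1) \otimes_{E_1,\varphi(w)} \bar R \simeq \theta(\pi_1') \otimes_{E_1,\varphi(w)} \bar R \simeq m_1 \theta({}^{w'}\rho_1')$, where $w'$ is the embedding matching $w$ under the bijection coming from identifying $E_1 \simeq R(\rho_1)$ and $E_1 \simeq R(\rho_1')$ via the centre of the common $D_1$; here I need the compatibility of $\varphi$ with the Galois actions established in Lemma \ref{lem:big_theta_scalar_extension} and Theorem \ref{thm:decomposition_scalar_extension} to make the assignment $w \mapsto w'$ well-defined. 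Since scalar extension is exact and $\theta({}^w\rho_1)$, $\theta({}^{w'}\rho_1')$ are irreducible (by the previous proposition), $m_1 \theta({}^w\rho_1) \simeq m_1 \theta({}^{w'}\rho_1')$ forces $\theta({}^w\rho_1) \simeq \theta({}^{w'}\rho_1')$, which is b).

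For c) $\Rightarrow$ a), suppose $\theta({}^w\rho_1) \simeq \theta({}^{w'}\rho_1')$ for some pair of embeddings. First I would use Theorem \ref{thm:decomposition_scalar_extension} and Lemma \ref{lem:scalar_restriction_uniqueness} to see that $\pi_1 = \pi({}^w\rho_1,R)$ and $\pi_1' = \pi({}^{w'}\rho_1',R)$, so that the common centre $E_1$ gets identified with $R(\rho_1)$ and with $R(\rho_1')$, and the relevant embeddings $\varphi(w)$, $\varphi(w')$ of $E_1$ into $\bar R$ coincide. Then restricting the given isomorphism along $E_1$ (equivalently, passing from $\theta({}^w\rho_1)$ back down to $\theta(\pi_1)$ via $\theta(\pi_1)\otimes_{E_1,\varphi(w)}\bar R \simeq m_1\theta({}^w\rho_1)$ and likewise for $\pi_1'$) yields $m_1\theta(\pi_1)\otimes_{E_1}\bar R \simeq m_1\theta(\pi_1')\otimes_{E_1}\bar R$ as $\bar R[H_2]$-$\mathcal{M}_{m_1}(\bar R)$-bimodules. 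Using Lemma \ref{lem:scalar_restriction_uniqueness} to descend (restriction of scalars from $\bar R$ or from a suitable finite extension back to $R$, which reflects isomorphism of irreducibles) and the uniqueness part of Lemma \ref{lem:pi_coinvariant}/Theorem \ref{thm:big_theta_def} for the bimodule structure, I get $\theta(\pi_1) \simeq \theta(\pi_1')$ as $R[H_2]$-$D_1$-bimodules, which is a).

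The main obstacle I anticipate is bookkeeping the bimodule structure and the matching of embeddings: I need to be careful that the isomorphism $E_1 \simeq R(\rho_1) \simeq R(\rho_1')$ is canonical enough that ``${}^w\rho_1$ and ${}^{w'}\rho_1'$ correspond to the same embedding of $E_1$'' is a meaningful statement, and that the scalar-extension/restriction functors genuinely detect isomorphism of the cosocles \emph{together with} their right $\mathcal{M}_{m_1}(\bar R)$- resp. $D_1$-action (rather than merely as $H_2$-representations). This is exactly the role played by the compatibility clause in Lemma \ref{lem:big_theta_scalar_extension} and by the uniqueness of $\Theta$ as a bimodule in Theorem \ref{thm:big_theta_def}, so the proof should go through by invoking these carefully; everything else is a routine transport of the arguments used in the preceding two propositions.
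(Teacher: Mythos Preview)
Your argument for a) $\Rightarrow$ b) is essentially the paper's: tensor the bimodule isomorphism along an embedding $\varphi(w)$ of $E_1$ and read off $m_1\theta({}^w\rho_1)\simeq m_1\theta({}^{w'}\rho_1')$, hence $\theta({}^w\rho_1)\simeq\theta({}^{w'}\rho_1')$.

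For c) $\Rightarrow$ a), however, there is a genuine gap. You assert that after identifying $E_1\simeq R(\rho_1)\simeq R(\rho_1')$ via the hypothesis $D_1\simeq\textup{End}_{R[H_1]}(\pi_1')$, the two embeddings $\varphi(w)$ and $\varphi'(w')$ of $E_1$ into $\bar R$ \emph{coincide}. This is not justified: the bijection $\varphi$ of Lemma~\ref{lem:big_theta_scalar_extension} is built from $\pi_1$ and $\rho_1$, while $\varphi'$ is built from $\pi_1'$ and $\rho_1'$, and nothing in the hypothesis $\theta({}^w\rho_1)\simeq\theta({}^{w'}\rho_1')$ forces the associated embeddings of $E_1$ to agree. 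Consequently your displayed isomorphism $\theta(\pi_1)\otimes_{E_1}\bar R\simeq\theta(\pi_1')\otimes_{E_1}\bar R$ (along a single, common embedding) does not follow.

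The paper does not try to match individual embeddings. Instead it propagates the single isomorphism $\theta({}^w\rho_1)\simeq\theta({}^{w'}\rho_1')$ through the Galois group: for each $\sigma\in\textup{Gal}_R(\bar R)$ one has ${}^\sigma\big(\theta(\pi_1)\otimes_{E_1,\varphi(w)}\bar R\big)\simeq\theta(\pi_1)\otimes_{E_1,\varphi(\sigma w)}\bar R$, and likewise for $\pi_1'$, so $\theta({}^{\sigma w}\rho_1)\simeq\theta({}^{\sigma w'}\rho_1')$ for all $\sigma$. Summing over the full orbit gives an isomorphism $\theta(\pi_1)\otimes_R\bar R\simeq\theta(\pi_1')\otimes_R\bar R$ of $\bar R[H_2]$--$(D_1\otimes_R\bar R)$-bimodules. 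Restricting scalars to $R$, this module is simultaneously $\theta(\pi_1)$-isotypic and $\theta(\pi_1')$-isotypic as an $R[H_2]$--$D_1$-bimodule, which forces $\theta(\pi_1)\simeq\theta(\pi_1')$. Note also that the descent is handled by this isotypicity argument for bimodules, not by Lemma~\ref{lem:scalar_restriction_uniqueness}, which is formulated for $G$-representations rather than for $R[H_2]$--$D_1$-bimodules.
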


\begin{proof} The implication b) $\Rightarrow$ c) is still obvious.

For a) $\Rightarrow$ b), the isomorphism $\theta(\pi_1) \otimes_{E_1,\varphi(w)} \bar{R} \simeq m_1 \theta({}^w \rho_1)$ is an isomorphism of $\bar{R}[H_2]-(D_1 \otimes_{E_1,\varphi(w)} \bar{R})$-bimodules. And likewise for $\theta(\pi_1')$ and some $\theta({}^{w'}\rho_1')$. If $\theta(\pi_1) \simeq \theta(\pi_1')$, then $\theta({}^w \rho_1) \simeq \theta({}^{w'} \rho_1')$ in $\textup{Rep}_{\bar{R}}(H_2)$.

Regarding the last implication c) $\Rightarrow$ a), there exists thanks to the previous paragraph an isomorphism $\theta(\pi_1) \otimes_{E_1,\varphi(w)} \bar{R} \simeq \theta(\pi_1') \otimes_{E_1,\varphi'(w')} \bar{R}$ because $\theta({}^w \rho_1) \simeq \theta({}^{w'} \rho_1')$. Let $\sigma \in \textup{Gal}_R(\bar{R})$. Then ${}^{\sigma}(\theta(\pi_1) \otimes_{E_1,\varphi(w)} \bar{R}) \simeq \theta(\pi_1) \otimes_{E_1 , \varphi(\sigma w)} \bar{R}$. In particular, this implies that $\theta({}^{\sigma w} \rho_1) \simeq \theta({}^{\sigma w'} \rho_1)$. Therefore, there exists a bijection $\psi$ of $\textup{Hom}_R(R(\rho_1),\bar{R})$ such that for all $w_0 \in \textup{Hom}_R(R(\rho_1),\bar{R})$, we have $\theta({}^{w_0} \rho_1) \simeq \theta({}^{\psi(w_0)} \rho_1')$. We thus have isomorphisms of $\bar{R}[H_2]-(D_1\otimes_R \bar{R})$-bimodules
$$\theta(\pi_1) \otimes_R \bar{R} \simeq \oplus_{w_0} \theta(\pi_1) \otimes_{E_1 , \varphi(w_0)} \bar{R} \simeq \theta(\pi_1') \otimes_R \bar{R}.$$
By restriction of scalars, $(\theta(\pi_1) \otimes_R \bar{R})|_R$ is an $R[H_2]-D_1$-bimodule which is at the same time $\theta(\pi_1)$-isotypic and $\theta(\pi_1')$-isotypic. Therefore $\theta(\pi_1) \simeq \theta(\pi_1')$. \end{proof}

\subsection{} We now assume $R$ is algebraically closed. Let $V \in \textup{Rep}_R(H_1 \times H_2)$, we can define $\Theta_V$ as earlier. We add the subscript $V$ here because we want to be able to consider $\Theta_{V'}$ for another $V' \in \textup{Rep}_R(H_1 \times H_2)$. We have a compatibility of the isotypic lifts with the Galois action in the following sense:

\begin{theo} \label{thm:theta_lifts_and_galois_action} The isotypic lifts are compatible with the action of $\textup{Gal}_{R_0}(R)$ in the sense that, for all $\sigma \in \textup{Gal}_{R_0}(R)$ and for all $\pi_1 \in \textup{Irr}_R(H_1)$, we have
$${}^\sigma \Theta_V(\pi_1) \simeq \Theta_{{}^\sigma V}({}^\sigma \pi_1).$$ \end{theo}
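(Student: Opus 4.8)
The plan is to unwind the definition of the isotypic lift $\Theta_V(\pi_1)$ from Lemma \ref{lem:pi_coinvariant} and to check, step by step, that each ingredient in that construction is carried by the functor $W \mapsto {}^\sigma W$ to the corresponding ingredient for ${}^\sigma V$ and ${}^\sigma\pi_1$. First I would recall that, since $R$ is algebraically closed, $D_1 = \textup{End}_{R[H_1]}(\pi_1) = R$, so $\Theta_V(\pi_1)$ is simply a representation in $\textup{Rep}_R(H_2)$ with $V_{\pi_1} \simeq \Theta_V(\pi_1) \otimes_R \pi_1$, characterised by $V_{\pi_1} = V/V[\pi_1]$ where $V[\pi_1] = \bigcap_{f \in \textup{Hom}_{H_1}(V,\pi_1)} \textup{Ker}(f)$. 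The key observation is that the Galois twist $W \mapsto {}^\sigma W$ on $\textup{Rep}_R(H_1 \times H_2)$ is an additive (indeed, in the precise sense of Vign\'eras \cite[II.4.1]{vig}, $\sigma$-semilinear) autoequivalence of the category; in particular it is exact and commutes with arbitrary subquotients, and it sends $\textup{Hom}_{H_1}(V,\pi_1)$ bijectively onto $\textup{Hom}_{H_1}({}^\sigma V, {}^\sigma\pi_1)$, $f \mapsto {}^\sigma f$, since conjugating a matrix identity by $\sigma$ preserves it.

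The main steps, in order, would be: (1) Show ${}^\sigma(V[\pi_1]) = ({}^\sigma V)[{}^\sigma\pi_1]$. This follows from the displayed intersection formula together with the bijection $\textup{Hom}_{H_1}(V,\pi_1) \xrightarrow{\sim} \textup{Hom}_{H_1}({}^\sigma V,{}^\sigma\pi_1)$ and the fact that applying $\sigma$ commutes with taking kernels and finite or infinite intersections of subspaces (the latter because ${}^\sigma$ is, on underlying sets, a semilinear bijection, hence a lattice isomorphism on subrepresentations). (2) Deduce ${}^\sigma(V_{\pi_1}) = {}^\sigma(V/V[\pi_1]) \simeq {}^\sigma V / {}^\sigma(V[\pi_1]) = {}^\sigma V / ({}^\sigma V)[{}^\sigma\pi_1] = ({}^\sigma V)_{{}^\sigma\pi_1}$, using exactness of the twist. (3) Apply the twist to the defining isomorphism $V_{\pi_1} \simeq \Theta_V(\pi_1) \otimes_R \pi_1$; since ${}^\sigma$ is compatible with tensor products over $R$ in the sense that ${}^\sigma(A \otimes_R B) \simeq {}^\sigma A \otimes_R {}^\sigma B$ for $A \in \textup{Rep}_R(H_2)$ and $B \in \textup{Rep}_R(H_1)$ (again a matrix-entry check on a tensor-product basis), we get $({}^\sigma V)_{{}^\sigma\pi_1} \simeq {}^\sigma\Theta_V(\pi_1) \otimes_R {}^\sigma\pi_1$. (4) By the uniqueness clause of Theorem \ref{thm:big_theta_def} applied to ${}^\sigma V$ and ${}^\sigma\pi_1$, the representation $\Theta_{{}^\sigma V}({}^\sigma\pi_1)$ is the \emph{unique} $H_2$-representation $\Xi$ with $({}^\sigma V)_{{}^\sigma\pi_1} \simeq \Xi \otimes_R {}^\sigma\pi_1$; hence ${}^\sigma\Theta_V(\pi_1) \simeq \Theta_{{}^\sigma V}({}^\sigma\pi_1)$.

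The step I expect to need the most care is step (3), namely pinning down that the twist functor is monoidal for the external tensor product $\textup{Rep}_R(H_2) \times \textup{Rep}_R(H_1) \to \textup{Rep}_R(H_1 \times H_2)$ and, more importantly, that the isomorphism it produces is one of $H_2$-representations (not merely of $R$-vector spaces) \emph{and} is compatible with the residual $D_1$-bimodule structure in the general non-algebraically-closed formulation --- here, though, since we have assumed $R = \overline{\mathbb{Q}}$ is algebraically closed, $D_1 = R$ and that subtlety evaporates, leaving only the bookkeeping that ${}^\sigma$ intertwines the $\otimes_R$ on both sides. A clean way to handle this uniformly is to fix compatible $R$-bases: choose a basis of $\pi_1$ and of $\Theta_V(\pi_1)$, take the product basis on the tensor product and on $V_{\pi_1}$, and observe that the matrix of $g \in H_1 \times H_2$ on the tensor product is the Kronecker product of the two factor matrices, an identity manifestly preserved by applying $\sigma$ entrywise; this reduces everything to the definition of ${}^\sigma(-)$ in \cite[II.4.1]{vig} and the independence of that construction from the choice of basis. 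A short remark noting that the same argument, with $D_1$ replaced by ${}^\sigma D_1$ and $\otimes_R$ by $\otimes_{D_1}$, gives the statement over a general perfect base field would round this off, but is not needed for the stated theorem.
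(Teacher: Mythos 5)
Your proposal is correct and takes essentially the same route as the paper's proof: establish ${}^\sigma(V_{\pi_1}) \simeq ({}^\sigma V)_{{}^\sigma\pi_1}$, apply $\sigma$ to the defining isomorphism $V_{\pi_1} \simeq \Theta_V(\pi_1)\otimes_R\pi_1$ using compatibility of the twist with $\otimes_R$, and conclude by the uniqueness clause of Theorem~\ref{thm:big_theta_def}. The only difference is that your steps (1)--(2) unwind the identification ${}^\sigma(V_{\pi_1}) \simeq ({}^\sigma V)_{{}^\sigma\pi_1}$ through the explicit description $V_{\pi_1}=V/V[\pi_1]$, whereas the paper asserts it directly from the surjection $V\twoheadrightarrow V_{\pi_1}$.
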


\begin{proof} The morphism $V \twoheadrightarrow V_{\pi_1}$ induces ${}^\sigma V \twoheadrightarrow ({}^\sigma V)_{{}^\sigma \pi_1}$ and ${}^\sigma (V_{\pi_1}) \simeq ({}^\sigma V)_{{}^\sigma \pi_1}$. Therefore ${}^\sigma(\Theta_V(\pi_1) \otimes_R \pi_1) \simeq {}^\sigma \Theta_V(\pi_1) \otimes_R {}^\sigma \pi_1 \simeq \Theta_{{}^\sigma V}({}^\sigma \pi_1) \otimes_R {}^\sigma \pi_1$, so ${}^\sigma \Theta_V(\pi_1) \simeq \Theta_{{}^\sigma V}({}^\sigma \pi_1)$ by Theorem \ref{thm:big_theta_def} and the uniqueness of the lift. \end{proof}

\part{Galois descent on the Weil representation}

\section{Galois descent as Morita equivalences}

The Galois descent theorems -- obtained by taking the fixed points under the action of some Galois group -- can be seen as a particular case of faithfully flat descent. We give here another interpretation of the Galois descent theorems in terms of Morita equivalences, which is a representation theoretic approach of these results.

\subsection{} Let $A$ be a Dedekind ring and let $A \to B$ be a (finite) étale morphism. Let $G = \textup{Aut}_A(B)$ be the finite group of ring automorphisms of $B$ that are $A$-linear.

\begin{defi} The twisted group algebra $B'[G]$ is the $A$-algebra on the free $B$-module $\bigoplus_{g \in G} B \cdot g$ of basis $G$ endowed with the twisted multiplication $(\alpha \cdot g) \times ( \beta \cdot g') = \alpha g(\beta) \cdot g g'$.\end{defi}

A module $V$ over $B'[G]$ is equivalently a $B$-module endowed with a semi-linear action of $G$. We have the following equivalence of categories:

\begin{theo} The category of $A$-modules is equivalent to the category of $B'[G]$-modules. Given a $B'[G]$-module $V$, the natural map $V^G \otimes_A B \to V$ is an isomorphism. \end{theo}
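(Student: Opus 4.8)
The plan is to prove this via faithfully flat descent, exploiting that $A \to B$ is finite étale and that $B'[G]$ is precisely the algebra whose modules encode descent data. First I would recall the standard fact that for a finite étale $A \to B$ with $G = \textup{Aut}_A(B)$ acting such that $B^G = A$ and the ``Galois condition'' holds --- namely the canonical map $B \otimes_A B \to \prod_{g \in G} B$, $b \otimes b' \mapsto (b \cdot g(b'))_g$, is an isomorphism --- the ring $B$ is a $G$-Galois extension of $A$ in the sense of Auslander--Goldman/Chase--Harrison--Rosenberg. This isomorphism is the technical heart: it says $B'[G] \simeq \textup{End}_A(B)$ as $A$-algebras, where $B$ is viewed as a projective $A$-module of rank $|G|$, via sending $\alpha \cdot g$ to the endomorphism $x \mapsto \alpha g(x)$.

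Granting that, the equivalence of categories becomes a Morita equivalence. Indeed $B'[G] \simeq \textup{End}_A(B)$, and since $B$ is a faithfully projective (finitely generated projective, faithful) $A$-module, the functors $\textup{Hom}_A(B, -) : \textup{Mod}_A \to \textup{Mod}_{\textup{End}_A(B)}$ and $B \otimes_{\textup{End}_A(B)} - $ in the other direction are mutually inverse equivalences by the classical Morita theorem. Transporting along $B'[G] \simeq \textup{End}_A(B)$, one checks that the functor $\textup{Mod}_A \to \textup{Mod}_{B'[G]}$ thus produced is naturally isomorphic to $M \mapsto M \otimes_A B$ equipped with its semilinear $G$-action $g(m \otimes b) = m \otimes g(b)$, and that the quasi-inverse sends a $B'[G]$-module $V$ to $V^G$. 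The naturality identification $\textup{Hom}_A(B,M) \simeq M \otimes_A B$ (using that $B$ is self-dual as an $A$-module via the trace form, a consequence of étaleness, or more simply using $B$ projective of finite rank) is routine but should be spelled out.

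For the final assertion, that $V^G \otimes_A B \to V$ is an isomorphism for every $B'[G]$-module $V$: this is exactly the statement that the unit of the adjunction/equivalence is an isomorphism, so once the equivalence is established it is automatic. Alternatively, and perhaps more transparently for the reader, I would argue directly: the map is $B$-linear and $G$-equivariant, so it suffices to check it is an isomorphism after the faithfully flat base change $A \to B$, i.e. after applying $- \otimes_A B$; using $B \otimes_A B \simeq \prod_{g \in G} B$ one computes $(V^G \otimes_A B) \otimes_A B \simeq (V \otimes_A B)^{G} $ appropriately and reduces to the split case where $V$ becomes $\prod_g V_g$ with $G$ permuting the factors, for which the claim is a direct verification. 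Either route works; I would present the abstract Morita route as the main proof and perhaps remark on the concrete one.

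The main obstacle --- really the only non-formal point --- is establishing the isomorphism $B \otimes_A B \xrightarrow{\sim} \prod_{g \in G} B$ (equivalently $B'[G] \simeq \textup{End}_A(B)$). This uses that $A \to B$ is finite étale \emph{and} that $G$ is the full automorphism group: one typically needs $B$ to be connected, or to argue componentwise, so that $\textup{Spec}(B) \to \textup{Spec}(A)$ is a torsor under the constant group scheme $G$. If the paper wants $B$ possibly disconnected, a short reduction is needed; if $A$ is a field (the case that actually matters for the Weil representation application, via $A = \mathbb{Q}$ or a number field and $B$ a Galois extension), this is just the normal basis theorem plus linear independence of characters, which is the cleanest thing to cite. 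I would flag this and point to \cite[A VIII]{bou} or a standard reference for the Galois-extension-of-rings formalism, keeping the rest of the argument at the level of the Morita dictionary already in use elsewhere in the paper.
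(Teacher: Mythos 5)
Your proof takes essentially the same route as the paper's: both recognize that $B$ is a progenerator of $\textup{Mod}_A$, set up the Morita equivalence between $A$-modules and $\textup{End}_A(B)$-modules via $V \mapsto V \otimes_A B$, and reduce everything to the isomorphism $B'[G] \cong \textup{End}_A(B)$. Where you differ is in how that key isomorphism is established. You go through the Chase--Harrison--Rosenberg Galois condition $B \otimes_A B \xrightarrow{\sim} \prod_{g\in G} B$ (citing, for fields, the normal basis theorem plus linear independence of characters); the paper instead checks injectivity at the generic fibre and deduces surjectivity from the fact that the determinant of the natural map divides the discriminant, which is a unit by \'etaleness, localising to reduce to the case $B$ free. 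These are two well-trodden paths to the same statement and either is acceptable. You are also right to flag the implicit hypothesis: the theorem as written silently assumes $\textup{rk}_A B = |G|$, i.e.\ that $\textup{Spec}(B) \to \textup{Spec}(A)$ is a $G$-torsor, and without it the statement is false (take $B = A^n$, $G = S_n$ for $n \geq 3$: then $B'[G]$ has rank $n\cdot n!$ while $\textup{End}_A(B)$ has rank $n^2$). The paper does not state this hypothesis, but it holds in all its applications since $B$ is always a domain (a number field, a finite field, or a localised ring of integers). For the final assertion, both of your suggested arguments work; the paper's proof matches your first one: once the equivalence is in place, write $V = W \otimes_A B$ for some $A$-module $W$ and observe $(W\otimes_A B)^G = W$, so $W = V^G$.
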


This theorem can be seen as a consequence of Morita equivalences. Indeed, since $A \to B$ is étale, the module $B$ is a progenerator of the category of $A$-modules, therefore:
$$V \mapsto V \otimes_A B$$
is a Morita equivalence between the categories of $A$-modules and $\textup{End}_A(B)$-modules. However, the natural map:
$$B'[G] \to \textup{End}_A(B)$$
is an isomorphism. Indeed, it becomes an isomorphism at the generic fibre, this gives the injectivity. We claim the map is surjective because no prime in $A$ ramifies in $B$. It is easy to see the isomorphism holds when $B$ is free over $A$ as the determinant of the image of $g \in G$ in $\textup{End}_A(B)$ divides the discriminant, which is invertible by our ramification hypothesis. In general $B$ is locally free, so we reduce the claim to the free case by localisation.

We need to explain why $V^G \otimes_A B \to V$ is an isomorphism. As a consequence of the previous equivalence of categories, there exists an\ $A$-submodule $W$ of $V$ such that $V = W \otimes_A B$. But $(W \otimes_A B)^G = W$, so we deduce that $W = V^G$.

\subsection{} We can generalise the Galois descent theorem above when $A \to B$ is proétale \textit{i.e.} when we can write $B$ as a limit of $B_i$'s where $A \to B_i$ is étale. It is no longer true that the group $G = \textup{Aut}_A(B)$ is finite, but it is profinite. If we set $G_i=\textup{Aut}_A(B_i)$, we have:
$$G = \underset{\leftarrow}{\textup{lim}} \ G_i.$$
The profinite case almost works in the same way as the finite case, except that the action must now be coming from finite groups \textit{i.e.} be smooth in the usual sense. In particular $V$ is smooth if and only $V = \cup_i V^{G_i}$. We deduce easily from the finite case that:

\begin{theo} \label{thm:pro-etale-descent} The category of $A$-modules is equivalent to the category of $B$-modules with semi-linear smooth action of $G$. Given a smooth $B'[G]$-module $V$, the natural map $V^G \otimes_A B \to V$ is an isomorphism. \end{theo}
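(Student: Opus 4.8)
The plan is to bootstrap from the finite (étale) case, which has already been established, to the proétale case by passing to the limit, being careful about the smoothness condition. First I would fix a presentation $B = \varinjlim_i B_i$ with each $A \to B_i$ finite étale, and correspondingly $G = \varprojlim_i G_i$ with $G_i = \textup{Aut}_A(B_i)$; one should check that the transition maps $G \to G_i$ are surjective (or at least that $G$ surjects onto a cofinal system of the $G_i$) so that the fixed-point functors behave well, and that $B_i = B^{H_i}$ where $H_i = \ker(G \to G_i)$ is the corresponding open normal subgroup, which follows by applying the finite case to $A \to B_i$.

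\medskip

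Next I would set up the two functors. In one direction, given an $A$-module $M$, the module $M \otimes_A B = \varinjlim_i (M \otimes_A B_i)$ carries the evident semi-linear $G$-action, and it is smooth because every element of $M \otimes_A B$ lies in some $M \otimes_A B_i$, which is fixed by $H_i$; so $M \mapsto M \otimes_A B$ lands in the category of smooth $B'[G]$-modules. In the other direction, given a smooth $B'[G]$-module $V$, form $V^G$; since the action is smooth, $V = \bigcup_i V^{H_i}$, and each $V^{H_i}$ is a module over the twisted group algebra $B_i'[G_i]$, to which the finite-case theorem applies, giving $(V^{H_i})^{G_i} \otimes_A B_i \xrightarrow{\sim} V^{H_i}$. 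The key identification is $(V^{H_i})^{G_i} = V^G$ (since $G$ is generated topologically by $H_i$ and lifts of $G_i$), so taking the colimit over $i$ yields $V^G \otimes_A B = \varinjlim_i (V^G \otimes_A B_i) \xrightarrow{\sim} \varinjlim_i V^{H_i} = V$, which is precisely the natural map asserted to be an isomorphism. This also shows $V \mapsto V^G$ is essentially surjective.

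\medskip

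Finally I would verify that the two functors are mutually inverse: starting from $M$, we get $(M \otimes_A B)^G = \varinjlim_i (M \otimes_A B_i)^{H_i}$, and $(M \otimes_A B_i)^{H_i} = (M \otimes_A B_i)$ because $H_i$ acts trivially there, while $(M \otimes_A B_i)^{G_i} = M$ by the finite case — wait, more carefully, $(M \otimes_A B)^G = ((M \otimes_A B)^{H_i})^{G_i}$ and $(M\otimes_A B)^{H_i}$ contains $M \otimes_A B_i$; one checks equality using smoothness and then applies the finite case to recover $M$. Conversely $V^G \otimes_A B \cong V$ was just established. Fullness and faithfulness on morphisms reduce to the finite level: any $G$-equivariant $B$-linear map between smooth modules restricts to $H_i$-fixed parts for suitable $i$ and is detected there. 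The main obstacle, and the only point requiring genuine care, is the interchange of the inverse limit defining $G$ with the various (co)limits and fixed-point functors — specifically making sure that "smooth" is exactly the condition that makes $V = \bigcup_i V^{H_i}$ and that $V^G = (V^{H_i})^{G_i}$ holds on the nose — everything else is a formal consequence of the already-proven finite case applied levelwise and then passed to the colimit.
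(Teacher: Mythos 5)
Your proof is correct and follows the same approach the paper sketches: reduce to the already-established finite étale case levelwise, using smoothness to write $V = \bigcup_i V^{H_i}$, identify $(V^{H_i})^{G_i} = V^G$, and pass to the filtered colimit. The paper only states this as "we deduce easily from the finite case," and your write-up supplies precisely that deduction, correctly flagging the points (surjectivity of $G \to G_i$, $B_i = B^{H_i}$, compatibility of the levelwise isomorphisms) that need checking.
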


\section{Preliminaries on the Weil representation}

Let $F$ be a field of characteristic not $2$, that is either a finite field of cardinality $q$ or a non-archimedean local field of residual cardinality $q$. We write $q=p^f$ where $p$ is a prime number. Let $(W, \langle \ , \ \rangle)$ be a symplectic vector space of finite dimension $n=2m$ over $F$. Let $H$ be the Heisenberg group \textit{i.e.} $H = W \times F$ as a set with group law
$$(w,t) \cdot (w',t') = (w+w',t+t'+\frac{1}{2} \langle w,w' \rangle)$$
for $w, w' \in W$ and $t, t' \in F$. The centre of $H$ is identified with $F$ via $t \mapsto (0,t)$.

Let $R_0$ be $\mathbb{Q}$ or $\mathbb{F}_\ell$ with $\ell \neq p$ and let $\mathcal{K}$ be the field extension $R_0[\zeta_p]$ if $\textup{char}(F) > 0$ and $R_0[\zeta_{p^\infty}]$ if $\textup{char}(F)=0$. The Galois group $G = \textup{Gal}(\mathcal{K}/R_0)$ is abelian. It is a subgroup of $(\mathbb{Z}/p \mathbb{Z})^\times$ if $\textup{char}(F) > 0$ and an open subgroup of $\mathbb{Z}_p^\times$ otherwise.

Let $\psi : F \to \mathcal{K}^\times$ be a non-trivial smooth character. If $\sigma \in G$, we define a non-trivial character ${}^\sigma \psi : F \to \mathcal{K}^\times$ via $\psi^\sigma(t) = \sigma( \psi(t))$. If $\gamma \in F^\times$, we define a non-trivial character $\psi^\gamma : F \to \mathcal{K}^\times$ via $\psi^\gamma(t) = \psi(\gamma t)$. Since the action of $F^\times$ on non-trivial characters is simply transitive, we deduce that for each $\{ (\sigma,\gamma) \in G \times F^\times \ | \ \psi^\sigma = \psi^\gamma \}$ is the graph of an homeomorphism which identifies $G$ with a compact subgroup of $F^\times$. 

Let $W = X \oplus Y$ be a complete polarisation. In particular $X$ and $Y$ are Lagrangians. It also defines a duality between $Y$ and $X$ by identifying $Y$ and $X^*$ via $y \mapsto \langle y , - \rangle$.

\subsection{} The Schr\"odinger model of the Heisenberg representation associated to $\psi$ and $X$ is the representation $(\rho_{\psi,X},S_{\psi,X}) \in \textup{Rep}_\mathcal{K}(H)$ with central character $\psi$ defined by
$$S_{\psi,X} = \textup{ind}_{X_H}^H(\psi_X)$$
where $X_H = X \times F$ is a subgroup of $H$ and $\psi_X : (x,t) \in X_H \mapsto \psi(t) \in \mathcal{K}^\times$ is a character. Thanks to our complete polarisation, we have a canonical isomorphism between $S_{\psi,X}$ and $C_c^\infty(Y,\mathcal{K})$ via $f \mapsto f|_Y$. This allows us to consider $S_{\psi,X}$ on the space $C_c^\infty(Y,\mathcal{K})$, which is a vector space independent of $\psi$, though the action of $H$ on this vector space will depend on $\psi$. To be more explicit
$$\rho_{\psi,X}(x+y,t) \cdot f (y') = \psi(t) \psi(\langle y' , x \rangle + \frac{1}{2} \langle y , x \rangle) f(y'+y)$$
for $x \in X$, $y \in Y$, $t \in F$ and $f \in C_c^\infty(Y,\mathcal{K})$.

For $\sigma \in G$, we define $\sigma \cdot f \in C_c^\infty(Y,\mathcal{K})$ via $(\sigma \cdot f) (y) = \sigma(f(y))$. The morphism $f \mapsto \sigma \cdot f$ defines a semilinear isomorphism $(\rho_{\psi,X},S_{\psi,X}) \simeq (\rho_{\psi^\sigma,X},S_{\psi^\sigma,X})$ of representations \textit{i.e.}
$$\sigma \cdot (\rho_{\psi,X}(h) f) = \rho_{\psi^\sigma,X}(h) (\sigma \cdot f)$$
for all $h \in H$ and all $f \in C_c^\infty(Y,\mathcal{K})$. In particular ${}^\sigma \rho_{\psi,X} \simeq \rho_{\psi^\sigma,X}$.

\subsection{} Thanks to the Stone-von Neumann Theorem \cite{mvw,trias_modular_weil}, there exists a unique $\rho_\psi \in \textup{Irr}_\mathcal{K}(H)$ with central character $\psi$. Moreover $\rho_\psi$ is admissible and absolutely irreducible. Let $\textup{tr}_{\rho_\psi} : \mathcal{H}_\mathcal{K}(H) \to \mathcal{K}$ be the trace-character of $\rho_\psi$ defined in Section \ref{sec:trace_of_admissible_reps} and recall that $R(\rho_\psi)$ is its rationality/character field and $m(\rho_\psi)$ its Schur index.

\begin{prop}\label{prop:heisenberg-rep-realisation-rationality-character} We have $R(\rho_\psi) = \mathcal{K}$ and $m(\rho_\psi) = 1$. \end{prop}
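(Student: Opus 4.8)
The plan is to prove the two inclusions $R(\rho_\psi)\subseteq\mathcal{K}$ and $\mathcal{K}\subseteq R(\rho_\psi)$ separately, and then to read off the Schur index.

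For $R(\rho_\psi)\subseteq\mathcal{K}$ (and, granting the reverse inclusion, $m(\rho_\psi)=1$): the Schr\"odinger model $S_{\psi,X}=\textup{ind}_{X_H}^H(\psi_X)$ is visibly realised over $\mathcal{K}$, since the character $\psi_X$ takes values in $\mathcal{K}^\times$. Hence $\mathcal{K}$ is a field of realisation of $\rho_\psi$, so $R_c(\rho_\psi)\subseteq\mathcal{K}$; and since $\rho_\psi$ is absolutely irreducible and admissible, Proposition \ref{prop:rationality-field-equal-character-field} gives $R(\rho_\psi)=R_c(\rho_\psi)\subseteq\mathcal{K}$. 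Once the reverse inclusion is established, $\mathcal{K}$ is a field of realisation of degree $1$ over $R(\rho_\psi)=\mathcal{K}$, whence $m(\rho_\psi)=1$.

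For $\mathcal{K}\subseteq R(\rho_\psi)$, I would show that $H(\rho_\psi)=\{\sigma\in G\ |\ {}^\sigma\rho_\psi\simeq\rho_\psi\}$ is trivial, where $G=\textup{Gal}(\mathcal{K}/R_0)$. By the semilinear isomorphism recalled above one has ${}^\sigma\rho_\psi\simeq\rho_{\psi^\sigma}$, and $\rho_{\psi^\sigma}$ is the Heisenberg representation with central character $\psi^\sigma=\sigma\circ\psi$. Since, by the Stone--von Neumann theorem, an irreducible representation of $H$ is determined up to isomorphism by its central character, ${}^\sigma\rho_\psi\simeq\rho_\psi$ if and only if $\psi^\sigma=\psi$, i.e. if and only if $\sigma$ fixes every value of $\psi$. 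It thus suffices to check that the values of $\psi$ generate $\mathcal{K}$ over $R_0$. If $\textup{char}(F)>0$, then $F$ is an $\mathbb{F}_p$-vector space, so $\psi(F)$ is a subgroup of $\mu_p$, and non-triviality forces $\psi(F)=\mu_p$, which generates $\mathcal{K}=R_0[\zeta_p]$. If $\textup{char}(F)=0$, then $p$ is invertible in $F$, so $p^kF=F$ for every $k$; hence $\psi(F)^{p^k}=\psi(p^kF)=\psi(F)$, and if $\psi(F)$ were contained in $\mu_{p^k}$ this would force $\psi(F)=\{1\}$, contradicting non-triviality. Therefore $\psi(F)$ contains a primitive $p^k$-th root of unity for every $k$ and so generates $\bigcup_k R_0[\zeta_{p^k}]=\mathcal{K}$. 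In all cases $H(\rho_\psi)$ is trivial, so $R(\rho_\psi)=\mathcal{K}^{H(\rho_\psi)}=\mathcal{K}$.

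I do not anticipate a real obstacle here. The one point to handle carefully is the invocation of Stone--von Neumann uniqueness at the appropriate level of generality (one applies it over $\mathcal{K}$, or equivalently over $\overline{R_0}$ using that $\rho_\psi$ is absolutely irreducible, noting that ${}^\sigma$ sends a representation with central character $\chi$ to one with central character $\sigma\circ\chi$), together with the elementary verification, carried out above, that the image of a non-trivial smooth additive character is exactly the group of $p$-power (resp. $p$-th) roots of unity of $\mathcal{K}$.
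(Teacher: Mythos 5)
Your proof is correct and follows essentially the same route as the paper: use Stone--von Neumann to get ${}^\sigma\rho_\psi\simeq\rho_{\psi^\sigma}$, conclude $H(\rho_\psi)=\{\mathrm{id}\}$ hence $R(\rho_\psi)=\mathcal{K}$, and then note that $\rho_\psi$ is already realised over $\mathcal{K}=R(\rho_\psi)$ to get $m(\rho_\psi)=1$. The only difference is that you spell out why $\psi^\sigma=\psi$ forces $\sigma=\mathrm{id}$ (i.e.\ that the values of $\psi$ generate $\mathcal{K}$), a point the paper has already baked into its setup via the identification of $G$ with a compact subgroup of $F^\times$ acting freely on non-trivial characters; your explicit argument for $R(\rho_\psi)\subseteq\mathcal{K}$ is also unnecessary, since it is definitional for a $\mathcal{K}$-representation, but harmless.
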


\begin{proof} Let $\sigma \in \textup{Gal}(\mathcal{K}/R_0)$. Since the representation ${}^\sigma \rho_\psi$ is irreducible and has central character $\psi^\sigma$, it is isomorphic to $\rho_{\psi^\sigma}$ by Stone-von Neumann Theorem. Then ${}^\sigma \rho_\psi \simeq \rho_\psi$ only if $\psi^\sigma = \psi$ \textit{i.e.} $\sigma = \textup{id}$. So $H(\rho_\psi) = \{ \textup{id} \}$ and $R(\rho_\psi)= \mathcal{K}$. Since $\rho_\psi$ is already realised over $R(\rho_\psi)$, we obtain $m(\rho_\psi) = 1$. \end{proof}

\subsection{} Let $\textup{Mp}(W) = \textup{Sp}(W) \times_{c_X} \{ \pm 1 \}$ be the metaplectic group and let $(\omega_{\psi,X},S_{\psi,X}) \in \textup{Rep}_{\mathcal{K}}(\textup{Mp}(W))$ be the Weil representation associated to $\psi$ and $X$ \cite{trias_modular_weil}. As earlier, we realise the Weil representations $\omega_{\psi,X}$ for each character $\psi$ in a uniform way via the isomorphism $S_{\psi,X} \simeq C_c^\infty(Y,\mathcal{K})$ of vector spaces.

From now on, we will write $(\omega_{\psi,X},C_c^\infty(Y,\mathcal{K}))$ for the Weil representation associated to $\psi$ and $X$. Let $P(X)$ be the stabiliser of $X$ in $\textup{Sp}(W)$, also called the Siegel parabolic, and denote by $P(X) = M(X) N(X)$ its Levi decomposition with respect to the complete polarisation $W = X \oplus Y$. We have an antiisomorphism $a \in \textup{GL}_F(X) \mapsto a^* \in \textup{GL}_F(Y)$ and an isomorphism $b \in \textup{Hom}_F(Y,X) \mapsto b^* \in \textup{Hom}_F(Y,X)$ via $Y \cong X^*$ and $X^* \cong Y$. We let $\textup{Hom}_F^{*,-}(Y,X) = \{ b \in \textup{Hom}_F(Y,X) \ | \ b^* = - b\}$ be the antisymmetric morphisms.

We can describe $\omega_{\psi,X}$ on generators of $\textup{Sp}(W)$ via
\begin{itemize}
\item if $m	 \in M(X)$ with $m|_X = a \in \textup{GL}_F(X)$, then $$(\omega_{\psi,X}(m,1) \cdot f) (y) = \Omega_{1,\textup{det}(a)}^\psi f(a^* y).$$
\item if $n \in N(X)$ with $n-\textup{id}_W = b \in \textup{Hom}_F^{*,-}(Y,X)$, then $$(\omega_{\psi,X}(n,1) \cdot f)(y) = \psi(\frac{1}{2} \langle b y , y \rangle ) f(y).$$
\item if $w \in \textup{Sp}(W)$ with $w(X) = Y$ and $w|_X = c \in \textup{Iso}_F(X,Y)$, then $$(\omega_{\psi,X}(w,1) \cdot f)(y) = \int_X \psi(\langle x , y \rangle) f(c x) d\mu_w^\psi(x)$$ where $\mu_w^\psi = \Omega_\mu(\psi \circ Q_w)^{-1} \mu$ and $Q_w(x) = \frac{1}{2} \langle w x , x \rangle = \frac{1}{2} \langle c x , x \rangle$ (see \cite{trias_modular_weil}). 
\end{itemize}

\subsection{} \label{sec:weil_rep_dependance_on_psi} Let $\sigma \in G = \textup{Gal}(\mathcal{K}/R_0)$ and consider the morphism $f \mapsto \sigma \cdot f$ as before. The non-normalised Weil factor sastisfies $\sigma(\Omega_{\mu}(\psi \circ Q)) = \Omega_{\mu^\sigma}(\psi^\sigma \circ Q)$. In particular $\sigma(\Omega_{1,\gamma}^\psi) = \Omega_{1,\gamma}^{\psi^\sigma}$ for all $\gamma \in F^\times$. Therefore, for all $f \in C_c^\infty(Y,\mathcal{K})$ and $(g,\lambda) \in \textup{Mp}(W)$, we have
$$\sigma \cdot (\omega_{\psi,X}(g,\lambda) f) = \omega_{\psi^\sigma,X}(g,\lambda) (\sigma \cdot f)$$
because the metaplectic cocycle $c_X$ takes values in $\{ \pm 1 \}$. In particular ${}^\sigma \omega_{\psi,X} \simeq \omega_{\psi^\sigma,X}$.

\subsection{} For $\gamma \in F^\times$, it is well-known that $\omega_{\psi^\gamma,X} \simeq \omega_{\psi,X}$ if and only $\gamma \in F^{\times 2}$. The proof is the same as in the complex case, where the if part comes from the action of $\textup{GSp}(W)$ by conjugation \cite[9.1.2]{theta_book} and the only if part from a computation of twisted Jacquet functors \cite[9.4.3]{theta_book}. We need to be more precise than that to perform our Galois descent. Let $( \ , \ )_F$ be the quadratic Hilbert symbol, which is trivial if $F$ is a finite field.

\begin{prop} \label{prop:formulas-weil-rep-character-gamma} For $\gamma \in F^\times$, we have the following identities: 
\begin{eqnarray}
\omega_{\psi^\gamma,X}(m(a),1) = (\gamma,\textup{det}(a))_F \omega_{\psi,X}(m(a),1) \\
\omega_{\psi^{\gamma},X}(n(b),1) = \omega_{\psi,X}(n(\gamma b),1) \\
\omega_{\psi^\gamma,X}(w(c),1) = \Omega_{w,\gamma} \omega_{\psi,X}(w(\gamma c),1)
\end{eqnarray} where $\displaystyle \Omega_{w,\gamma} = \frac{\Omega_\mu(\psi \circ Q_w)}{\Omega_\mu(\psi^\gamma \circ Q_w)} |\gamma|_F^m$. \end{prop}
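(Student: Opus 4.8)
All the Weil representations $\omega_{\psi',X}$ (for the various non-trivial characters $\psi'$) are realised on the \emph{same} vector space $C_c^\infty(Y,\mathcal{K})$, the $H$- and $\textup{Sp}(W)$-actions being the only thing that changes. Hence, in each of the three identities, both sides are endomorphisms of this fixed space, and it suffices to evaluate them on an arbitrary $f \in C_c^\infty(Y,\mathcal{K})$ at an arbitrary point $y \in Y$, using the explicit description of $\omega_{\psi,X}$ on the generators $m(a)$, $n(b)$, $w(c)$ recalled above, now applied to the character $\psi^\gamma$ and simplified.

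\textbf{The unipotent and Levi cases.} For $n(b)$ with $b \in \textup{Hom}_F^{*,-}(Y,X)$, one has $\psi^\gamma(\tfrac12\langle by,y\rangle) = \psi(\tfrac12\langle \gamma b\, y,y\rangle)$, and $\gamma b$ again lies in $\textup{Hom}_F^{*,-}(Y,X)$ since $(\gamma b)^* = \gamma b^* = -\gamma b$; thus $n(\gamma b)$ is a well-defined element of $N(X)$ and identity (2) is immediate. For $m(a)$, the formula gives $(\omega_{\psi^\gamma,X}(m(a),1)f)(y) = \Omega^{\psi^\gamma}_{1,\textup{det}(a)}\, f(a^*y)$, so identity (1) reduces to the scalar relation $\Omega^{\psi^\gamma}_{1,c} = (\gamma,c)_F\,\Omega^{\psi}_{1,c}$ with $c = \textup{det}(a)$. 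This is the standard transformation law of the normalised Weil index under $\psi \mapsto \psi^\gamma$: I would quote it from \cite{trias_modular_weil}, or derive it from the tautology $\Omega_\mu(\psi^\gamma\circ Q) = \Omega_\mu(\psi\circ\gamma Q)$ together with the cocycle relation $\Omega^\psi_{1,c}\Omega^\psi_{1,c'} = (c,c')_F\,\Omega^\psi_{1,cc'}$ and $\Omega^\psi_{1,1}=1$, being careful that these inputs are the $\ell$-modular versions established in \cite{trias_modular_weil} rather than the classical archimedean ones.

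\textbf{The Weyl-element case.} This is where the computation is genuine. Starting from
$$(\omega_{\psi^\gamma,X}(w(c),1)f)(y) = \Omega_\mu(\psi^\gamma\circ Q_w)^{-1}\int_X \psi^\gamma(\langle x,y\rangle)\,f(cx)\,d\mu(x), \qquad Q_w(x)=\tfrac12\langle cx,x\rangle,$$
I would write $\psi^\gamma(\langle x,y\rangle) = \psi(\langle \gamma x,y\rangle)$, perform the linear change of variables $x\mapsto \gamma x$ on $X$ (contributing a Jacobian $|\gamma|_F^{m}$ with $m=\textup{dim}_F X$), and reorganise the integrand so that it becomes the partial Fourier transform attached to $w(\gamma c)$. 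Simultaneously, the identity $\psi^\gamma\circ Q_w = \psi\circ \gamma Q_w = \psi\circ Q_{w(\gamma c)}$, valid because $Q_{w(\gamma c)}(x)=\gamma Q_w(x)$, converts the normalising Weil factor into the one defining the measure $\mu^\psi_{w(\gamma c)}$. Collecting the leftover scalar $\Omega_\mu(\psi\circ Q_w)\,\Omega_\mu(\psi^\gamma\circ Q_w)^{-1}\,|\gamma|_F^{m} = \Omega_{w,\gamma}$ yields exactly identity (3).

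\textbf{Main obstacle.} I expect the bookkeeping in the Weyl-element case to be the delicate point: there are three independent $\gamma$-dependencies — the additive character in the Fourier kernel, the quadratic form buried inside the Weil factor, and the normalisation of the Haar measure $\mu^\psi_w$ — and they must be disentangled and recombined correctly; in particular one must be careful which of $\gamma$ or $\gamma^{-1}$ appears in $w(\gamma c)$ under the chosen parametrisation of Weyl elements by $\textup{Iso}_F(X,Y)$, and the change of variables must be performed in a way compatible with that parametrisation. A secondary (more bookkeeping than conceptual) obstacle is to make sure that the Weil-index transformation law used in the Levi case, and the multiplicativity properties of $\Omega_\mu(\psi\circ Q)$, hold in the full generality in which the proposition is stated, namely over an arbitrary coefficient field of characteristic different from $p$; for this one relies throughout on \cite{trias_modular_weil}.
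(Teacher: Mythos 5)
Your proposal takes exactly the same route as the paper's proof: identity (1) is reduced to the transformation law $\Omega^{\psi^\gamma}_{1,\det(a)} = (\gamma,\det(a))_F\,\Omega^{\psi}_{1,\det(a)}$ of the non-normalised Weil factor cited from \cite{trias_modular_weil}, identity (2) is the one-line rewriting $\psi^\gamma(\tfrac12\langle by,y\rangle) = \psi(\tfrac12\langle (\gamma b) y,y\rangle)$, and identity (3) is obtained by the change of variables $x \mapsto \gamma x$ in the Fourier-type integral, matching the paper's terse ``simple change of variables.'' Your warning about tracking $\gamma$ versus $\gamma^{-1}$ in the Weyl-element case is the only genuinely delicate point (which the paper's proof elides), but the overall argument coincides.
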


\begin{proof} We have $\Omega_{1,\textup{det}(a)}^{\psi^\gamma} = (\gamma,\textup{det}(a))_F \Omega_{1,\textup{det}(a)}^\psi$ by the properties of the non-normalised Weil factor according to \cite[Sec 4]{trias_modular_weil}, so this proves the first formula. The second one is very straightforward. The last one comes from a simple change of variables. \end{proof}

\begin{cor} \label{cor:identification_omega_psi_and_omega_psi_gamma2} Let $\gamma \in \mathcal{O}_F^\times$ and write $m_\gamma$ for $m(\gamma \textup{id}_X)$. We have $$ \omega_{\psi^{\gamma^2},X}(g,\lambda) = \omega_{\psi,X}^{m_\gamma}(g,\lambda).$$ \end{cor}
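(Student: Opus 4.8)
The plan is to prove the asserted equality of operators by testing it on a generating set of $\textup{Mp}(W)$ and computing directly in the model $C_c^\infty(Y,\mathcal{K})$. First I would unwind the notation: taking $a=\gamma\,\textup{id}_X$ in the description of $\omega_{\psi,X}$ on $M(X)$, the operator $\omega_{\psi,X}(m_\gamma,1)$ is the explicit map $T_\gamma\colon f\mapsto\big[\,y\mapsto \Omega_{1,\det(\gamma\,\textup{id}_X)}^\psi\, f(\gamma y)\,\big]$, and $\omega_{\psi,X}^{m_\gamma}(g,\lambda)=T_\gamma\,\omega_{\psi,X}(g,\lambda)\,T_\gamma^{-1}$. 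Both $\omega_{\psi^{\gamma^2},X}$ and $(g,\lambda)\mapsto T_\gamma\,\omega_{\psi,X}(g,\lambda)\,T_\gamma^{-1}$ are genuine representations of $\textup{Mp}(W)$ on the same space, so it suffices to check that they agree on the lifts $(m(a),1)$, $(n(b),1)$, $(w(c),1)$ of the standard generators of $\textup{Sp}(W)$, the central element $(\textup{id},-1)$ acting by $-\textup{id}$ on each side.

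Next I would record the simplifications coming from $\gamma\in\mathcal{O}_F^\times$. Then $|\gamma|_F=1$, the quadratic Hilbert symbol $(\gamma^2,\cdot)_F$ is trivial (it is a square in any case), and the change of variables $x\mapsto\gamma x$ preserves any Haar measure on $X$; combined with $Q_w(\gamma x)=\gamma^2 Q_w(x)$ this gives $\Omega_\mu(\psi^{\gamma^2}\circ Q_w)=\Omega_\mu(\psi\circ Q_w)$, hence $\Omega_{w,\gamma^2}=1$ in the notation of Proposition~\ref{prop:formulas-weil-rep-character-gamma}. That proposition, applied with $\gamma^2$ in place of $\gamma$, then collapses to the three clean identities $\omega_{\psi^{\gamma^2},X}(m(a),1)=\omega_{\psi,X}(m(a),1)$, $\omega_{\psi^{\gamma^2},X}(n(b),1)=\omega_{\psi,X}(n(\gamma^2 b),1)$ and $\omega_{\psi^{\gamma^2},X}(w(c),1)=\omega_{\psi,X}(w(\gamma^2 c),1)$, which are exactly the right-hand sides I must match.

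The heart of the argument is then the generator-by-generator computation of $T_\gamma\,\omega_{\psi,X}(g,1)\,T_\gamma^{-1}$. For $g=m(a)$ the normalised Weil factors $\Omega_{1,\det(\gamma\,\textup{id}_X)}^\psi$ cancel and $\gamma^{-1}a^{*}\gamma=a^{*}$ since $\gamma$ is a scalar, so the conjugate is $\omega_{\psi,X}(m(a),1)$ itself. For $g=n(b)$, conjugation sends the multiplication operator $f(y)\mapsto\psi(\tfrac12\langle by,y\rangle)f(y)$ to $f(y)\mapsto\psi(\tfrac{\gamma^2}{2}\langle by,y\rangle)f(y)=\omega_{\psi,X}(n(\gamma^2 b),1)f(y)$. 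For $g=w(c)$, conjugation produces $f(y)\mapsto\int_X\psi(\gamma\langle x,y\rangle)f(\gamma^{-1}cx)\,d\mu_w^\psi(x)$, and the substitution $x\mapsto\gamma^{-1}x$ — measure preserving because $|\gamma|_F=1$, and leaving $\mu_w^\psi$ unchanged because $\Omega_\mu(\psi^{\gamma^{-2}}\circ Q_w)=\Omega_\mu(\psi\circ Q_w)$ — rewrites this as the integral operator attached to $w(\gamma^2 c)$. Matching the three outputs with the collapsed form of Proposition~\ref{prop:formulas-weil-rep-character-gamma} completes the proof.

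I expect the only delicate point to be the bookkeeping of the scalar $\Omega_{1,\det(\gamma\,\textup{id}_X)}^\psi$ through the conjugation (it has to cancel exactly) and the change-of-variables juggling in the $w(c)$ case, together with the verification $\Omega_{w,\gamma^2}=1$, which rests on the behaviour of the non-normalised Weil factor under unit rescalings of the character recorded in \cite[Sec. 4]{trias_modular_weil}. Everything else is formal: the reduction to generators, the triviality of $(\gamma^2,\cdot)_F$, and the invariance of $d\mu$ under $x\mapsto\gamma x$. It is worth noting that the computation only ever uses that $\gamma$, and hence $\gamma^2$, is a unit, which is precisely the class of twists along which the Galois descent of the next sections is performed, so nothing is lost by this restriction.
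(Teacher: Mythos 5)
Your proposal is correct and follows essentially the same route as the paper. Both proofs reduce to the generators $m(a)$, $n(b)$, $w(c)$ of $\textup{Sp}(W)$, apply Proposition~\ref{prop:formulas-weil-rep-character-gamma} with $\gamma^2$ in place of $\gamma$, and then collapse the extra factors using the triviality of $(\gamma^2,\cdot)_F$ on squares and the unit-invariance of the non-normalised Weil factor $\Omega_\mu(\psi^{\gamma^2}\circ Q_w)=|\gamma|_F^m\,\Omega_\mu(\psi\circ Q_w)$, before concluding by generation and genuineness. The only cosmetic difference is that you compute the operator conjugation $T_\gamma\,\omega_{\psi,X}(g,1)\,T_\gamma^{-1}$ directly in the Schr\"odinger model, whereas the paper records instead the metaplectic-group identity $(m_\gamma,\mu)(g,\lambda)(m_\gamma,\mu)^{-1}=(m_\gamma g m_\gamma^{-1},\lambda)$ and matches $\omega_{\psi,X}(m_\gamma g m_\gamma^{-1},1)$ against the Proposition~\ref{prop:formulas-weil-rep-character-gamma} output; these are equivalent since $\omega_{\psi,X}$ is a homomorphism.
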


\begin{proof} In the metaplectic group, we have $(m_\gamma,\mu) (g,\lambda) (m_\gamma,\mu)^{-1} = (m_\gamma g m_\gamma^{-1},\lambda)$. Recall that the quadratic Hilbert symbol $( \ , \ )_F$ is trivial on squares and that the non-normalised Weil factor satisfies $\Omega_\mu(\psi^{\gamma^2} \circ Q_w) = \Omega_\mu(\psi \circ Q_w \circ \gamma) = |\gamma|_F^m \Omega_\mu(\psi \circ Q_w)$ when $\gamma \in F^\times$. Therefore, when $\gamma \in \mathcal{O}_F^\times$, we obtain from Proposition \ref{prop:formulas-weil-rep-character-gamma} three equalities
\begin{eqnarray}
\omega_{\psi^{\gamma^2},X}(m,1) = \omega_{\psi,X}(m,1) = \omega_{\psi,X}(m_\gamma m m_\gamma^{-1},1) \\
\omega_{\psi^{\gamma^2},X}(n,1) = \omega_{\psi,X}(m_\gamma n m_\gamma^{-1},1) \\
\omega_{\psi^{\gamma^2},X}(w,1) = \omega_{\psi,X}(m_\gamma w m_\gamma^{-1},1)
\end{eqnarray}
Since the symplectic group $\textup{Sp}(W)$ is generated by elements of the form $m, n, w$, the genuine representations $\omega_{\psi^{\gamma^2},X}$ and $\omega_{\psi,X}^{m_\gamma}$ of $\textup{Mp}(W)$ must be equal. \end{proof}

\section{Descent when $p \neq 2$} \label{sec:descent-when-p-not-2}

\subsection{} Write $G = G_2 \times G_{2'}$ where $G_2$ is a finite $2$-group and $G_{2'}$ has pro-order prime-to-$2$. In particular the square morphism $\sigma \mapsto \sigma^2$ induces a group automorphism of $G_{2'}$. Let $\mathcal{L} = \mathcal{K}^{G_{2'}}$ be the fixed field of $G_{2'}$, which is a finite extension of $R_0$. We can always operate a Galois descent to this subfield:

\begin{theo} \label{thm:weil-representation-descent-p-not-2} The Weil representation can be realised over $\mathcal{L}$. \end{theo}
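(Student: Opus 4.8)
The plan is to realise the Weil representation over $\mathcal{L}$ by invoking the proétale (or finite) Galois descent theorem of Theorem \ref{thm:pro-etale-descent}, applied to the extension $\mathcal{L} \to \mathcal{K}$ with Galois group $G_{2'} = \textup{Gal}(\mathcal{K}/\mathcal{L})$. What one must produce is a semi-linear smooth action of $G_{2'}$ on the $\mathcal{K}$-vector space $C_c^\infty(Y,\mathcal{K})$ commuting with the metaplectic action $\omega_{\psi,X}$; the fixed points then form an $\mathcal{L}$-model whose scalar extension recovers $\omega_{\psi,X}$. The subtlety, and the reason the descent is only to $\mathcal{L}$ rather than all the way to $R_0$, is that the obvious semi-linear action $f \mapsto \sigma\cdot f$ does \emph{not} commute with $\omega_{\psi,X}$: by Section \ref{sec:weil_rep_dependance_on_psi} it intertwines $\omega_{\psi,X}$ with $\omega_{\psi^\sigma,X}$, not with $\omega_{\psi,X}$ itself. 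So one needs to correct $f \mapsto \sigma \cdot f$ by an intertwiner $\omega_{\psi^\sigma,X} \simeq \omega_{\psi,X}$, which exists precisely when $\psi^\sigma$ and $\psi$ differ by a square in $F^\times$.

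First I would pin down the arithmetic of the characters. By the discussion after the definition of $\mathcal{K}$, the graph $\{(\sigma,\gamma)\in G\times F^\times : \psi^\sigma = \psi^\gamma\}$ identifies $G$ with a compact subgroup of $F^\times$; write $\gamma_\sigma \in F^\times$ for the element attached to $\sigma$, so $\psi^\sigma = \psi^{\gamma_\sigma}$. The key point is that for $\sigma \in G_{2'}$, the order of $\sigma$ is odd, hence $\gamma_\sigma$ has odd order in the corresponding compact subgroup of $F^\times$; since $F^\times/F^{\times 2}$ is a $2$-group (it is $\{\pm 1\}$ for $F$ finite, and $(\mathbb{Z}/2)^k$ for $F$ non-archimedean local with $p$ odd, $k$ small), any element of odd order lies in $F^{\times 2}$. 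Thus $\gamma_\sigma = \delta_\sigma^2$ for some $\delta_\sigma$, which one may even take in $\mathcal{O}_F^\times$ after adjusting by squares, so $\psi^\sigma = \psi^{\delta_\sigma^2}$ with $\delta_\sigma \in \mathcal{O}_F^\times$.

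Next I would use Corollary \ref{cor:identification_omega_psi_and_omega_psi_gamma2}: for $\delta \in \mathcal{O}_F^\times$ we have $\omega_{\psi^{\delta^2},X}(g,\lambda) = \omega_{\psi,X}^{m_\delta}(g,\lambda)$, i.e. conjugation by $m_\delta = m(\delta\,\textup{id}_X)$ carries $\omega_{\psi,X}$ to $\omega_{\psi^{\delta^2},X}$. Combining this with the semi-linear identification $\sigma\cdot(\omega_{\psi,X}(g,\lambda)f) = \omega_{\psi^\sigma,X}(g,\lambda)(\sigma\cdot f)$ from Section \ref{sec:weil_rep_dependance_on_psi}, the corrected operator $\tilde\sigma(f) = \omega_{\psi,X}(m_{\delta_\sigma},1)^{-1}(\sigma\cdot f)$ (concretely $(\tilde\sigma f)(y) = (\Omega_{1,\det\delta_\sigma}^\psi)^{-1}\,\sigma(f(\delta_\sigma^{*-1}y))$ up to the normalisation in the $M(X)$-formula) is $\sigma$-semi-linear and now \emph{commutes} with $\omega_{\psi,X}(g,\lambda)$ for all $(g,\lambda) \in \textup{Mp}(W)$. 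One then checks that $\sigma \mapsto \tilde\sigma$ is a group homomorphism into $\sigma$-semi-linear automorphisms — this requires that the $\delta_\sigma$ can be chosen compatibly, i.e. that $\sigma \mapsto \gamma_\sigma$ followed by a square-root is multiplicative modulo the scalars that disappear under the conjugation; since the cocycle obstruction lives in $H^2(G_{2'}, \mathcal{K}^\times)$ and $G_{2'}$ has odd pro-order while the relevant roots of unity form a $2$-group (powers of $-1$ coming from Hilbert symbols and from $\{\pm1\}$-valued metaplectic cocycle), this $2$-torsion obstruction vanishes over $G_{2'}$ — and that the action is smooth, which is automatic since $\mathcal{K}/\mathcal{L}$ is proétale and every vector of $C_c^\infty(Y,\mathcal{K})$ has values in a finite subextension.

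Finally, applying Theorem \ref{thm:pro-etale-descent} to this smooth semi-linear $G_{2'}$-action on the $\textup{Mp}(W)$-representation $C_c^\infty(Y,\mathcal{K})$ yields $\big(C_c^\infty(Y,\mathcal{K})\big)^{G_{2'}}$, an $\mathcal{L}[\textup{Mp}(W)]$-module whose scalar extension to $\mathcal{K}$ is $\omega_{\psi,X}$; this is the desired model over $\mathcal{L}$. The main obstacle I anticipate is not the descent machinery but the \emph{coherence of the correction cocycle}: making the family $(m_{\delta_\sigma})_{\sigma \in G_{2'}}$ genuinely multiplicative, which is where one must exploit that $G_{2'}$ has odd pro-order to kill the $2$-power-root-of-unity discrepancies (Hilbert symbols, metaplectic signs) that obstruct choosing consistent square roots $\delta_\sigma$ of $\gamma_\sigma$; equivalently, verifying that the naive semi-linear action already descends to $\mathcal{L}$ because over $G_{2'}$ the twist $\psi \mapsto \psi^\sigma$ is \emph{trivialised} up to an honest automorphism of the representation rather than merely up to a scalar.
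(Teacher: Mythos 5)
Your proposal takes essentially the same route as the paper's proof: identify $G_{2'}$ with a compact subgroup of $F^\times$ via $\psi^\sigma=\psi^\gamma$, observe that $\gamma$ is a square of some unit $\delta_\sigma$ because $G_{2'}$ has odd pro-order, correct the naive semilinear map $f\mapsto\sigma\cdot f$ by the intertwiner $\omega_{\psi,X}(m_{\delta_\sigma},1)^{-1}$ (equivalently, post-compose with $\omega_{\psi,X}(m_\gamma,1)$ as in the paper), invoke Corollary~\ref{cor:identification_omega_psi_and_omega_psi_gamma2} for the commutation, and then apply Theorem~\ref{thm:pro-etale-descent}. The constructions coincide: your $\delta_\sigma$ is the inverse of the paper's $\gamma$, and $\sigma(\omega_{\psi,X}(m_\gamma,1)f)=\omega_{\psi^\sigma,X}(m_\gamma,1)(\sigma f)=\omega_{\psi,X}(m_\gamma,1)(\sigma f)$ because the Hilbert symbol of a square is trivial, so the two formulas agree.

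One small comparison on the ``coherence'' issue you flag. The paper sidesteps it by choosing $\gamma$ canonically: since squaring is an automorphism of $(\mathcal{O}_F^\times)_{2'}$, the element $\gamma\in(\mathcal{O}_F^\times)_{2'}$ with $(\psi^{\gamma^2})^\sigma=\psi$ is unique, and $\sigma\mapsto\gamma_\sigma$ is automatically multiplicative; the remaining check $\omega_{\psi,X}(m_{\gamma_1},1)\,\omega_{\psi,X}(m_{\gamma_2},1)=\omega_{\psi,X}(m_{\gamma_1\gamma_2},1)$ then follows from the metaplectic cocycle $c_X$ restricted to $M(X)(\mathcal{O}_F^\times)$ being trivial when $p$ is odd (a Hilbert-symbol computation the paper relies on elsewhere, e.g.\ in the proof of Theorem~\ref{thm:descent-even-part-p-not-2}). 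Your softer argument — that the cocycle obstruction takes values in a $2$-group while $G_{2'}$ has odd pro-order — is a valid explanation of \emph{why} the coherence works, and is arguably more conceptual, but it is stated as a heuristic rather than carried through; replacing it by the canonical choice of $\gamma_\sigma\in(\mathcal{O}_F^\times)_{2'}$ and the explicit triviality of the unit-Hilbert symbol makes the verification direct. Apart from this presentational point, your proposal is correct.
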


\begin{proof} We define an $\textup{Mp}(W)$-equivariant semilinear action of $G_{2'}$ on $(\omega_{\psi,X},C_c^\infty(Y,\mathcal{K}))$. For $\sigma \in G_{2'}$, there exists a unique element $\gamma \in (\mathcal{O}_F^\times)_{2'}$ such that $(\psi^{\gamma^2})^\sigma = \psi$. We set:
$$(r_\sigma \cdot f )(y)= \sigma\bigg( (\omega_{\psi,X}(m_\gamma,1) \cdot f) (y)\bigg).$$
Corollary \ref{cor:identification_omega_psi_and_omega_psi_gamma2} ensures $\omega_{\psi,X}(g,\lambda)$ and $r_\sigma$ commute for all $(g,\lambda) \in \textup{Mp}(W)$ and all $\sigma \in G_{2'}$. It is easy to see that the semilinear action thus defined is smooth. As $\textup{Spec}(\mathcal{K}) \to \textup{Spec}(\mathcal{L})$ is proétale, we are in the situation of an effective descent data of Theorem \ref{thm:pro-etale-descent} so the subspace $\mathcal{V}$ of $G_{2'}$-fixed points in $C_c^\infty(Y,\mathcal{K})$ satisfies $\mathcal{V} \otimes_\mathcal{L} \mathcal{K} \overset{\sim}{\to} C_c^\infty(Y,\mathcal{K})$. This $\mathcal{V}$ clearly inherits a smooth action of $\textup{Mp}(W)$. \end{proof}

\subsection{} We can improve this theorem by looking at a finer decomposition of $\omega_{\psi,X}$ involving the so-called even and odd parts of the Weil representation. We only pursue this goal when $R_0=\mathbb{Q}$ because the even and odd parts are known to be absolutely irreducible representation. Denote by
\begin{itemize}
\item  $(\omega_{\psi,X}^+,S_{\psi,X}^+)$ the even functions on $Y$;
\item $(\omega_{\psi,X}^-,S_{\psi,X}^-)$ the odd functions on $Y$.
\end{itemize}
Both are admissible absolutely irreducible representations of $\textup{Mp}(W)$ that can be realised over $\mathcal{L}$ by Theorem \ref{thm:weil-representation-descent-p-not-2}, where $\mathcal{L}/\mathbb{Q}$ has degree $|G_2| = \textup{val}_2(p-1)$. We let $p^*$ be $-p$ if $p \equiv 3 [4]$ and $p$ if $p \equiv 1 [4]$. We rephrase a well-known fact about the dependence of the isomorphism class of $\omega_{\psi,X}$ on the character $\psi$ in the following way:

\begin{theo} \label{thm:character-fields-of-odd-and-even-parts} $R(\omega_{\psi,X}^+) = R(\omega_{\psi,X}^-) = \left\{ \begin{array}{cl} \mathbb{Q} & \textup{ if } q \in p^{2 \mathbb{N}} \\ \mathbb{Q}[\sqrt{p^*}] & \textup{ if } q \in p^{2 \mathbb{N}+1} \end{array} \right.$. \end{theo}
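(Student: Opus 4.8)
The plan is to reduce everything to an identification of a Galois subgroup. Since $\omega^{\pm}_{\psi,X}$ is admissible and absolutely irreducible, Proposition~\ref{prop:rationality-field-equal-character-field} gives $R(\omega^{\pm}_{\psi,X}) = R_r(\omega^{\pm}_{\psi,X})$, and as $\omega^{\pm}_{\psi,X}$ is defined over $\mathcal{K}$ this rationality field is $\mathcal{K}^{H^{\pm}}$ with $H^{\pm} = \{\sigma \in G \ | \ {}^{\sigma}\omega^{\pm}_{\psi,X} \simeq \omega^{\pm}_{\psi,X}\}$ and $G = \textup{Gal}(\mathcal{K}/\mathbb{Q})$. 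So the theorem amounts to computing $H^{+}$ and $H^{-}$. First I would record two inputs. On the one hand, the semilinear isomorphism of Section~\ref{sec:weil_rep_dependance_on_psi}, namely $f \mapsto \sigma \cdot f$, preserves the parity of functions on $Y$, so it restricts to each part and gives ${}^{\sigma}\omega^{\pm}_{\psi,X} \simeq \omega^{\pm}_{\psi^{\sigma},X}$. On the other hand, $\omega^{\pm}_{\psi^{\gamma},X} \simeq \omega^{\pm}_{\psi,X}$ if and only if $\gamma \in F^{\times 2}$: the ``if'' direction for $\gamma$ a unit square is Corollary~\ref{cor:identification_omega_psi_and_omega_psi_gamma2} (conjugation by $m_{\gamma}$ again acts through a parity-preserving operator, hence descends to the two parts), and the ``only if'' direction is the classical fact recalled before the statement. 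Combining, $H^{\pm} = \{\sigma \in G \ | \ \gamma_{\sigma} \in F^{\times 2}\}$, where $\gamma_{\sigma} \in F^{\times}$ is the unique element with $\psi^{\sigma} = \psi^{\gamma_{\sigma}}$; by the discussion in Section~\ref{sec:weil_rep_dependance_on_psi}, $\gamma_{\sigma}$ exists, $\sigma \mapsto \gamma_{\sigma}$ is a group isomorphism of $G$ onto a compact subgroup of $F^{\times}$, and in particular $\gamma_{\sigma} \in \mathcal{O}_F^{\times}$, so $H^{\pm} = \{\sigma \in G \ | \ \gamma_{\sigma} \in (\mathcal{O}_F^{\times})^2\}$.

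The second step is to make $\gamma_{\sigma}$ explicit modulo squares of units, and this is where the cyclotomic character enters. When $\textup{char}(F) = 0$ we have $\mathbb{Z}_p \subseteq \mathcal{O}_F$, and testing on a standard additive character $\psi_0$ built from $\textup{Tr}_{F/\mathbb{Q}_p}$ and the fixed inclusion $\mathbb{Q}_p/\mathbb{Z}_p \hookrightarrow \mu_{p^{\infty}}(\mathcal{K})$ shows $\psi_0^{\sigma} = \psi_0^{u_{\sigma}}$, where $\sigma(\zeta_{p^k}) = \zeta_{p^k}^{u_{\sigma}}$ defines the $p$-adic cyclotomic character $\sigma \mapsto u_{\sigma} \in \mathbb{Z}_p^{\times}$; since $\gamma_{\sigma}$ does not depend on the choice of $\psi$ in its $F^{\times}$-orbit, this gives $\gamma_{\sigma} = u_{\sigma} \in \mathbb{Z}_p^{\times} \subseteq \mathcal{O}_F^{\times}$. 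When $\textup{char}(F) > 0$ the same computation gives $\gamma_{\sigma} = u_{\sigma} \in \mathbb{F}_p^{\times} \subseteq \mathbb{F}_q^{\times} = k_F^{\times}$, the mod-$p$ cyclotomic character. In either case the reduction $\overline{\gamma_{\sigma}}$ lies in the prime field $\mathbb{F}_p^{\times} \subseteq \mathbb{F}_q^{\times}$, and since $p$ is odd, Hensel's lemma gives that a unit of $\mathcal{O}_F$ is a square exactly when its residue is a square in $\mathbb{F}_q^{\times}$; hence $H^{\pm} = \{\sigma \in G \ | \ \overline{\gamma_{\sigma}} \in (\mathbb{F}_q^{\times})^2\}$.

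The last step is purely arithmetic: deciding when $x \in \mathbb{F}_p^{\times}$ is a square in $\mathbb{F}_q^{\times}$, i.e. when $x^{(q-1)/2} = 1$. Using $\frac{q-1}{p-1} = 1 + p + \cdots + p^{f-1} \equiv f \pmod 2$, there are two cases. If $f$ is even then $(p-1) \mid \frac{q-1}{2}$, so $x^{(q-1)/2} = 1$ for every $x \in \mathbb{F}_p^{\times}$, whence $H^{\pm} = G$ and $R(\omega^{\pm}_{\psi,X}) = \mathcal{K}^{G} = \mathbb{Q}$. If $f$ is odd then $\frac{q-1}{p-1}$ is odd and $x^{(q-1)/2} = \big(x^{(p-1)/2}\big)^{(q-1)/(p-1)} = \big(\tfrac{x}{p}\big)$ since $x^{(p-1)/2} = \big(\tfrac{x}{p}\big) \in \{\pm 1\}$; hence $H^{\pm}$ is the kernel of the quadratic character $\sigma \mapsto \big(\tfrac{\overline{\gamma_{\sigma}}}{p}\big)$ of $G$, which under $\sigma \mapsto \gamma_{\sigma}$ is the Legendre symbol, the unique nontrivial quadratic character of $\textup{Gal}(\mathbb{Q}(\zeta_p)/\mathbb{Q})$. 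Its fixed field is the unique quadratic subfield of $\mathbb{Q}(\zeta_p) \subseteq \mathcal{K}$, namely $\mathbb{Q}[\sqrt{p^{*}}]$ with $p^{*} = (-1)^{(p-1)/2}p$, by Gauss; so $R(\omega^{\pm}_{\psi,X}) = \mathbb{Q}[\sqrt{p^{*}}]$. Since nothing above distinguished the $+$ part from the $-$ part, the two fields coincide.

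I expect the main obstacle to be the ``only if'' half of the criterion $\omega^{\pm}_{\psi^{\gamma},X} \simeq \omega^{\pm}_{\psi,X} \Leftrightarrow \gamma \in F^{\times 2}$, that is, the assertion that the even part and the odd part \emph{each} remember $\psi$ modulo $F^{\times 2}$ rather than only their sum $\omega_{\psi,X}$ doing so. The ``if'' half and the inclusion $H^{\pm} \supseteq \{\sigma : \gamma_{\sigma} \in (\mathcal{O}_F^{\times})^2\}$ come for free from Corollary~\ref{cor:identification_omega_psi_and_omega_psi_gamma2} together with the observation that $m_{\gamma}$ acts by a parity-preserving operator, and this suffices for everything since $\gamma_{\sigma}$ is automatically a unit; but the converse is genuinely representation-theoretic and I would import it from the classical theory (twisted Jacquet modules, resp. the explicit character of the Weil representation of $\textup{Sp}(\mathbb{F}_q)$), refining \cite[9.4.3]{theta_book}. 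Once that is granted, the identification of $\gamma_{\sigma}$ with the cyclotomic character and the count of squares in $\mathbb{F}_q^{\times}$ are routine.
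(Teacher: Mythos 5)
Your argument is correct and follows the same route as the paper: reduce to computing $H^\pm = \{\sigma \in G : {}^\sigma\omega^\pm_{\psi,X} \simeq \omega^\pm_{\psi,X}\}$, pass via $\psi^\sigma = \psi^{\gamma_\sigma}$ to the criterion $\gamma_\sigma \in \mathcal{O}_F^{\times 2}$, and determine when $G \cap \mathcal{O}_F^{\times 2}$ has index $1$ or $2$ in $G$ (equivalently, whether $f$ is even), with Gauss identifying the corresponding quadratic subfield as $\mathbb{Q}[\sqrt{p^*}]$. The paper asserts the arithmetic in one line and cites its Section 6.4/6.5 for the crucial ``$\omega^\pm_{\psi^\gamma} \simeq \omega^\pm_{\psi}$ iff $\gamma \in F^{\times 2}$'' (for each part separately), whereas you spell out the cyclotomic-character identification, the Hensel reduction, and the count of squares in $\mathbb{F}_q^\times$, and you correctly flag that the ``only if'' half of that criterion -- for the even and odd parts individually, not just their sum -- is the one classical input that has to be imported.
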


\begin{proof} Via the identification of $G$ with a subgroup of $\mathcal{O}_F^\times$, let $\sigma \in G$ and let $\gamma \in \mathcal{O}_F^\times$ be the unique element such that $\psi^\sigma = \psi^\gamma$. The map $\sigma \mapsto \lambda$ induces a group isomorphism with $\mathbb{Z}/(p-1)\mathbb{Z}$ or $\mathbb{Z}_p^\times$, as embedded subgroups in $\mathcal{O}_F^\times$, according to whether $\textup{char}(F)$ is positive or not. We know by Section \ref{sec:weil_rep_dependance_on_psi} that $\omega_{\psi^\gamma,X}^\pm \simeq \omega_{\psi,X}^\pm$ if and only if $\gamma \in \mathcal{O}_F^{\times 2}$.

Therefore $H(\omega_{\psi,X}^\pm) = \{ \sigma \in G \ | \ \omega_{\psi^\sigma,X}^\pm \simeq \omega_{\psi,X}^\pm \} = G \cap \mathcal{O}_F^{\times 2}$. The subgroup $G \cap \mathcal{O}_F^{\times 2}$ of $G$ is equal to $G$ if and only if $q \in p^{2 \mathbb{N}}$, and is a subgroup of index $2$ otherwise. As a result, the fixed field of $G \cap \mathcal{O}_F^{\times 2}$ gives the character field, which is either $\mathbb{Q}$ or $\mathbb{Q}[\sqrt{p^*}]$ according to whether $q$ is a square or not. \end{proof}

\subsection{} We first deal with the even part, as it is simpler.

\begin{theo} \label{thm:descent-even-part-p-not-2} The even part $\omega_{\psi,X}^+$ can be realised over its character field $R(\omega_{\psi,X}^+)$. \end{theo}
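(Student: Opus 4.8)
The plan is to show that the obstruction to descending $\omega_{\psi,X}^+$ from $\mathcal{L}$ all the way down to its character field $R(\omega_{\psi,X}^+)$ vanishes. By Theorem \ref{thm:weil-representation-descent-p-not-2} we already have a model over $\mathcal{L}$, where $\mathcal{L}/\mathbb{Q}$ has degree $|G_2| = \mathrm{val}_2(p-1)$; write $K = R(\omega_{\psi,X}^+)$, which by Theorem \ref{thm:character-fields-of-odd-and-even-parts} is $\mathbb{Q}$ when $q$ is a square and $\mathbb{Q}[\sqrt{p^*}]$ otherwise. Since $\omega_{\psi,X}^+$ is absolutely irreducible, by Proposition \ref{prop:rationality-field-equal-character-field} its rationality field equals $K$, so for every $\sigma$ in the relevant Galois group we have ${}^\sigma\omega_{\psi,X}^+ \simeq \omega_{\psi,X}^+$. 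The obstruction to realising an absolutely irreducible representation over its rationality field is its Schur index $m(\omega_{\psi,X}^+)$, so the theorem is equivalent to the assertion $m(\omega_{\psi,X}^+) = 1$; this is a two-cocycle class in $H^2(\mathrm{Gal}(\bar{\mathbb{Q}}/K), \bar{\mathbb{Q}}^\times)$, i.e. a class in the Brauer group $\mathrm{Br}(K)$, and I would aim to show it is trivial.

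First I would set up the explicit semilinear descent data over $K$, mimicking the proof of Theorem \ref{thm:weil-representation-descent-p-not-2}: for each $\sigma$ in $H(\omega_{\psi,X}^+) = G \cap \mathcal{O}_F^{\times 2}$ (as identified inside $\mathrm{Aut}(\mathcal{K})$), pick $\gamma_\sigma \in \mathcal{O}_F^\times$ with $(\psi^{\gamma_\sigma^2})^\sigma = \psi$ and set $r_\sigma \cdot f = \sigma\bigl((\omega_{\psi,X}(m_{\gamma_\sigma},1)\cdot f)\bigr)$, which commutes with the $\mathrm{Mp}(W)$-action by Corollary \ref{cor:identification_omega_psi_and_omega_psi_gamma2}. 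Because the even part is the $\{\pm1\}$-isotypic summand $C_c^\infty(Y,\mathcal{K})^+$ of even functions and the operators $\omega_{\psi,X}(m_{\gamma},1)$ preserve parity, the $r_\sigma$ restrict to semilinear operators on $S_{\psi,X}^+$. The composite $r_\sigma r_\tau r_{\sigma\tau}^{-1}$ is then an $\mathrm{Mp}(W)$-equivariant automorphism of the absolutely irreducible representation $\omega_{\psi,X}^+$, hence a scalar $c(\sigma,\tau) \in \bar{\mathbb{Q}}^\times$ by Schur's lemma; this $c$ is the obstruction cocycle. The next step is to compute it: since $m_\gamma m_{\gamma'} = m_{\gamma\gamma'}$ in $\mathrm{Sp}(W)$ but only in $\mathrm{Mp}(W)$ up to the metaplectic cocycle $c_X(m_\gamma, m_{\gamma'}) \in \{\pm1\}$, the cocycle $c$ takes values in $\{\pm1\}$ and is, up to the $\sigma$-twist and the choice of the $\gamma_\sigma$, pulled back from the metaplectic cocycle restricted to the torus elements $m_\gamma$ together with a square-class sign $(\gamma_\sigma, \gamma_\tau)_F$ coming from Proposition \ref{prop:formulas-weil-rep-character-gamma}.

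The hard part will be showing this $\{\pm1\}$-valued class is trivial in $\mathrm{Br}(K)$, i.e. that the corresponding quaternion algebra splits. I would reduce it to a concrete statement: the cocycle $c$ factors through the quotient $G \cap \mathcal{O}_F^{\times 2} \to G/(G \cap \mathcal{O}_F^{\times 2})^{?}$, and its class is cut out by a single element of $K^\times/K^{\times 2}$ together with the quadratic character defining $K/\mathbb{Q}$ (essentially the norm problem alluded to in Lemma \ref{lem:descent-impossible-along-mathcal-L}). When $q \in p^{2\mathbb{N}}$, $K = \mathbb{Q}$ and $H(\omega_{\psi,X}^+)$ is all of $G$, which is cyclic, so $H^2$ of a cyclic group reduces the splitting to a norm being a square — and here I expect the relevant value to already be a square because the Weil factor on the even part has no $\sqrt{-1}$-type obstruction (the even piece occurs in an explicit principal series, as Prasad notes). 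When $q \in p^{2\mathbb{N}+1}$, $K = \mathbb{Q}[\sqrt{p^*}]$ and $H(\omega_{\psi,X}^+)$ is the index-$2$ subgroup $G \cap \mathcal{O}_F^{\times2}$; the cocycle is then a class over this cyclic extension of $K$, and I would again unwind it to a quaternion symbol $(a, b)_K$ and check it splits by a direct local-global computation, using that the only primes that could ramify are those above $p$ (and $2$ and $\infty$), and reconciling the signs via the quadratic reciprocity identity that makes $p^*$ appear. Concretely, the ramification-at-$p$ contribution and the contribution at $\infty$ cancel because $p^*$ was chosen precisely so that $\mathbb{Q}[\sqrt{p^*}]$ is the field where the relevant Gauss-sum sign becomes a norm; that cancellation is the crux and is where I anticipate the real work lies. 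Once $m(\omega_{\psi,X}^+) = 1$ is established, the descent data over $\mathcal{L}$ descends further to $K$ by Theorem \ref{thm:pro-etale-descent} (or rather its finite-group analogue), giving the model over $R(\omega_{\psi,X}^+)$ and completing the proof.
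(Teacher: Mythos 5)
Your proposal is correct in its \emph{framing} — the question does reduce to whether a certain $\{\pm1\}$-valued descent $2$-cocycle is trivial, and it is legitimate to view this through the lens of the Schur index living in $\mathrm{Br}(K)$. But the paper takes a much shorter route, and your version stops short of the decisive step, which you yourself flag as ``where I anticipate the real work lies.'' The local-global Brauer-group computation you sketch is never carried out, so as written this is a plan rather than a proof.

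More importantly, the paper's argument shows there is \emph{no} cohomological computation to do: the cocycle is \emph{identically} $1$, not merely cohomologically trivial. Two elementary facts make this work. First, for $p$ odd the quadratic Hilbert symbol $(-,-)_F$ is trivial on $\mathcal{O}_F^\times \times \mathcal{O}_F^\times$, so the ``square-class sign $(\gamma_\sigma,\gamma_\tau)_F$'' you identify as a potential obstruction is always $+1$ once you recall $\gamma_\sigma, \gamma_\tau \in \mathcal{O}_F^\times$ — for odd $p$ the metaplectic cocycle and the Weil-factor contribution on elements $m_\gamma$ with $\gamma \in \mathcal{O}_F^\times$ vanish. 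Second (and this is what is specific to the \emph{even} part and to the case $q\equiv 1\,[4]$, the only case requiring a nontrivial descent beyond $\mathcal{L}$), when $-1 \in \mathcal{O}_F^{\times 2}$ one has $\Omega_{1,-1}^\psi=1$ and $(-1,\gamma)_F=1$, giving $\omega_{\psi,X}^+(m_{-\gamma},1)=\omega_{\psi,X}^+(m_\gamma,1)$. This makes the formula $r_\sigma \cdot f = \sigma\bigl(\omega_{\psi,X}^+(m_\gamma,1)f\bigr)$ with $\sigma=\gamma^2$ independent of the choice of square root $\gamma$, so $\sigma \mapsto r_\sigma$ is an honest group homomorphism of $G\cap\mathcal{O}_F^{\times 2}$ into semilinear automorphisms; Theorem \ref{thm:pro-etale-descent} then descends $\omega_{\psi,X}^+$ to $\mathcal{K}^{G\cap\mathcal{O}_F^{\times 2}}=R(\omega_{\psi,X}^+)$. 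The remaining case $q\equiv 3\,[4]$ is handled by the observation that $\mathcal{L}=\mathbb{Q}[\sqrt{-p}]$ is already the character field, so Theorem \ref{thm:weil-representation-descent-p-not-2} suffices. In contrast, your Brauer-group route, even if it could be made to work, obscures the structural reason the even part is unobstructed — and it gives no warning that the analogous argument genuinely \emph{fails} for the odd part (Theorem \ref{thm:descent-odd-part-p-not-2}), where the Schur index is $2$ in some cases precisely because $\omega_{\psi,X}^-(m_{-1},1) = -\textup{id}$ ruins the well-definedness of $r_\sigma$ and forces the norm problem of Lemma \ref{lem:descent-impossible-along-mathcal-L}. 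Your appeal to ``the even piece occurs in an explicit principal series'' is a heuristic borrowed from Prasad, not an argument in the present framework.
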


\begin{proof} We first consider the case $q \equiv 3 [4]$ or equivalently $p \equiv 3 [4]$ and $q \in p^{2 \mathbb{N}+1}$. Note that we already have that $\mathcal{L} = \mathbb{Q}[\sqrt{-p}] = R(\omega_{\psi,X}^+)$ and therefore the theorem holds.

We can assume $-1 \in \mathcal{O}_F^{\times 2}$ as this is equivalent to $q \equiv 1 [4]$. Therefore $\Omega_{1,-1}^\psi = 1$ and $(-1,\gamma)_F=1$ for all $\gamma \in F^\times$. In particular $\omega_{\psi,X}^+((-\textup{id}_W,1)) = \textup{id}_{S_{\psi,X}^+}$ and
$$\omega_{\psi,X}^+(m_{- \gamma},1) = \omega_{\psi,X}^+(m_\gamma,1) \textup{ for all } \gamma \in F^\times.$$ 
we recall that $G \cap \mathcal{O}_F^{\times 2} \subseteq G$ is a subgroup of index $1$ or $2$ according to whether $q$ is a square or not. Let $\sigma \in G \cap \mathcal{O}_F^{\times 2} \mapsto \gamma \in \mathcal{O}_F^\times$ be any map subject to the relation $\sigma = \gamma^2$. 

Because $p$ is odd, the restriction of the quadratic Hilbert symbol to $\mathcal{O}_F^\times \times \mathcal{O}_F^\times$ is trivial. Since $\omega_{\psi,X}^+(m_{-\gamma},1) = \omega_{\psi,X}^+(m_\gamma,1)$, the earlier descent formula
$$(r_\sigma \cdot f )(y)= \sigma\bigg( (\omega_{\psi,X}^+(m_\gamma,1) \cdot f) (y)\bigg)$$
still defines a continuous semi-linear action of $G \cap \mathcal{O}_F^{\times 2}$ on $\omega_{\psi,X}^+$. So the representation can be realised over the fixed field of $G \cap \mathcal{O}_F^{\times 2}$ which is $R(\omega_{\psi,X}^+)$. \end{proof}

\subsection{} The odd part requires more work.

\begin{theo} \label{thm:descent-odd-part-p-not-2} The odd part $\omega_{\psi,X}^-$ can be realised over
\begin{itemize}
\item $R(\omega_{\psi,X}^-)$ if $p \equiv 3 [4]$ and $q \in p^{2\mathbb{N} + 1}$;
\item $R(\omega_{\psi,X}^-)[\sqrt{-p}]$ and $m(\omega_{\psi,X}^-) = 2$ otherwise.
\end{itemize} \end{theo}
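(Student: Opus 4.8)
The plan is to handle the easy case by hand and reduce the rest to an explicit obstruction-norm (Brauer-group) computation. If $p\equiv 3\,[4]$ and $q\in p^{2\mathbb{N}+1}$ then $\mathrm{val}_2(p-1)=1$, so $\mathcal{L}=\mathbb{Q}[\sqrt{p^*}]=\mathbb{Q}[\sqrt{-p}]$, which by Theorem~\ref{thm:character-fields-of-odd-and-even-parts} equals $R(\omega_{\psi,X}^-)$; then Theorem~\ref{thm:weil-representation-descent-p-not-2} realises $\omega_{\psi,X}^-$ over $R(\omega_{\psi,X}^-)$, which is the first bullet. In every remaining case one has $q\equiv 1\,[4]$, so $-1\in\mathcal{O}_F^{\times 2}$; hence $(-1,\gamma)_F=1$ and $\Omega_{1,-1}^\psi=1$ (as in the proof of Theorem~\ref{thm:descent-even-part-p-not-2}), and since $\det(-\mathrm{id}_X)=(-1)^m$ while $f\mapsto f(-\,\cdot\,)$ acts by $-1$ on odd functions, this yields the single identity that drives the argument,
$$\omega_{\psi,X}^-(m_{-1},1)=-\,\mathrm{id}_{S_{\psi,X}^-},$$
to be contrasted with $\omega_{\psi,X}^+(m_{-1},1)=+\,\mathrm{id}$; note the value is independent of $m$ because $\Omega_{1,\pm1}^\psi=1$, which is why the realisation field will not depend on $\dim W$.

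I would then try to descend $\omega_{\psi,X}^-$ to its character field $R(\omega_{\psi,X}^-)=\mathcal{K}^{H(\omega_{\psi,X}^-)}$, $H(\omega_{\psi,X}^-)=G\cap\mathcal{O}_F^{\times 2}$, by the recipe of Theorem~\ref{thm:weil-representation-descent-p-not-2}: for each $\sigma$ in the relevant Galois group choose $\delta_\sigma\in\mathcal{O}_F^\times$ with $(\psi^{\delta_\sigma^2})^\sigma=\psi$ and set $r_\sigma\cdot f=\sigma\big(\omega_{\psi,X}^-(m_{\delta_\sigma},1)\cdot f\big)$. As $p$ is odd the quadratic Hilbert symbol is trivial on $\mathcal{O}_F^\times$, so Corollary~\ref{cor:identification_omega_psi_and_omega_psi_gamma2} and Proposition~\ref{prop:formulas-weil-rep-character-gamma} make each $r_\sigma$ $\mathrm{Mp}(W)$-equivariant and $\sigma$-semilinear; the only defect is the sign ambiguity $\delta_{\sigma\tau}=\pm\,\delta_\sigma\delta_\tau$, which with the displayed identity gives
$$r_\sigma r_\tau=\epsilon(\sigma,\tau)\,r_{\sigma\tau},\qquad \epsilon(\sigma,\tau)=\frac{\delta_{\sigma\tau}}{\delta_\sigma\delta_\tau}\in\{\pm1\}.$$
So $\epsilon$ is the $2$-cocycle of the central extension $1\to\{\pm1\}\to\mathcal{O}_F^\times\xrightarrow{\,x\mapsto x^2\,}\mathcal{O}_F^{\times 2}\to 1$ pulled back along $H(\omega_{\psi,X}^-)\hookrightarrow\mathcal{O}_F^{\times 2}$, and — this is the explicit obstruction-norm problem mentioned in the introduction — $\omega_{\psi,X}^-$ is realisable over an extension $E'$ of $R(\omega_{\psi,X}^-)$ if and only if the image of $[\epsilon]$ in $\mathrm{Br}(E')$ vanishes; this image is a quaternion-algebra class $D$, and $D$ is the base change to $R(\omega_{\psi,X}^-)$ of the quaternion algebra $D_0$ over $\mathbb{Q}$ attached to $-1\in\mathbb{Q}^\times/N_{\mathbb{Q}(\zeta_p)/\mathbb{Q}}(\mathbb{Q}(\zeta_p)^\times)$. (When $\mathrm{char}(F)=0$ one replaces the infinite field $\mathcal{K}$ by the finite layer $\mathcal{L}$, over which $\omega_{\psi,X}^-$ is already defined.)

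It then remains to compute $D_0$ and conclude. At a finite place $\ell\neq p$ the extension $\mathbb{Q}(\zeta_p)/\mathbb{Q}$ is unramified, so the unit $-1$ is a local norm and $D_0$ is unramified; at $\infty$ the field $\mathbb{Q}(\zeta_p)$ is totally complex, so local norms are positive whereas $-1<0$, so $D_0$ is ramified; at $p$ the extension $\mathbb{Q}_p(\zeta_p)/\mathbb{Q}_p$ is tamely totally ramified of degree $p-1$, so the norm group of the units is the group of $(p-1)$-th powers of units times the principal units, and $-1$ is not of this form, so $D_0$ is ramified at $p$ and nowhere else; thus $D_0$ has index $2$. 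Since $R(\omega_{\psi,X}^-)$ — which here is $\mathbb{Q}$, or $\mathbb{Q}[\sqrt{p}]$ in the nonsquare case (where necessarily $p\equiv 1\,[4]$) — is totally real, $D_0$ remains ramified at its archimedean place(s) after base change, so $D$ is nonsplit and $m(\omega_{\psi,X}^-)=2$; on the other hand $R(\omega_{\psi,X}^-)[\sqrt{-p}]$ is totally imaginary and ramified above $p$, so it splits $D_0$ at every archimedean place and at $p$, while $D_0$ is unramified elsewhere, whence $D_0\otimes_{\mathbb{Q}}R(\omega_{\psi,X}^-)[\sqrt{-p}]$ is everywhere locally split, hence split, so $\omega_{\psi,X}^-$ is realisable over $R(\omega_{\psi,X}^-)[\sqrt{-p}]$ (for $p\equiv 3\,[4]$ and $q\in p^{2\mathbb{N}}$ this last point is also immediate from Theorem~\ref{thm:weil-representation-descent-p-not-2}, as then $R(\omega_{\psi,X}^-)[\sqrt{-p}]=\mathcal{L}$). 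The step I expect to be the main obstacle is the explicit, prime-by-prime local computation identifying $D_0$ as the quaternion algebra ramified exactly at $\{p,\infty\}$ — in particular the tame local norm computation at $p$ — together with some care in the $\mathrm{char}(F)=0$ case to remain inside the finite layer $\mathcal{L}$ rather than the infinite cyclotomic field $\mathcal{K}$.
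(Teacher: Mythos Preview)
Your proposal is correct and follows essentially the same strategy as the paper: identify the obstruction to descending $\omega_{\psi,X}^-$ from $\mathcal{L}$ to $R(\omega_{\psi,X}^-)$ as a $2$-torsion Brauer class, then determine that class by a place-by-place computation. The core input is identical --- the identity $\omega_{\psi,X}^-(m_{-1},1)=-\mathrm{id}$, which produces the relation $r_\tau^{2^{k_a}}=-\mathrm{id}$ in the paper and the sign cocycle $\epsilon$ in your write-up --- and both approaches conclude that the obstruction is the quaternion class ramified exactly at $p$ and $\infty$, split by $R(\omega_{\psi,X}^-)[\sqrt{-p}]$.

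The packaging differs in two minor respects. First, the paper computes the endomorphism ring $A=\mathrm{End}_{R[\mathrm{Mp}(W)]}(\omega_{\psi,X}^-|_R)$ explicitly as the cyclic algebra $\mathcal{L}'[X_\tau]/(X_\tau^{2^{k_a}}+1)=(\mathcal{L}/R,\tau,-1)$ and invokes the duality $(\omega_{\psi,X}^-)^\vee\simeq\omega_{\psi,X}^-$ to see $A\simeq A^{\mathrm{op}}$ has order at most $2$ in $\mathrm{Br}(R)$; you get order $\le 2$ for free from the fact that $\epsilon$ is $\{\pm1\}$-valued. Second, the paper shows non-splitting via the CM-field norm obstruction (Lemma \ref{lem:descent-impossible-along-mathcal-L}: $\lambda\tau(\lambda)>0$ in a CM-field, so no $\lambda r_\tau$ squares to $+\mathrm{id}$) and then deduces ramification at $p$ from reciprocity, whereas you compute the local invariants at $\infty$ and at $p$ directly. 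These are two presentations of the same content; your direct tame-norm computation at $p$ is a nice alternative to the paper's reciprocity shortcut, while the paper's CM-norm lemma makes the archimedean obstruction very concrete.
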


\begin{proof} The easiest case to deal with is $q \equiv 3 [4]$ or equivalently $p \equiv 3 [4]$ and $q \in p^{2\mathbb{N}+1}$. In this case $\mathcal{L} = \mathbb{Q}[\sqrt{-p}]$ already, and Theorem \ref{thm:character-fields-of-odd-and-even-parts} ensures this is the character field.

To deal with the other two cases, we use information about the endomorphism ring of $\omega_{\psi,X}^-$. First of all, for all $\mathbb{Q} \subseteq \mathcal{M} \subseteq \mathcal{L}$, we have
$$\omega_{\psi,X}^-|_\mathcal{M} \otimes_\mathcal{M} \mathcal{L} \simeq \bigoplus_{\sigma \in \textup{Gal}(\mathcal{L}/\mathcal{M})} \omega_{\psi^\sigma,X}^-.$$
Recall that $G_2 = \textup{Gal}(\mathcal{L}/\mathbb{Q})$ is a finite cyclic group of order $2^k$ where $k=\textup{val}_2(p-1) \geq 2$. According to Theorem \ref{thm:character-fields-of-odd-and-even-parts}, two cases now arise
\begin{itemize}
\item $\omega_{\psi^\sigma,X}^- \simeq \omega_{\psi,X}^-$ for all $\sigma \in G_2$ \textit{i.e.} $G \cap \mathcal{O}_F^{\times 2} = G$ \textit{i.e.} $q$ is a square;
\item there exists $\sigma \in G_2$ such that $\omega_{\psi^\sigma,X}^-$ is not isomorphic to $\omega_{\psi,X}^-$.
\end{itemize}
In particular this distinction implies that the character field is $\mathbb{Q}$ or $\mathbb{Q}[\sqrt{p^*}]$.

We first prove a negative result: it is impossible to descend further than $\mathcal{L}$ along $\mathcal{L}$. 

\begin{lem} \label{lem:descent-impossible-along-mathcal-L} The representation $\omega_{\psi,X}^- \in \textup{Rep}_\mathcal{L}(\textup{Mp}(W))$ is not defined over any strict subextension of $\mathcal{L}$. \end{lem}

\begin{proof} Since $\mathcal{L}/\mathbb{Q}$ is cyclic, the extension $\mathcal{L}/\mathcal{L}'$ is cyclic for any subextension $\mathcal{L}'$ of $\mathcal{L}$. By cyclicity, if any descent can be achieved along $\mathcal{L}$, then it can be achieved over the totally real subextension $\mathcal{L}_0$ of index $2$ of the CM-field $\mathcal{L}$. Let $\tau \in \textup{Gal}(\mathcal{L}/\mathcal{L}_0) \subseteq \textup{Gal}(\mathcal{L}/\mathbb{Q})$ be the complex conjugation, which is the unique element of order $2$.

If $\omega_{\psi,X}^-$ descends to $\mathcal{L}_0$, there is a $\tau$-linear $\textup{Mp}(W)$-automorphism of $\omega_{\psi,X}^-$ of order $2$. Note that $\tau$-linear $\textup{Mp}(W)$-automorphisms are unique up to a scalar thanks to Schur's lemma. Choose $i \in \mathcal{O}_F^\times$ such that $i^2=-1$. Then
$$r_\tau \cdot f = \tau(\omega_{\psi,X}^-(m_i,1)f)$$
is a $\tau$-linear $\textup{Mp}(W)$-automorphism of $\omega_{\psi,X}^-$. It satisfies $r_\tau^2 = \omega_{\psi,X}^-(m_{-1},1) = - \textup{id}_{S_{\psi,X}^-}$.

Remark that any $\lambda r_\tau$ with $\lambda \in \mathcal{L}$ satisfies 
$$(\lambda r_\tau)^2 = - \lambda \tau(\lambda) \textup{id}_{S_{\psi,X}^-}.$$
But $\lambda \tau(\lambda) \in \mathbb{R}_+$ since $\mathcal{L}$ is a CM-field, so there is no $\tau$-linear $\textup{Mp}(W)$-automorphism of order $2$ and $\omega_{\psi,X}^-$ does not descend to any strict subextension of $\mathcal{L}$. \end{proof}

Let $\sigma$ be a generator of $G_2$. Then $G_2 \cap \mathcal{O}_F^{\times 2} = \langle \sigma^a \rangle$ where $a=1$ if $q$ is a square and $a=2$ otherwise. We set $\tau = \sigma^a$ and choose $\alpha \in \mathcal{O}_F^\times$ such that $\tau=\alpha^2$. The element $\tau$ has order $2^{k_a}$ where $k_a=k-a+1$ is $k$ or $k-1$. Define 
$$r_\tau \cdot f = \tau(\omega_{\psi,X}^-(m_\alpha,1)f).$$
Then $r_\tau^{2^{k_a}} = \omega_{\psi,X}^-(m_{-1},1) = - \textup{id}_{S_{\psi,X}^-}$.

To simplify notations we set $R$ for the character field, which is also the fixed field of $G_2 \cap \mathcal{O}_F^{\times 2}$ in $\mathcal{L}$. Let $A = \textup{End}_{R[\textup{Mp}(W)]}(\omega_{\psi,X}^-|_R)$. On the one hand $A$ contains $\mathcal{L}$ and $r_\tau$, so it contains the twisted $R$-algebra generated by $\mathcal{L}$ and $r_\tau$ which is of the form 
$$A_\tau = \mathcal{L}'[X_\tau]/(X_\tau^{2^{k_a}}+1).$$
On the other hand $\omega_{\psi,X}^-|_R \otimes_R \mathcal{L} \simeq 2^{k_a} \cdot \omega_{\psi,X}^-$, therefore
$$\textup{End}_{\mathcal{L}[\textup{Mp}(W)]}(\omega_{\psi,X}^-|_R \otimes_R \mathcal{L}) \simeq \mathcal{M}_{2^{k_a}}(\mathcal{L}).$$
Comparing the latter with the dimension of $A_\tau \otimes_R \mathcal{L}$, we deduce that $A_\tau \otimes_R \mathcal{L} \simeq \mathcal{M}_{2^{k_a}}(\mathcal{L})$. Therefore $A_\tau = A$ is a central simple $R$-algebra.

\begin{lem} Let $D/R$ be the unique quaternion division algebra ramified at $p$ and $\infty$. Then $A$ is isomorphic to $\mathcal{M}_{2^{k_a-1}}(D)$. \end{lem}

\begin{proof} We know that $A = A_\tau = \mathcal{L}'[X_\tau]/(X_\tau^{2^{k_a}}+1)$. We show that $A \otimes_\mathbb{Q} \mathbb{Q}_v$ is split if $v$ is a place different from $p$ or $\{\infty\}$. Note that $R_v = R \otimes_\mathbb{Q} \mathbb{Q}_v$ is an unramified extension of $\mathbb{Q}_v$ since $\mathcal{L}/\mathbb{Q}$ is only ramified at $p$, and likewise $\mathcal{L}_v = \mathcal{L} \otimes_\mathbb{Q} \mathbb{Q}_v$ is unramified. In particular, there exists $\lambda_v \in \mathcal{L}_v$ such that $N_{\mathcal{L}_v/R_v}(\lambda_v) = -1$ by local class field theory. As a result $\lambda_v r_\tau$ has order $2^{k_a}$ and we obtain a semi-linear action of $\langle \tau \rangle = H_2 \cap \mathcal{O}_F^{\times 2}$ on the Weil representation, which can be realised as $\mathcal{V} \in \textup{Irr}_R(\textup{Mp}(W))$ and therefore 
$$(\omega_{\psi,X}^- |_R) \otimes_R R_v \simeq (\omega_{\psi,X}^- \otimes_\mathcal{L} \mathcal{L}_v)|_{R_v}  \simeq 2^{k_a} \mathcal{V}.$$
This implies that $A \otimes_R R_v$ is split.

We claim that $A \simeq A^\textup{op}$. Indeed, since $-1$ is a square in $\mathcal{O}_F^\times$, we have $\omega_{\psi^{-1},X}^- \simeq \omega_{\psi,X}^-$. Furthermore, we have the well-known result 
$$(\omega_{\psi,X}^-)^\vee \simeq \omega_{\psi^{-1},X}^-.$$
When $V$ is a representation, there is a canonical identification $\textup{End}(V^\vee) \cong \textup{End}(V)^{\textup{op}}$. Therefore $A$ is isomorphic to $A^{\textup{op}}$ so the order of $A$ in the Brauer group is either $1$ or $2$.

Since $R$ is a number field, the $2$-torsion in the Brauer group of $R$ is generated by quaternion algebras. Therefore there exists a quaternion algebra $D$ over $R$ such that $A \simeq \mathcal{M}_{2^{k_a-1}}(D)$. We proved that $A$ splits at all places of $\mathbb{Q}$ different from $p$ and $\infty$. But $A$ can't be split by Lemma \ref{lem:descent-impossible-along-mathcal-L}, otherwise we would be able to descend the Weil representation to a strict subextension of $\mathcal{L}$. Because $R$ is totally real and $R/\mathbb{Q}$ is totally ramified at $p$, this implies that $D$ is the unique quaternion algebra ramified at $p$ and $\infty$. The lemma is now proved. \end{proof}

Since $\omega_{\psi,X}^-|_R$ is semisimple and its endomorphism ring $A$ is isomorphic to $\mathcal{M}_{2^{k_a-1}}(D)$, there exists $\mathcal{V} \in \textup{Irr}_R(\textup{Mp}(W))$ such that
$$\omega_{\psi,X}^-|_R \simeq 2^{k_a-1} \mathcal{V}.$$
Morever $\textup{End}_{R[\textup{Mp}(W)]}(\mathcal{V}) \simeq D$ and $\mathcal{V} \otimes_R \mathcal{L} \simeq 2 \omega_{\psi,X}^-$.

Note that $D$ splits over the quadratic extension $R[\sqrt{-p}]$ of $R$, therefore there exists $\mathcal{V}' \in \textup{Irr}_{R[\sqrt{-p}]}(\textup{Mp}(W))$ such that $\mathcal{V} \otimes_R R[\sqrt{-p}] \simeq 2 \mathcal{V}'$. Going to the composite $\mathcal{L}[\sqrt{-p}]$ of $R[\sqrt{-p}]$ and $\mathcal{L}$, we get that $\mathcal{V}'$ is a realisation of $\omega_{\psi,X}^- \otimes_\mathcal{L} \mathcal{L}[\sqrt{-p}]$ over $R[\sqrt{-p}]$ and the Schur index $m(\omega_{\psi,X}^-)$ must be $2$ as $\omega_{\psi,X}^-$ can't be realised over $R$. \end{proof}

\subsection{} In the modular setting $R_0 = \mathbb{F}_\ell$ with $\ell \neq 2$, we still have the decomposition into even and odd functions of the Weil representation, though these representations may fail to be irreducible. The obvious analogue of Theorem \ref{thm:character-fields-of-odd-and-even-parts} is valid \textit{i.e.}
$$R(\omega_{\psi,X}^\pm) = \left\{ \begin{array}{cl} \mathbb{F}_\ell & \textup{ if } q \in p^{2 \mathbb{N}} \\ \mathbb{F}_\ell[\sqrt{p^*}] & \textup{ if } q \in p^{2 \mathbb{N}+1} \end{array} \right.$$
where $\mathbb{F}_\ell(\sqrt{p^*}) = \mathbb{F}_\ell$ if $p^*$ is a square. Likewise, the proof of Theorem \ref{thm:descent-even-part-p-not-2} still works. However, we note the following major difference with the $R_0 = \mathbb{Q}$ case: as a consequence of Wedderburn's Theorem, the Schur index is always $1$ when $R_0 = \mathbb{F}_\ell$. In particular $\omega_{\psi,X}^-$ can be realised over its character field. Indeed, in Theorem \ref{thm:descent-odd-part-p-not-2}, the obstruction to descent comes from a norm problem which can always be solved for finite fields by surjectivity of the norm, as we can always find $\lambda \in \mathcal{L}$ such that 
$$N_{\mathcal{L}/R(\omega_{\psi,X}^-)}(\lambda) = -1.$$

\subsection{} We simply remark that most of the arguments we developed could be applied in families \textit{i.e.} for ring of integers of number fields. We point out that Theorem \ref{thm:weil-representation-descent-p-not-2} is still valid over $\mathcal{O}_\mathcal{L}[1/p]$ as $\mathcal{O}_\mathcal{L}[1/p] \to \mathcal{O}_\mathcal{K}[1/p]$ is proétale and $\omega_{\psi,X}$ can be realised over $\mathcal{O}_\mathcal{K}[1/p]$. Therefore the Weil representation can be realised over $\mathcal{O}_\mathcal{L}[1/p]$. If we invert $2$ as well, we can use the decomposition $\omega_{\psi,X} \cong \omega_{\psi,X}^+ \oplus \omega_{\psi,X}^-$ and our descent arguments still work over the localised version $\mathcal{O}[1/2p]$ of the rings of integers of the fields appearing in Theorems \ref{thm:descent-even-part-p-not-2} and \ref{thm:descent-odd-part-p-not-2}.

\section{Descent when $p = 2$}

\subsection{} Let $F$ be a $2$-adic field. As opposed to the previous $p \neq 2$ case, the restriction of $( \ , \ )_F$ to $\mathcal{O}_F^\times \times \mathcal{O}_F^\times$ is no longer trivial. For example $(-1,-1)_{\mathbb{Q}_2}=-1$. The Galois group $G = \textup{Gal}(\mathcal{K}/R_0)$ is an open subgroup of $\mathbb{Z}_2^\times \subseteq \mathcal{O}_F^\times$. We have $\mathbb{Z}_2^{\times 2} = 1 + 8 \mathbb{Z}_2$ so the cardinality of $\mathbb{Z}_2^\times / \mathbb{Z}_2^{\times 2}$ is $4$. However, unlike the case $p \neq 2$, the pro-order of $\mathbb{Z}_2^\times$ only has one single prime divisor and the torsion elements in $\mathbb{Z}_2^\times$ are simply $1$ and $-1$. Another key difference is the fact that $\mathbb{Z}_2^\times$ is not procyclic as $\mathbb{Z}_2^\times /\mathbb{Z}_2^{\times 2} \simeq \mathbb{Z}/2\mathbb{Z} \times \mathbb{Z}/2\mathbb{Z}$. However $\mathbb{Z}_2^{\times 2}$ is procyclic. There are three maximal procyclic subgroups, namely $1+(2+4)\delta + 8\mathbb{Z}_2$ and $1+ 2 \delta + 8\mathbb{Z}_2$ and $1+4 \delta+ 8 \mathbb{Z}_2$ where $\delta$ runs over $\{0,1\}$.

\subsection{} We define a descent argument on $G_{2'} = \mathbb{Z}_2^{\times 2} \cap \textup{Gal}(\mathcal{K}/R_0)$. Note that $G_{2'}$ has pro-order $2^\infty$, but we use this notation by analogy with the previous case. Let $\mathcal{L} = \mathcal{K}^{G_{2'}}$. The degree of $\mathcal{L}/R_0$ is $1$, $2$ or $4$. When $R_0 = \mathbb{Q}$, the extension $\mathcal{L}/\mathbb{Q}$ has degree $4$ and is biquadratic as $\mathcal{L} = \mathbb{Q}[\sqrt{-1},\sqrt{2}] = \mathbb{Q}[\zeta_8]$.

\begin{theo} \label{thm:weil-representation-descent-p-is-2} The Weil representation can be realised over $\mathcal{L}$. \end{theo}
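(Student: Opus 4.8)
The plan is to follow the proof of Theorem~\ref{thm:weil-representation-descent-p-not-2}: construct an $\textup{Mp}(W)$-equivariant smooth semilinear action of $G_{2'}=\textup{Gal}(\mathcal{K}/\mathcal{L})$ on the model $(\omega_{\psi,X},C_c^\infty(Y,\mathcal{K}))$ and descend it along the proétale extension $\mathcal{L}\to\mathcal{K}$ using Theorem~\ref{thm:pro-etale-descent}; the fixed space $\mathcal{V}=C_c^\infty(Y,\mathcal{K})^{G_{2'}}$ is then an $\mathcal{L}[\textup{Mp}(W)]$-module with $\mathcal{V}\otimes_\mathcal{L}\mathcal{K}\overset{\sim}{\to}C_c^\infty(Y,\mathcal{K})$ equivariantly, which is exactly a realisation of $\omega_{\psi,X}$ over $\mathcal{L}$. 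Identifying $G$ with an open subgroup of $\mathbb{Z}_2^\times\subseteq\mathcal{O}_F^\times$ as in Section~\ref{sec:weil_rep_dependance_on_psi}, we have $G_{2'}=\mathbb{Z}_2^{\times2}\cap G\subseteq 1+8\mathbb{Z}_2$.

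The difference with $p\neq 2$ is that $(\ ,\ )_F$ is not trivial on $\mathcal{O}_F^\times$, and this interferes with the construction in two ways. First, for $\sigma\in G_{2'}$ the square root $\gamma_\sigma$ of the relevant element of $\mathbb{Z}_2^{\times2}$ is only determined up to sign, and $\omega_{\psi,X}(m(\gamma\,\mathrm{id}_X),1)$ and $\omega_{\psi,X}(m(-\gamma\,\mathrm{id}_X),1)$ differ on $C_c^\infty(Y,\mathcal{K})$ (by $\omega_{\psi,X}(-\mathrm{id}_W,1)$, a nontrivial scalar on the odd part). I would fix this by letting $\gamma_\sigma$ be the \emph{unique} element of $1+4\mathbb{Z}_2$ with $(\psi^{\gamma_\sigma^2})^\sigma=\psi$: since $\{\pm1\}$ is the kernel of squaring on $\mathbb{Z}_2^\times$ and squaring restricts to a topological isomorphism $1+4\mathbb{Z}_2\overset{\sim}{\to}1+8\mathbb{Z}_2=\mathbb{Z}_2^{\times2}$, the assignment $\sigma\mapsto\gamma_\sigma$ is a well-defined continuous group homomorphism $G_{2'}\to 1+4\mathbb{Z}_2\subseteq\mathcal{O}_F^\times$. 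Second, and this is where I expect the real work, the elements $m(\gamma\,\mathrm{id}_X)$ with $\gamma\in1+4\mathbb{Z}_2$ need not lift to a subgroup of $\textup{Mp}(W)$ via $\gamma\mapsto(m(\gamma\,\mathrm{id}_X),1)$, since the metaplectic cocycle $c_X$ restricted to this split torus is a power of the Hilbert symbol of determinants, $c_X(m(\gamma\,\mathrm{id}_X),m(\gamma'\,\mathrm{id}_X))=(\gamma,\gamma')_F^{\,m^2}$. But $(\gamma,\gamma')_F=1$ whenever $\gamma,\gamma'\equiv1\pmod4$ — such a $\gamma$ is either already a square in $\mathbb{Q}_2$ or $\equiv5\pmod8$, and $(5,v)_{\mathbb{Q}_2}=1$ for all units $v$, which forces triviality over $F$ — so on $1+4\mathbb{Z}_2$ the cocycle restricts trivially and $\gamma\mapsto\widetilde m_\gamma:=(m(\gamma\,\mathrm{id}_X),1)$ is a homomorphism. (Even without this computation, $1+4\mathbb{Z}_2\cong\mathbb{Z}_2$ has cohomological dimension $1$, so the cover splits over $\{m(\gamma\,\mathrm{id}_X):\gamma\in1+4\mathbb{Z}_2\}$ and one gets a continuous homomorphic section $\gamma\mapsto\widetilde m_\gamma$, which is all that is used.)

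With this in place I set, for $\sigma\in G_{2'}$ and $f\in C_c^\infty(Y,\mathcal{K})$,
$$(r_\sigma\cdot f)(y)=\sigma\big((\omega_{\psi,X}(\widetilde m_{\gamma_\sigma})\cdot f)(y)\big).$$
Because both $\sigma\mapsto\gamma_\sigma$ and $\gamma\mapsto\widetilde m_\gamma$ are homomorphisms, $r$ is a genuine semilinear action of $G_{2'}$; its $\textup{Mp}(W)$-equivariance is verified word for word as in Theorem~\ref{thm:weil-representation-descent-p-not-2}, using Corollary~\ref{cor:identification_omega_psi_and_omega_psi_gamma2} (valid for every $\gamma\in\mathcal{O}_F^\times$, the central sign in $\widetilde m_\gamma$ cancelling under conjugation) together with the relation $\sigma\cdot(\omega_{\psi,X}(g,\lambda)f)=\omega_{\psi^\sigma,X}(g,\lambda)(\sigma\cdot f)$ of Section~\ref{sec:weil_rep_dependance_on_psi} applied to $\psi^{\gamma_\sigma^2}$. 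Smoothness holds because any $f$ is locally constant of compact support with finitely many values in a finite subextension of $\mathcal{K}$, so for $\sigma$ in a small enough open subgroup of $G_{2'}$ one has $\gamma_\sigma\in 1+8\mathbb{Z}_2\subseteq F^{\times2}$ (hence the relevant Weil factor equals $1$) and $f(\gamma_\sigma y)=f(y)$, whence $r_\sigma f=f$. Applying Theorem~\ref{thm:pro-etale-descent} then yields the $\mathcal{L}$-model $\mathcal{V}$, which inherits the $\textup{Mp}(W)$-action since $r$ commutes with it.

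The genuinely delicate point is the control of the metaplectic cocycle on the split torus $\{m(t\,\mathrm{id}_X)\}$, and the observation that restricting square roots to the subgroup $1+4\mathbb{Z}_2$ simultaneously removes the sign ambiguity and kills that cocycle; everything else is a transcription of the $p\neq2$ argument.
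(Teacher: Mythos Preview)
Your proof is correct and follows the same descent strategy as the paper. The one point of difference is in handling the metaplectic cocycle on the procyclic subgroup of $\mathbb{Z}_2^\times$ used to extract square roots: the paper allows any of the maximal procyclic subgroups (it writes $1+2\delta+8\mathbb{Z}_2$, noting $1+4\delta+8\mathbb{Z}_2$ works equally well) and then trivialises the possibly nontrivial Hilbert-symbol $2$-cocycle on the image $G'$ by an explicit cochain $\gamma_\lambda\in\{1,i\}\subset\mathcal{L}$, setting $r_\sigma f=\sigma(\omega_{\psi,X}(m_\lambda,\gamma_\lambda)f)$. You instead commit specifically to $1+4\mathbb{Z}_2$ and observe that $(\,\cdot\,,\,\cdot\,)_F$ is already trivial there --- since $1+8\mathbb{Z}_2\subset\mathbb{Q}_2^{\times2}\subset F^{\times2}$ and $(5,5)_F=1$ over any field (witness $5\cdot1^2+5\cdot2^2=5^2$) --- so $\gamma\mapsto(m_\gamma,1)$ is literally a homomorphism and no correction is needed. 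This is a mild but genuine simplification of the paper's argument; the verification of $\textup{Mp}(W)$-equivariance, smoothness, and the application of Theorem~\ref{thm:pro-etale-descent} are then word for word the same.
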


\begin{proof} The situation is rather similar to the case $p \neq 2$, but there are a few technical complications. In order to define our descent argument, we first want to be able to extract roots from $G_{2'}$ to $\mathcal{O}_F^\times$ \textit{i.e.} to define a group morphism $\sigma \in G_{2'} \mapsto \lambda \in \mathcal{O}_F^\times$ subject to the relation $\sigma = \lambda^2$. To do so, we can embed $G_{2'}$ in $1+ 2 \delta + 8\mathbb{Z}_2$ -- note that $1+4 \delta+ 8 \mathbb{Z}_2$ works equally well -- and extract roots in this subgroup of $\mathbb{Z}_2^\times \subseteq \mathcal{O}_F^\times$ \textit{i.e.} for all $\sigma \in G_{2'}$ there exists a unique $\lambda \in 1+ 2 \delta + 8\mathbb{Z}_2$ such that $\sigma = \lambda^2$. We obtain a group isomorphism $\sigma \in G_{2'} \mapsto \lambda \in \mathcal{O}_F^\times$ and we denote by $G'$ its image in $\mathcal{O}_F^\times$, which contains $G_{2'}$ as an index $2$ subgroup.

As opposed to the case $p \neq 2$, the quadratic Hilbert symbol for $\lambda, \lambda' \in \mathcal{O}_F^\times$ in the action of $M(X)$ is non-trivial \textit{i.e.} $\omega_{\psi,X}(m_\lambda,1) \omega_{\psi,X}(m_{\lambda'},1) = (\lambda,\lambda')_F \omega_{\psi,X}(m_{\lambda \lambda'},1)$. We can consider the restriction of $(-,-)_F$ to $G'$, which is a subgroup of $\mathcal{O}_F^\times$. We define an action of $G_{2'}$ on the Weil representation via
$$r_\sigma \cdot f  = \sigma(\omega_{\psi,X}(m_\lambda,\gamma_\lambda) f)$$
where $\partial \gamma$ trivialises the $2$-cocycle $(- , - )_F$ on $G'$ \textit{e.g.} we can take
\begin{itemize}
\item $\gamma_u = 1$ for $u \in G'$ when $(-,-)_F$ is trivial on $G'$;
\item $\gamma_u = 1$ for $u \in G_{2'}$ and $\gamma_u = i$ for $u \in G' \backslash G_{2'}$ otherwise.
\end{itemize}
This defines an effective descent data as in the proof of Theorem \ref{thm:weil-representation-descent-p-not-2} and we can realise the Weil representation over the fixed field of $G_{2'}$ \textit{i.e.} over $\mathcal{L}$. \end{proof}

\subsection{} We now focus on the case $R_0=\mathbb{Q}$. Let $\{1,-1,3,5\}$ be representatives of $\mathbb{Z}_2^\times / \mathbb{Z}_2^{\times 2}$ in $\mathbb{Z}_2^\times$ and write $[\alpha] = \langle \alpha, \mathbb{Z}_2^{\times 2} \rangle$ for $\alpha \in \mathbb{Z}_2^\times$. Let $\mathbb{Z}_2^{\times} \supseteq A = \mathcal{O}_F^{\times 2} \cap \mathbb{Z}_2^\times \supseteq \mathbb{Z}_2^{\times 2}$. We simply recall that $\zeta_8 = \frac{1+i}{\sqrt{2}}$ and $\mathbb{Q}[\zeta_8] = \mathbb{Q}[\sqrt{-2},\sqrt{-1}]$ contains three quadratic extensions of $\mathbb{Q}$.

\begin{theo} \label{thm:descent-even-part-p-is-2} The even part $\omega_{\psi,X}^+$ can be realised over its character field, which is \begin{itemize}
\item $\mathbb{Q}$ if $A=\mathbb{Z}_2^\times$;
\item $\mathbb{Q}[\sqrt{-2}]$ if $A=[3]$;
\item $\mathbb{Q}[\sqrt{-1}]$ if $A=[5]$;
\item $\mathbb{Q}[\sqrt{2}]$ if $A=[-1]$;
\item $\mathbb{Q}[\zeta_8]$ if $A=\mathbb{Z}_2^{\times 2}$.
\end{itemize} \end{theo}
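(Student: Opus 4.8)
The plan is to establish the two assertions in turn: first compute the character field $R(\omega^+_{\psi,X})$, then descend $\omega^+_{\psi,X}$ to it. For the character field: since $\omega^+_{\psi,X}$ is absolutely irreducible, Proposition~\ref{prop:rationality-field-equal-character-field} lets me replace its character field by its rationality field $\mathcal{K}^{H}$, where $H=\{\sigma\in G:{}^\sigma\omega^+_{\psi,X}\simeq\omega^+_{\psi,X}\}$. By Section~\ref{sec:weil_rep_dependance_on_psi} one has ${}^\sigma\omega^+_{\psi,X}\simeq\omega^+_{\psi^\sigma,X}$, and, exactly as in the proof of Theorem~\ref{thm:character-fields-of-odd-and-even-parts} (the isomorphism criterion there being a twisted Jacquet module computation insensitive to whether $p=2$), $\omega^+_{\psi^\gamma,X}\simeq\omega^+_{\psi,X}$ for $\gamma\in\mathcal{O}_F^\times$ if and only if $\gamma\in\mathcal{O}_F^{\times 2}$. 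Carrying this across the identification of $G$ with $\mathbb{Z}_2^\times\subseteq\mathcal{O}_F^\times$ recalled in Section~\ref{sec:weil_rep_dependance_on_psi}, I get $H=\mathbb{Z}_2^\times\cap\mathcal{O}_F^{\times 2}=A$, whence $R(\omega^+_{\psi,X})=\mathcal{K}^A$. As $\mathbb{Z}_2^{\times 2}\subseteq A$ and $\mathcal{K}^{\mathbb{Z}_2^{\times 2}}=\mathbb{Q}[\zeta_8]$, the field $\mathcal{K}^A$ is the subfield of $\mathbb{Q}[\zeta_8]$ fixed by the image of $A$ in $\textup{Gal}(\mathbb{Q}[\zeta_8]/\mathbb{Q})$; running over the five subgroups between $\mathbb{Z}_2^{\times 2}$ and $\mathbb{Z}_2^\times$ and computing fixed subfields — using $\zeta_8^2=\sqrt{-1}$ (fixed exactly by $[5]$), $\zeta_8+\zeta_8^{-1}=\sqrt{2}$ (fixed exactly by $[-1]$) and $\zeta_8+\zeta_8^{3}=\sqrt{-2}$ (fixed exactly by $[3]$) — produces the five lines of the table.

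For the descent, the cases $A=\mathbb{Z}_2^{\times 2}$ and the two totally real cases $A=\mathbb{Z}_2^\times$, $A=[-1]$ are easy. In the first there is nothing to do, as the character field is $\mathbb{Q}[\zeta_8]=\mathcal{L}$, over which $\omega_{\psi,X}$ and hence $\omega^+_{\psi,X}$ are already realised by Theorem~\ref{thm:weil-representation-descent-p-is-2}. In the other two, $-1\in A\subseteq\mathcal{O}_F^{\times 2}\subseteq F^{\times 2}$, so the quadratic Hilbert symbol is trivial on $A$, and since $-1\in\mathcal{O}_F^{\times 2}$, the element $m_{-1}$ acts on even functions by the scalar $\Omega^\psi_{1,(-1)^m}=1$. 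Thus both the $2$-cocycle produced by the Hilbert symbol and the sign ambiguity in the choice of square roots disappear, and the descent formula $r_\sigma\cdot f=\sigma(\omega^+_{\psi,X}(m_{\beta_\sigma},1)\cdot f)$ — with $\beta_\sigma\in\mathcal{O}_F^\times$ a square root of the element of $A$ attached to $\sigma$ — defines a smooth $\textup{Mp}(W)$-equivariant semilinear action of $A=\textup{Gal}(\mathcal{K}/\mathcal{K}^A)$ on $\omega^+_{\psi,X}$, exactly as in the proofs of Theorems~\ref{thm:weil-representation-descent-p-not-2} and~\ref{thm:descent-even-part-p-not-2}; Theorem~\ref{thm:pro-etale-descent} then realises $\omega^+_{\psi,X}$ over $\mathcal{K}^A$. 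This is precisely where the even part beats the odd part: $\omega^+_{\psi,X}(m_{-1},1)=\Omega^\psi_{1,(-1)^m}\cdot\textup{id}$ is a scalar, whereas $\omega^-_{\psi,X}(m_{-1},1)=-\Omega^\psi_{1,(-1)^m}\cdot\textup{id}$, and that sign is the origin of the quaternionic obstruction of Theorem~\ref{thm:descent-odd-part-p-not-2}.

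The remaining cases $A=[3]$ and $A=[5]$, with character field $K=\mathbb{Q}[\sqrt{-2}]$ or $\mathbb{Q}[\sqrt{-1}]$, are the substantial ones: here $-1\notin\mathcal{O}_F^{\times 2}$, so a bare-hands descent runs into both a nontrivial Hilbert cocycle and a genuine $4$-torsion element of $\textup{Mp}(W)$, and I would instead argue through the Brauer group, in the style of the proof of Theorem~\ref{thm:descent-odd-part-p-not-2}. Let $A^+=\textup{End}_{K[\textup{Mp}(W)]}(\omega^+_{\psi,X}|_K)$, the endomorphism algebra of the restriction of scalars along $K\subseteq\mathbb{Q}[\zeta_8]$ of the model furnished by Theorem~\ref{thm:weil-representation-descent-p-is-2}; by Theorem~\ref{thm:decomposition_scalar_extension} it is a central simple $K$-algebra with centre $K=R(\omega^+_{\psi,X})$, split by the quadratic extension $\mathbb{Q}[\zeta_8]/K$, hence of exponent dividing $2$, and $\omega^+_{\psi,X}$ is realisable over $K$ if and only if $A^+$ is split. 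The class of $A^+$ in $\textup{Br}(K)$ is therefore nontrivial at an even, necessarily finite set of places, $K$ being totally imaginary. At a finite place $v\nmid 2$ of $K$, $\mathbb{Q}[\zeta_8]$ is unramified over $K_v$, so the local norm equation governing the descent is solvable by local class field theory — the same step as in the odd-part argument, now with no competing obstruction — and $A^+$ splits at $v$. As $2$ is ramified in $K$ there is a unique place above $2$, and an even, finite set of places contained in that singleton is empty; hence $A^+$ is split and $m(\omega^+_{\psi,X})=1$.

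The step I expect to demand the most care is the local analysis underpinning the last paragraph: making precise the claim that, away from $2$, the descent of $\omega^+_{\psi,X}$ over the completion is obstructed only by a norm equation that the unramified extension $\mathbb{Q}[\zeta_8]_v/K_v$ solves, and then using the reciprocity law to force triviality at the place above $2$ — the very place where the odd part fails. A secondary point to be checked carefully is that the hands-on construction in the totally real cases really is multiplicative and does not covertly require a fourth root of unity (the caveat raised in the introduction), which reduces to verifying exactly that the Hilbert symbol dies on $A$ and that $\Omega^\psi_{1,(-1)^m}=1$ there.
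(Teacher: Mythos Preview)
Your computation of the character field is correct and matches the paper's implicit reasoning. The trivial case $A=\mathbb{Z}_2^{\times 2}$ and the explicit-descent cases $A=\mathbb{Z}_2^\times$, $A=[-1]$ are handled essentially as in the paper (the paper also writes down $r_{\sigma_{-1}}$, $r_{\sigma_3}$ directly and checks they give a semilinear action).

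Where you diverge is for $A=[3]$ and $A=[5]$, and here your route is both different from the paper and incomplete. The paper does \emph{not} pass through the Brauer group: it observes that the only obstruction to the naive descent $r_{\sigma_3}\cdot f=\sigma_3(\omega^+_{\psi,X}(m_{\sqrt{3}},1)f)$ squaring to the identity is the scalar $(\sqrt{3},\sqrt{3})_F=(-1,\sqrt{3})_F\in\{\pm 1\}$, and then solves the global norm equation $N_{\mathbb{Q}[\zeta_8]/\mathbb{Q}[\sqrt{-2}]}(\gamma)=(-1,\sqrt{3})_F$ by hand --- $\gamma=1$ if the symbol is $+1$, and $\gamma=\zeta_8$ (with $\sigma_3(\zeta_8)\zeta_8=\zeta_8^4=-1$) if it is $-1$. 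Inserting this $\gamma$ into $r_{\sigma_3}\cdot f=\sigma_3(\omega^+_{\psi,X}(m_{\sqrt{3}},\gamma)f)$ gives an honest order-$2$ semilinear action, hence descent to $\mathbb{Q}[\sqrt{-2}]$. This is a two-line explicit construction, and it yields an actual model, in keeping with the paper's goal of performing \emph{explicit} Galois descent.

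Your Brauer-group argument, by contrast, is not self-contained as written. The assertion that $A^+$ splits at every finite $v\nmid 2$ does not follow merely from $\mathbb{Q}[\zeta_8]_v/K_v$ being unramified: at inert places the cyclic algebra $(\mathbb{Q}[\zeta_8]/K,\sigma_3,c)$ is split only if $c$ is a local norm, i.e.\ a local unit, and you never identify $c$. In the odd-part proof you invoke, that step works precisely because the obstruction is pinned down as $-1$. To close your argument you would have to compute that the descent obstruction is $(-1,\sqrt{3})_F\in\{\pm 1\}$ --- but once you know that, $-1=N(\zeta_8)$ solves the problem globally and the local--global detour through reciprocity is unnecessary. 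So your ``substantial'' cases are in fact the easy ones; the paper disposes of them more directly than of $A=\mathbb{Z}_2^\times$.
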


\begin{proof} Let $A=\mathbb{Z}_2^\times$ and choose $\sqrt{-1}$ and $\sqrt{3}$ in $\mathcal{O}_F^\times$. For $\sigma_{-1}$ and $\sigma_3$ the Galois elements corresponding to $-1$ and $3$, we define 
$$r_{\sigma_{-1}} \cdot f = \sigma_{-1} (\omega_{\psi,X}^+(m_{\sqrt{-1}},1)f) \textup{ and } r_{\sigma_3} \cdot f = \sigma_3 (\omega_{\psi,X}^+(m_{\sqrt{3}},1)f).$$
Since the actions of $\sigma_{-1}$ and $\sigma_3$ commute, we obtain a semilinear Galois action of $\textup{Gal}(\mathbb{Q}[\zeta_8]/\mathbb{Q})$, so $\omega_{\psi,X}^+$ descends to $\mathbb{Q}$.

Let $A=[3]$ and choose $\sqrt{3}$ in $\mathcal{O}_F^\times$. Define 
$$r_{\sigma_3} \cdot f = \sigma_3 (\omega_{\psi,X}^+(m_{\sqrt{3}},\gamma)f)$$
where $\gamma \in \mathbb{Q}[\zeta_8]$ satisfies $\sigma_3(\gamma) \gamma = (\sqrt{3},\sqrt{3})_F = (-1,\sqrt{3})_F$. We can take $\gamma = 1$ when $(-1,\sqrt{3})_F=1$ and $\gamma=\zeta_8$ when $(-1,\sqrt{3})_F=-1$. We obtain a semilinear Galois action of $\textup{Gal}(\mathbb{Q}[\zeta_8]/\mathbb{Q}[\sqrt{-2}])$ as $\zeta_8^3+\zeta_8 = \sqrt{-2}$, so $\omega_{\psi,X}^+$ descends to $\mathbb{Q}[\sqrt{-2}]$, which is also the character field because $\omega_{\psi^{-1},X}^+$ is not isomorphic to $\omega_{\psi,X}^+$.

The case $A=[5]$ is similar to $A=[3]$. We omit the details of the proof.

Let $A=[-1]$ and choose $\sqrt{-1}$ in $\mathcal{O}_F^\times$. Define
$$r_{\sigma_{-1}} \cdot f = \sigma_{-1} (\omega_{\psi,X}^+(m_{\sqrt{-1}},1)f).$$
We obtain a semilinear Galois action of $\textup{Gal}(\mathbb{Q}[\zeta_8]/\mathbb{Q}[\sqrt{2}])$ as $\zeta_8^{-1}+\zeta_8 = \sqrt{2}$, so $\omega_{\psi,X}^+$ descends to $\mathbb{Q}[\sqrt{2}]$, which is also the character field as $\omega_{\psi^3,X}^+$ is not isomorphic to $\omega_{\psi,X}^+$.

Let $A=\mathbb{Z}_2^{\times 2}$. Then $\omega_{\psi^\alpha,X}^+$ for $\alpha \in \{-1,3,5\}$ is not isomorphic to $\omega_{\psi,X}^+$, so the character field is $\mathbb{Q}[\zeta_8]$, which was already a field of realisation for $\omega_{\psi,X}^+$. \end{proof}

\subsection{} Once again, the odd part requires a bit more work.

\begin{theo} \label{thm:descent-odd-part-p-is-2} The representation $\omega_{\psi,X}^-$ can be realised over  \begin{itemize}
\item $\mathbb{Q}[\sqrt{-2}]$ or $\mathbb{Q}[\sqrt{-1}]$ if $A=\mathbb{Z}_2^\times$ and its character field is $\mathbb{Q}$;
\item its character field $\mathbb{Q}[\sqrt{-2}]$ if $A=[3]$;
\item its character field $\mathbb{Q}[\sqrt{-1}]$ if $A=[5]$;
\item $\mathbb{Q}[\zeta_8]$ if $A=[-1]$ and its character field is $\mathbb{Q}[\sqrt{2}]$;
\item its character field $\mathbb{Q}[\zeta_8]$ if $A=\mathbb{Z}_2^{\times 2}$.
\end{itemize}
The Schur index is $2$ if $A = \mathbb{Z}_2^\times, [-1]$ and $1$ otherwise. \end{theo}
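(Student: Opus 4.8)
The plan is to run the argument of Theorem~\ref{thm:descent-odd-part-p-not-2} case by case in the five possibilities for $A=\mathcal{O}_F^{\times 2}\cap\mathbb{Z}_2^\times$. First I would observe that the character field of $\omega_{\psi,X}^-$ agrees, in each case, with that of $\omega_{\psi,X}^+$ computed in Theorem~\ref{thm:descent-even-part-p-is-2}: for $\gamma\in\mathcal{O}_F^\times$ one has $\omega_{\psi^\gamma,X}^{\pm}\simeq\omega_{\psi,X}^{\pm}$ precisely when $\gamma\in\mathcal{O}_F^{\times 2}$, so $H(\omega_{\psi,X}^-)=H(\omega_{\psi,X}^+)=G\cap\mathcal{O}_F^{\times 2}$ and the two fixed fields in $\mathcal{L}=\mathbb{Q}[\zeta_8]$ coincide. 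The case $A=\mathbb{Z}_2^{\times 2}$ is then immediate: the character field is $\mathcal{L}$, over which $\omega_{\psi,X}^-$ is already realised by Theorem~\ref{thm:weil-representation-descent-p-is-2}, so the Schur index is $1$.

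For the other cases write $R$ for the character field and $A'=\textup{End}_{R[\textup{Mp}(W)]}(\omega_{\psi,X}^-|_R)$. As in Theorem~\ref{thm:descent-odd-part-p-not-2} I would try to manufacture a semilinear action of $\textup{Gal}(\mathcal{L}/R)$ by operators $r_\sigma\cdot f=\sigma\bigl(\omega_{\psi,X}^-(m_\lambda,\gamma_\sigma)f\bigr)$, where $\lambda\in\mathcal{O}_F^\times$ is a chosen square root of the unit attached to $\sigma$ and the scalars $\gamma_\sigma\in\mathcal{L}^\times$ trivialise the $2$-cocycle coming from the restriction of $(\ ,\ )_F$, exactly as in Theorems~\ref{thm:weil-representation-descent-p-is-2} and~\ref{thm:descent-even-part-p-is-2}; these operators commute with $\textup{Mp}(W)$ by Corollary~\ref{cor:identification_omega_psi_and_omega_psi_gamma2}. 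Together with $\mathcal{L}$ they generate a subalgebra of $A'$ whose dimension, compared against $\textup{End}_{\mathcal{L}[\textup{Mp}(W)]}(\omega_{\psi,X}^-|_R\otimes_R\mathcal{L})\simeq\mathcal{M}_{[\mathcal{L}:R]}(\mathcal{L})$ (note $\omega_{\psi,X}^-|_R\otimes_R\mathcal{L}\simeq\bigoplus_{\sigma\in\textup{Gal}(\mathcal{L}/R)}\omega_{\psi^\sigma,X}^-$, all summands isomorphic to $\omega_{\psi,X}^-$ since $R$ is the character field), forces it to be all of $A'$; hence $A'$ is a central simple $R$-algebra. The point where the $p=2$ odd case departs from the $p\neq2$ one is that the odd part records the sign of $-\textup{id}_W$: raising $r_\sigma$ to its period produces a relation in which $\omega_{\psi,X}^-(m_{-1},\cdot)$ appears, equal to $\pm\textup{id}$ or $\pm i\cdot\textup{id}$ according to the metaplectic cocycle $c_X$ and the Weil factor $\Omega_{1,(-1)^m}^\psi$, and this is exactly the source of the obstruction.

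It remains to decide in each case whether the class of $A'$ in $\textup{Br}(R)$ is trivial. When $-1\in\mathcal{O}_F^{\times 2}$, i.e. $A=\mathbb{Z}_2^\times$ or $A=[-1]$, one has $\omega_{\psi^{-1},X}^-\simeq\omega_{\psi,X}^-$, so $(\omega_{\psi,X}^-)^\vee\simeq\omega_{\psi^{-1},X}^-$ together with $\textup{End}(V^\vee)\cong\textup{End}(V)^{\textup{op}}$ gives $A'\simeq (A')^{\textup{op}}$; thus $A'$ has order at most $2$ in the Brauer group, hence is a matrix algebra over a quaternion algebra $D/R$. For $A=[-1]$, $\mathcal{L}/R=\mathbb{Q}[\zeta_8]/\mathbb{Q}[\sqrt2]$ is a CM extension with $\textup{Gal}(\mathcal{L}/R)=\langle\tau\rangle$ generated by complex conjugation, and the argument of Lemma~\ref{lem:descent-impossible-along-mathcal-L} applies verbatim: up to a fixed positive constant $(\lambda r_\tau)^2=-N_{\mathcal{L}/R}(\lambda)\,\textup{id}$, and $N_{\mathcal{L}/R}(\lambda)$ is totally positive, so $\omega_{\psi,X}^-$ does not descend to $R$; hence $D$ is the quaternion algebra over $\mathbb{Q}[\sqrt2]$ ramified exactly at its two real places, split by the totally complex field $\mathbb{Q}[\zeta_8]$, and $m(\omega_{\psi,X}^-)=2$. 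For $A=\mathbb{Z}_2^\times$, $R=\mathbb{Q}$ and $\textup{Gal}(\mathcal{L}/\mathbb{Q})\cong(\mathbb{Z}/2\mathbb{Z})^2$ is not cyclic; here I would argue that $A'$ is a quaternion algebra over $\mathbb{Q}$ unramified away from $\{2,\infty\}$ — at $v\notin\{2,\infty\}$ the extension $\mathcal{L}_v/\mathbb{Q}_v$ is unramified, so the local descent is unobstructed — and nonsplit by the analogue of Lemma~\ref{lem:descent-impossible-along-mathcal-L}, hence $A'\cong\bigl(\tfrac{-1,-1}{\mathbb{Q}}\bigr)$, which is split by the imaginary quadratic fields $\mathbb{Q}[\sqrt{-1}]$ and $\mathbb{Q}[\sqrt{-2}]$ (a short Hilbert-symbol computation at $2$), giving the two displayed realisations and $m(\omega_{\psi,X}^-)=2$. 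Finally, for $A=[3]$, resp. $A=[5]$, the character field $R=\mathbb{Q}[\sqrt{-2}]$, resp. $\mathbb{Q}[\sqrt{-1}]$, is imaginary quadratic, and writing $\mathcal{L}=R[\sqrt2]$ one has $-1=N_{\mathcal{L}/R}(1+\sqrt2)$, resp. $-1=N_{\mathcal{L}/R}(i)$; this lets one rescale $r_\tau$ to have the correct order while keeping the Hilbert-symbol cocycle trivialised, so the descent data is effective over $R$ and $m(\omega_{\psi,X}^-)=1$.

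The main obstacle I anticipate is the non-cyclic case $A=\mathbb{Z}_2^\times$: there the descent obstruction is a genuine class in $H^2(\textup{Gal}(\mathcal{L}/\mathbb{Q}),\mathcal{L}^\times)$ rather than a single norm condition, so one must track carefully the interaction between the Hilbert-symbol $2$-cocycle on $G'$ and the $\pm\textup{id}$ contributed by $\omega_{\psi,X}^-(m_{-1},\cdot)$, and pin down the resulting quaternion algebra over $\mathbb{Q}$ by its local invariants at $2$ and $\infty$. The cyclic cases reduce cleanly to the norm computations indicated above, so I expect no serious difficulty there beyond checking the Hilbert-symbol corrections are compatible with the chosen square roots.
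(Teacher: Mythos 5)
Your strategy mirrors the paper's in outline — a case analysis on $A$, semilinear operators $r_\sigma$ built from $\omega_{\psi,X}^-(m_\lambda,\cdot)$, and a norm or Brauer obstruction — and arrives at the correct realisation fields and Schur indices. However, for $A=\mathbb{Z}_2^\times$ you take a genuinely different and somewhat heavier route than the paper. You propose to first pin down the Brauer class of $D$ over $\mathbb{Q}$ (unramified away from $\{2,\infty\}$, nonsplit, hence $D\simeq\bigl(\tfrac{-1,-1}{\mathbb{Q}}\bigr)$) and then read off the quadratic splitting fields $\mathbb{Q}[\sqrt{-1}]$, $\mathbb{Q}[\sqrt{-2}]$. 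The paper bypasses the Brauer computation at $\mathbb{Q}$ entirely: it restricts directly to the quadratic $K\in\{\mathbb{Q}[\sqrt{-1}],\mathbb{Q}[\sqrt{-2}]\}$ for which the generator $\sigma$ of $\textup{Gal}(\mathbb{Q}[\zeta_8]/K)$ satisfies $(\sqrt{\sigma},\sqrt{\sigma})_F=(-1,\sqrt{\sigma})_F=1$, so that the twisted algebra $\textup{End}_{K[\textup{Mp}(W)]}(\omega_{\psi,X}^-|_K)\simeq K'[\langle\sigma\rangle]/(\sigma^2-1)\simeq\mathcal{M}_2(K)$ is manifestly split, giving the descent to $K$ with no class-field-theoretic input. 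Your route works but requires you to (i) justify nonsplitness when $\textup{Gal}(\mathcal{L}/\mathbb{Q})$ is biquadratic rather than cyclic — this is not ``the analogue of Lemma~\ref{lem:descent-impossible-along-mathcal-L} verbatim'' but can be recovered by first noting that a descent to $\mathbb{Q}$ would descend to $\mathbb{Q}[\sqrt{2}]$ and then applying the CM argument to the cyclic extension $\mathbb{Q}[\zeta_8]/\mathbb{Q}[\sqrt{2}]$; and (ii) be careful dimensionally: $A'=\textup{End}_{\mathbb{Q}[\textup{Mp}(W)]}(\omega_{\psi,X}^-|_{\mathbb{Q}})$ has $\mathbb{Q}$-dimension $16$, so it is $\mathcal{M}_2(D)$ rather than the quaternion algebra $D$ itself. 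For $A=[\pm1],[3],[5],\mathbb{Z}_2^{\times 2}$ your reasoning (norm surjectivity for the imaginary quadratics, CM obstruction and identification of $D_{-1,-1}\otimes_{\mathbb{Q}}\mathbb{Q}[\sqrt{2}]$ for $A=[-1]$) is essentially the paper's. The one honest gap is the one you flag yourself: the non-cyclic case is only sketched, and the paper's direct split-End computation is the cleaner resolution.
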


\begin{proof} Let $A=\mathbb{Z}_2^\times$. The character field is $\mathbb{Q}$. However, the representation $\omega_{\psi,X}^-|_K$ where $K$ is either $\mathbb{Q}[\sqrt{-1}]$ or $\mathbb{Q}[\sqrt{-2}]$ satisfies $\omega_{\psi,X}^-|_K \otimes_K \mathbb{Q}[\zeta_8] \simeq 2 \omega_{\psi,X}^-$. Moreover $\textup{End}_{K[\textup{Mp}(W)]}(\omega_{\psi,X}^-|_K) \simeq K'[\langle \sigma \rangle]/(\sigma^2 - 1)$ where $\langle \sigma \rangle = \textup{Gal}(\mathbb{Q}[\zeta_8]/K)$ and define
$$r_\sigma \cdot f = \sigma(\omega_{\psi,X}^-(m_{\sqrt{\sigma}},1)f)$$
where $(\sqrt{\sigma},\sqrt{\sigma})_F = (-1,\sqrt{\sigma})_F = 1$. Note that $\textup{End}_{K[\textup{Mp}(W)]}(\omega_{\psi,X}^-|_K) \simeq \mathcal{M}_2(K)$ and therefore $\omega_{\psi,X}^-|_K$ has length $2$ and is semisimple \textit{i.e.} $\omega_{\psi,X}^-$ descends to $K$.

Let $A=[3]$ and choose $\sqrt{3}$ in $\mathcal{O}_F^\times$. Define 
$$r_{\sigma_3} \cdot f = \sigma_3 (\omega_{\psi,X}^-(m_{\sqrt{3}},\gamma)f)$$
where $\gamma \in \mathbb{Q}[\zeta_8]$ satisfies $\sigma_3(\gamma) \gamma = (\sqrt{3},\sqrt{3})_F = (-1,\sqrt{3})_F$. We can take $\gamma = 1$ when $(-1,\sqrt{3})_F=1$ and $\gamma=\zeta_8$ when $(-1,\sqrt{3})_F=-1$. We obtain a semilinear Galois action of $\textup{Gal}(\mathbb{Q}[\zeta_8]/\mathbb{Q}[\sqrt{-2}])$ as $\zeta_8^3+\zeta_8 = \sqrt{-2}$, so $\omega_{\psi,X}^-$ descends to $\mathbb{Q}[\sqrt{-2}]$, which is also the character field because $\omega_{\psi^{-1},X}^-$ is not isomorphic to $\omega_{\psi,X}^-$.

The case $A=[5]$ is similar to $A=[3]$. We omit the details of the proof.

Let $A=[-1]$ and choose $\sqrt{-1}$ in $\mathcal{O}_F^\times$. The character field is $\mathbb{Q}[\zeta^8]^A = \mathbb{Q}[\sqrt{2}]$. Define
$$r_{\sigma_{-1}} \cdot f = \sigma_{-1} (\omega_{\psi,X}^-(m_{\sqrt{-1}},1)f).$$
Then $\textup{End}_{\mathbb{Q}[\sqrt{2}][\textup{Mp}(W)]}(\omega_{\psi,X}^-|_{\mathbb{Q}[\sqrt{2}]}) \simeq \mathbb{Q}[\zeta_8]'[\langle \tau \rangle] / (\tau^2+1)$ where $\langle \tau \rangle = \textup{Gal}(\mathbb{Q}[\zeta_8]/\mathbb{Q}[\sqrt{2}])$. This is a central simple algebra over $\mathbb{Q}[\sqrt{2}]$ which is ramified at $\sqrt{2}$ and $\infty$, indeed it is the scalar extension of $\mathbb{Q}[\sqrt{-1}]'[\textup{Gal}(\mathbb{Q}[\sqrt{-1}]/\mathbb{Q})]/(\tau^2+1) = D_{-1,-1}$ to $\mathbb{Q}[\sqrt{2}]$. As a result, the Schur index of $\omega_{\psi,X}$ is $2$ and a field of realisation is $\mathbb{Q}[\zeta_8]$.

Let $A=\mathbb{Z}_2^{\times 2}$. Then $\omega_{\psi^\alpha,X}^-$ for $\alpha \in \{-1,3,5\}$ is never isomorphic to $\omega_{\psi,X}^-$, so the character field is $\mathbb{Q}[\zeta_8]$, which is already a field of realisation for $\omega_{\psi,X}^-$.

When a representation can be realised over its character field, its Schur index is $1$. The Schur index is $2$ in the remaining cases because the ring of endomorphisms of the restriction to the character field is a quaternion division algebra. \end{proof}

\subsection{} In the modular setting $R_0 = \mathbb{F}_\ell$, note that $\ell \neq p = 2$ as $\ell \neq p$, so we still have the decomposition into even and odd functions of the Weil representation, though these representations may fail to be irreducible. The obvious analogue of Theorem \ref{thm:descent-odd-part-p-is-2} is valid -- \textit{i.e.} replacing $\mathbb{Q}$ by $\mathbb{F}_\ell$ -- allowing $\mathbb{F}_\ell[\alpha] = \mathbb{F}_\ell$ if $\alpha$ already belonged to $\mathbb{F}_\ell$. As noted in the $p \neq 2$ case, Wedderburn's Theorem ensures the character field is always a field of realisation. Similarly, the analogue of Theorem \ref{thm:descent-odd-part-p-is-2} is valid, replacing $\mathbb{Q}$ by $\mathbb{F}_\ell$.

\subsection{} Once again, we remark that most of the arguments we developed could be applied in families \textit{i.e.} for ring of integers of number fields. We point out that Theorem \ref{thm:weil-representation-descent-p-is-2} is still valid over $\mathcal{O}_\mathcal{L}[1/p]$ as $\mathcal{O}_\mathcal{L}[1/2] \to \mathcal{O}_\mathcal{K}[1/2]$ is proétale and $\omega_{\psi,X}$ can be realised over $\mathcal{O}_\mathcal{K}[1/2]$. Therefore the Weil representation can be realised over $\mathcal{O}_\mathcal{L}[1/p]$. Moreover $2$ is invertible, so we can use the decomposition $\omega_{\psi,X} \cong \omega_{\psi,X}^+ \oplus \omega_{\psi,X}^-$ and our descent arguments still work over the localised version $\mathcal{O}[1/p]$ of the rings of integers of the fields appearing in Theorems \ref{thm:descent-even-part-p-is-2} and \ref{thm:descent-odd-part-p-is-2}.

\bibliographystyle{alpha}
\bibliography{lesrefer}

\end{document}